\def\Kweb#1{   %for bibliography
http:\linebreak[3]//www.\linebreak[3]math.\linebreak[3]uiuc.%
\linebreak[3]edu/\linebreak[3]{K-theory/#1/}}
\def\onto{\twoheadrightarrow}
\def\k{k} %This is the field, don't call it F any more.
\def\cdh{\mathrm{cdh}}
\def\coker{\operatorname{coker}}
\def\nil{\operatorname{nil}}
\def\oo{\otimes}
\def\Pic{\operatorname{Pic}}
\def\Proj{\operatorname{Proj}}
\def\Spec{\operatorname{Spec}}
\def\Sym{\operatorname{Sym}}
\def\tors{\operatorname{tors}}
\def\zar{\mathrm{zar}}
\def\lra{\longrightarrow}
\def\map#1{\ {\buildrel #1 \over \lra}\ }
\def\smap#1{\ {\buildrel #1 \over \to}\ }
\newcommand{\comment}[1]{}	%to comment out chunks of text
\newcommand{\bbH}{\mathbb H}
\newcommand{\Hcdh}{H_\cdh}
\def\tK{\widetilde K}
\def\tR{\widetilde R}
\newcommand{\bbP}{\mathbb{P}_k}
\def\cF{\mathcal F}
\def\cO{\mathcal O}
\def\cU{\mathcal U}
\newcommand{\C}{\mathbb{C}}
\newcommand{\Q}{\mathbb{Q}}
\newcommand{\Z}{\mathbb{Z}}
\def\frakc{\mathfrak c}
\def\frakm{\mathfrak m}
\numberwithin{equation}{section}
\theoremstyle{plain}
\newtheorem{thm}[equation]{Theorem}
\newtheorem*{thm*}{Theorem}
\newtheorem{cor}[equation]{Corollary}
\newtheorem{lem}[equation]{Lemma}
\newtheorem{prop}[equation]{Proposition}
\theoremstyle{definition}
\theoremstyle{remark}
\newtheorem{rem}[equation]{Remark}
\newtheorem{ex}[equation]{Example}
\newtheorem{trick}[equation]{Standard Trick}
\newtheorem{subremark}{Remark}[equation] %for numbering
\newtheorem{subex}[subremark]{Example} %same numbering as subremarks
\begin{document}
\bibliographystyle{plain}

\title{%Lower
$K$-theory of cones of smooth varieties}
% \\~ \\ {\rm test}}

\author{G. Corti\~nas}
\thanks{Corti\~nas' research was supported by CONICET and partially supported by
grants PICT 2006-00836, UBACyT-X051, and MTM2007-64704.}
\address{Dep. Matem\'atica, FCEyN-UBA\\ Ciudad Universitaria Pab 1\\
1428 Buenos Aires, Argentina}
%\\ and  Dep. \'Algebra\\ Fac. de Ciencias\\
%Prado de la Magdalena s/n\\ 47005 Valladolid, Spain.}
\email{gcorti@dm.uba.ar}\urladdr{http://mate.dm.uba.ar/\~{}gcorti}

\author{C. Haesemeyer}
\address{Dept.\ of Mathematics, UCLA, Los Angeles, CA 90095, USA}
\email{chh@math.ucla.edu}

\author{M.\,E. Walker}
\thanks{Haesemeyer and Walker were partially supported by NSF grants}
\address{Dept.\ of Mathematics,
University of Nebraska--Lincoln \\
Lincoln, NE 68588, USA}
\email{mwalker5@math.unl.edu}

\author{C. Weibel}
\thanks{Weibel was supported by NSA and NSF grants}
\address{Dept.\ of Mathematics, Rutgers University, New Brunswick,
NJ 08901, USA} \email{weibel@math.rutgers.edu}

\date{\today}

\begin{abstract}
Let $R$ be the homogeneous coordinate ring of a smooth projective variety $X$
over a field $\k$ of characteristic~0. We calculate the
$K$-theory of $R$ in terms
of the geometry of the projective embedding of $X$.
In particular, if $X$ is a curve then we calculate $K_0(R)$ and $K_1(R)$, and
prove that $K_{-1}(R)=\oplus H^1(C,\cO(n))$.
The formula for $K_0(R)$ involves the Zariski cohomology of twisted
K\"ahler differentials on the variety.
\end{abstract}

\maketitle

\section*{}%{Introduction}

\vspace{-28pt}

Let $R=k\oplus R_1\oplus\cdots$ be the homogeneous coordinate ring of
a smooth projective
variety $X$ over a field $k$ of characteristic~0.
In this paper we compute the lower $K$-theory ($K_i(R)$, $i{\le1}$)
in terms of the Zariski cohomology groups $H^*(X,\cO(t))$ and
$H^*(X,\Omega^*_{X}(t))$, where $\cO(1)$ is the ample line bundle of the
embedding and $\Omega^*_{X}$ denotes the K\"ahler differentials of $X$
relative to $\Q$.
We also obtain computations of the higher $K$-groups
$K_n(R)/K_n(k)$, especially for curves.
A complete calculation for the conic $xy=z^2$ is given in
Theorem \ref{thm:conic}.
These calculations have become possible thanks to the new
techniques introduced in \cite{chsw}, \cite{chw-v} and \cite{chwnk}.

Here, for example, is part of Theorem \ref{thm:main};
$R^+$ is the seminormalization of $R$.

\begin{thm*}\label{intro:thm}
Let $R$ be the homogeneous coordinate ring of a smooth $d$-dimensional
projective variety $X$ in $\bbP^N$. Then $\Pic(R)\cong(R^+/R)$ and
\begin{gather*}
K_0(R) \cong \Z\oplus \Pic(R) \oplus \bigoplus\nolimits_{i=1}^{d}
\bigoplus\nolimits_{t=1}^\infty H^{i}(X,\Omega_{X}^{i}(t)),
\quad\text{and}\\
K_{-m}(R) \cong \bigoplus\nolimits_{i=0}^{d-m}
\bigoplus\nolimits_{t=1}^\infty H^{m+i}(X,\Omega_{X}^{i}(t)),
\quad m>0.
\end{gather*}
We have $K_{-m}(R)=0$ for $m>d$, and
$K_{-d}(R)=\bigoplus_{t\ge1\mathstrut} H^d(X,\cO(t))$.

If $k$ has finite transcendence degree over $\Q$ then $K_0(R)/\Z$ and
each $K_{-m}(R)$ are finite-dimensional $k$-vector spaces.
\end{thm*}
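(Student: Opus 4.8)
The plan is to use the cdh-descent/homotopy-invariance machinery of \cite{chsw,chw-v,chwnk} to reduce the computation of $K_*(R)$ to Zariski cohomology of twisted differential forms on the blow-up of the affine cone at its vertex, and then identify those cohomology groups in terms of $X$ and $\cO(1)$. First I would set $Y=\Spec R$, the affine cone over $X$, let $\pi\colon \tilde Y\to Y$ be the blow-up at the vertex $v$, so that $\tilde Y$ is the total space of the line bundle $\cO(-1)$ over $X$ (in particular smooth), with exceptional fibre $X$. The cdh-descent square relating $Y$, $\tilde Y$, $\{v\}$ and $X$, together with the fact that $K$-theory and cyclic homology agree with their cdh-sheafifications on smooth schemes, yields a long exact sequence expressing the "singular part" $K_*(R)$ modulo $K_*(k)$ (equivalently, $K_*^{\mathrm{inf}}$, infinitesimal $K$-theory, or the fibre of $K\to KH$) in terms of $\bbH_\cdh$ of the cotangent complex, which by Corti\~nas–Haesemeyer–Weibel reduces in characteristic zero to ordinary Zariski cohomology $H^*(\tilde Y,\Omega^*)$ with supports, and finally (using the $\mathbb{G}_m$-action grading, i.e.\ the weight decomposition for the $\cO(t)$, $t\ge 1$) to the groups $\bigoplus_{t\ge1}H^i(X,\Omega_X^i(t))$ appearing in the statement. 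This is essentially the content of the main Theorem \ref{thm:main} quoted above, so for the final statement I may assume those isomorphisms.

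Granting the isomorphisms in the displayed Theorem, the finiteness statement is then a question purely about the coherent cohomology groups $H^{m+i}(X,\Omega_X^i(t))$. The key point is that when $k$ has finite transcendence degree $r$ over $\Q$, the sheaf $\Omega_X^i$ of absolute (i.e.\ over $\Q$) K\"ahler $i$-forms is \emph{not} coherent, but it is an extension built from coherent pieces: there is a finite filtration (coming from the transitivity sequence for $\Q\to k\to \cO_X$, or equivalently from writing $\Omega_{k/\Q}$ as a finite-dimensional $k$-vector space of rank $r$) whose graded pieces are $\Omega_{k/\Q}^{j}\otimes_k \Omega_{X/k}^{i-j}$, i.e.\ finitely many copies of the coherent sheaves $\Omega_{X/k}^{i-j}$. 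Twisting by $\cO(t)$ and taking cohomology, each $H^{m+i}(X,\Omega_X^i(t))$ has a finite filtration with graded pieces finite sums of $H^{m+i}(X,\Omega_{X/k}^{i-j}(t))$, each of which is a finite-dimensional $k$-vector space by Serre's coherence theorem (the $i$-forms are coherent and $X$ is projective over $k$). Hence each individual summand $H^{m+i}(X,\Omega_X^i(t))$ is finite-dimensional.

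It remains to see that only finitely many of these summands are nonzero, so that the direct sum over all $t\ge 1$ (and over the finitely many values of $i$) is again finite-dimensional. For this I would invoke Serre vanishing: since $\cO(1)$ is ample and each coherent graded piece $\Omega_{X/k}^{i-j}$ is a fixed coherent sheaf, there is $t_0$ with $H^{q}(X,\Omega_{X/k}^{i-j}(t))=0$ for all $q>0$ and all $t\ge t_0$; combined with the finite filtration this forces $H^{m+i}(X,\Omega_X^i(t))=0$ for $m+i>0$ and $t\gg 0$, and since $m>0$ we always have $m+i>0$, while in the $K_0$ case the relevant cohomological degree is $i\ge 1>0$ as well. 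Therefore all but finitely many summands vanish, and a finite direct sum of finite-dimensional $k$-vector spaces is finite-dimensional. The main obstacle, and the only genuinely non-formal input, is establishing the finite filtration of $\Omega_X^\bullet$ with coherent graded pieces in the mixed-characteristic-zero setting — i.e.\ controlling the absolute differentials via the transitivity triangle $\Omega_{k/\Q}\otimes_k\cO_X\to \Omega_{X/\Q}\to \Omega_{X/k}\to 0$ and checking it is split enough (after passing to associated graded for the evident filtration) to compute cohomology; everything after that is Serre's theorems applied termwise.
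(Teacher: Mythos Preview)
Your overall outline matches the paper's approach: reduce via cdh-descent and the blow-up square to Zariski cohomology on the line bundle $\tilde Y\to X$, then use the grading to express everything in terms of $H^*(X,\Omega_X^i(t))$. The paper carries this out through Proposition~\ref{weight1}, Theorem~\ref{thm:hypercohom}, and Lemmas~\ref{bundle}--\ref{HzarY}; your sketch suppresses the two genuinely non-formal ingredients there, namely the SBI-splitting argument (Lemma~\ref{SBI-FHC}) that makes the cdh de~Rham rows exact, and the explicit de~Rham differential computation (Lemma~\ref{HzarY}) that identifies the cokernels with $\bigoplus_t H^m(X,\Omega_X^i(t))$. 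Since you explicitly defer to Theorem~\ref{thm:main} for these, that is acceptable, but be aware that ``the $\mathbb{G}_m$-action grading'' alone does not produce the clean answer without those splittings.

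For the finiteness assertion, your argument is correct in substance and is exactly the paper's: Serre's Theorem~B applied to coherent sheaves twisted by the ample $\cO(1)$. However, you have one factual misstatement. You write that $\Omega_X^i$ is \emph{not} coherent; in fact your own filtration argument shows that it \emph{is}. In characteristic~0 one has $\dim_k\Omega^1_{k/\Q}=\mathrm{tr.deg}(k/\Q)<\infty$, so each graded piece $\Omega_{k}^{j}\otimes_k\Omega_{X/k}^{i-j}$ is a finite direct sum of coherent sheaves, and an iterated extension of coherent sheaves on a Noetherian scheme is coherent. The paper simply asserts ``each $\Omega^i_X$ is a coherent sheaf'' and invokes \cite[III.5.2]{Hart} directly; your filtration is the justification of that one-line claim, not a workaround for its failure. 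So drop the ``is not coherent'' clause and the argument is both correct and aligned with the paper's.
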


For example, if $X=\Proj(R)$ is a smooth curve over $k$ which is
definable over a number field contained in $k$,
we show that
$\Omega^1_{k}\oo\cO(t)\to\Omega^1_X(t)$ induces:
\begin{equation}\label{intro:K0(R)}
K_0(R)=\Z\oplus\Pic(R)\oplus\left(\Omega^1_{k}\oo K_{-1}(R)\right),\quad
K_{-1}(R)\cong\oplus_{t=1}^\infty H^1(X,\cO_X(t)).
\end{equation}
We also have $K_n^{(n+2)}(R)\cong \Omega^{n+1}_k\oo K_{-1}(R)$
for all $n\ge1$. (See Proposition \ref{Kunneth-Kni}(d).)

When $R$ is normal, \eqref{intro:K0(R)} implies that
$K_0(R)=\Z$ holds if and only if either
(a) $k$ is algebraic over $\Q$, or (b) $K_{-1}(R)=0$.
Case (a) was discovered by Krishna and Srinivas \cite[1.2]{KSr},
while parts of case (b) were discovered in \cite{WeibelNorm}.
By Riemann-Roch, the vanishing of $K_{-1}(R)$ is equivalent to the
vanishing of the vector spaces $H^0(X,\Omega^1_{X/k}(-t))$ for $t>0$,
which is a delicate arithmetic question (unless, for example,
the embedding has degree $d\ge2g-2$).
Note that case (b) clarifies Srinivas' theorem in \cite{Sr1} that
when $k=\C$ and $H^1(X,\cO(1))\ne0$ we have $K_0(R)\ne\Z$.

Still assuming that $X$ is a curve, suppose in addition that
$k$ is a number field; then $\Omega^1_k=0$ and hence
$K_0(R)=\Z\oplus(R^+/R)$. We also establish (in \ref{number-curve} and
\ref{K12}) the previously unknown calculations that
\begin{gather}\label{intro:K1(R)}
K_1(R)=k^\times\! \oplus \left[\bigoplus_{t=1}^\infty
H^0(X,\Omega^1_{X/k}(t))\right]/\Omega^1_{R/k},
\quad
K_2(R) = K_2(k) \oplus \tors\Omega^1_{R/k}, \\ \label{intro:Kn(R)}
K_n(R) = K_n(k) \oplus HC_{n-1}(R)/HC_{n-1}(k), \qquad n\ge3.
\end{gather}
The $K_1$ formula \eqref{intro:K1(R)} is a clarification of a result of
Srinivas \cite{SrCone}. When $k$ is not algebraic over $\Q$, formulas
\eqref{intro:K0(R)}, \eqref{intro:K1(R)} and \eqref{intro:Kn(R)}
need to be altered to involve the arithmetic Gauss-Manin connection;
see Proposition \ref{K02} and Example \ref{ex:partial_vanish}.

For any smooth $d$-dimensional variety $X$, $K_0(R)/\Z$ is the direct sum of
the eigenspaces $K_0^{(i)}(R)$ of the Adams operation, $1\le i\le d+1=\dim R$,
and we give a formula for these eigenspaces.
For example, the top eigenspace, $K_0^{(d+1)}(R)$, may be identified
with the Chow group of smooth zero-cycles in $\Spec(R)$;  we show that
\[ K_0^{(d+1)}(R) \cong
\bigoplus\nolimits_{t=1}^\infty H^{d}(X,\Omega^{d}_{X}(t)). \]

As pointed out in \cite{KSr}, the normal domain $R_k=k[x,y,z]/(x^n+y^n+z^n)$
has $K_0(R_{\Q})=\Z$ but if $n\ge4$ then $H^1(X,\cO(1))$ is nonzero while
$K_0(R_{\C})/\Z$ is a very big $\C$-vector space;
by \eqref{intro:K0(R)}, it is the direct sum of the
$\Omega^1_{\C}\oo H^1(X,\cO(t))$, $t\ge1$.

We also obtain reasonably nice formulas for the eigenspaces
$K_n^{(i)}(R)$ when $n>0$ and $i\ge n$; see Theorem \ref{Kni}.
To illustrate the range of our cohomological results,
consider $K_1(R)$ when $X$ is a smooth curve and $R$ is normal; we have
$K_1(R)= k^\times \oplus K_1^{(2)}(R) \oplus K_1^{(3)}(R)$, where
\begin{gather}
K_1^{(2)}(R)\cong
   \left(\bigoplus\nolimits_{t=1}^\infty
   H^0(X,\Omega^1_{X}(t)\right)/\Omega^1_{R},
\qquad\text{and}\nonumber \\
K_1^{(3)}(R)=\bigoplus_{t=1}^\infty \coker\bigl\{
\Omega^1_{\k}\oo H^0(X,\Omega_{X/k}^1(t))\map{\nabla}
 \Omega^2_{\k}\oo H^1(X,\cO_X(t)) \bigr\}.\label{intro:partial}
\end{gather}
The map $\nabla$ in \eqref{intro:partial} is a twisted
Gauss-Manin connection (see Lemma \ref{lem:FundSeq}).
In Section \ref{sec:curvecones}, we prove that if $n\ge1$ then
$K_n^{(n+1)}(R)$ contains $\Omega^{n-1}_\k\otimes_\Q k^{d+g-1}$
as a direct summand provided that either
\begin{enumerate}
\item[(a)] $X$ has genus $g$ and is embedded in $\bbP^N$ by
a complete linear system of degree $d$, with $d\ge 2g-1$, or

\item[(b)] $X$ is induced by base change to $k$ from a curve defined
over a number field contained in $k$.
\end{enumerate}
(See Theorem \ref{nonvanish} and Example \ref{ex:partial=0}.)
In particular $K_1^{(2)}(R)\ne 0$, and in
general, $K_n^{(n+1)}(R)\ne 0$ if $n-1\le \mathrm{tr}.\deg(\k/\Q)$.
Observe that the case $n=1$ improves the result of Srinivas in
\cite[\S1]{SrCone}
that there is a surjection from $\tK_1(R) = K_1(R)/K_1(k)$ to
$H^0(X,\Omega^1_{X/k}(1))$ and hence that $\tK_1(R)\ne 0$ if $d\ge 2g+1$.

Finally, in Theorem \ref{thm:conic} we give a complete calculation of
the $K$-theory of the homogeneous coordinate ring of the plane conic,
$R=k[x,y,z]/(xy-z^2)$.

This paper is organized as follows.
In Section \ref{sec:standard}, we reduce the calculation of $K_n(R)$
to a $cdh$-cohomology computation and knowledge of $HC_{n-1}(R)$.
This relies on the basic
observation that cones are $\mathbb{A}^1$-contractible, so that the
reduced $K$-theory $\tK_n(R) = K_n(R)/K_n(k)$ can be calculated in
terms of $NK_n(R)$, making our previous calculations (see \cite{chsw},
\cite{chw-v}, \cite{chwnk}) applicable.  Several of the formulas we
obtain are valid for general graded algebras of the form $R=k\oplus
R_1\oplus\cdots$. We also specialize these formulas to the case when
$\dim R=2$, and obtain an expression for $K_n(R)$ in terms of $cdh$
cohomology and cyclic homology $(n\ge 1)$.

In Section \ref{sec:cones}
we compute the $cdh$ terms in the formulas of the previous sections
for the case when $R$ is the affine cone of a smooth variety.
In Section \ref{sec:curvecones}, we return to the case when the graded
coordinate ring has dimension $2$, that is, we investigate cones over
smooth projective curves.  Finally, in Section \ref{sec:conic} we apply
the techniques of this paper to completely determine the $K$-theory of
$R=k[x,y,z]/(xy-z^2)$.
%\medskip

{\it Notations:}
Throughout this paper we consider (commutative, unital) algebras over
a fixed ground field $k$, which we assume has characteristic zero.
Undecorated tensor products $\oo$ and differential forms
$\Omega^*$ are taken over $\Q$; we write $\oo_k$ and $\Omega^*_{/k}$
for tensor product and forms relative to $k$. Similarly, cyclic
homology is always taken over $\Q$.
If $F$ is a functor defined on schemes over $k$, we will write $F(R)$
for $F(\Spec(R))$. If $R$ is an augmented $k$-algebra (for example, the
homogeneous coordinate ring of a variety), and $F$ is a functor from
rings to some abelian category, then we write $\widetilde{F}(R)$ for the
(split) quotient $F(R)/F(k)$.
%An exception to this convention is that
%$\tR$ denotes the normalization of $R_{red}$, not $R/k$.

%\medskip

\section{$K$-theory of graded algebras}\label{sec:standard}

Throughout this section, we
let $R = R_0\oplus R_1\oplus\cdots$ be a finitely generated
graded algebra over a field $k$ of characteristic~0 such that $R_0$
is a local, artinian $k$-algebra whose residue field is isomorphic to
$k$ as a $k$-algebra.
These conditions ensure that the map $K_n(R) \to K_n(k)$ induced
by the composition of $R \onto R_0 \onto k$ is a split surjection. For
example, $R_0$ might be $k$ itself, and indeed for most of the
calculations in this paper, one may as well assume $R_0 = k$.
Let $\frakm_R$ denote the unique graded maximal ideal of $R$; that is,
$\frakm_R$ is the kernel of the split surjection $R \onto k$.

We let $R_{red}$ denote the reduced ring associated to $R$. It is a
graded ring whose degree $0$ piece is the field $k$.  We let $\tR$
denote the normalization of $R_{red}$ (i.e., the integral closure of
$R_{red}$ in its ring of total quotients).  It is well known that
$\tR = \tR_0 \oplus \tR_1 \oplus \cdots$ is graded, that $\tR_0$ is a
product of fields, and that $\Pic(\tR)=0$.

We let $R^+$ denote the semi-normalization of $R_{red}$, that is, the
maximal extension of $R_{red}$ inside its total quotient ring $Q$ such
that for all $x\in Q$, $x^2, x^3\in R^+$ implies $x\in R^+$;
see \cite{Swan}. Alternatively,
$\Spec(R^+) \to \Spec(R_{red})$ is a universal homeomorphism.

We are interested in computing the kernel $\tK_n(R)$
of the split surjection $K_n(R)\to K_n(k)$, for $n=1,0,-1,\dots,1-d$.
(By \cite{chsw}, $K_n(R)=NK_n(R)=0$ for $n\le-d$.)
In general, for any graded ring $R=R_0\oplus R_1\oplus R_2\oplus\dots$,
the groups $\tK_n(R)$ are known to be $R_0$-modules (see \cite{W81}),
and hence (since $R_0$ contains $\Q$) they are
uniquely divisible as abelian groups.  Thus there is a decomposition
$\tK_n(R)\cong \bigoplus_i\tK_n^{(i)}(R)$ according to the
eigenvalues $k^i$ of the Adams operations $\psi^k$.

\begin{rem}\label{rem:R+}
Suppose that the punctured spectrum,
$\Spec(R_{red}) \setminus \{\frakm_R\}$, is non-singular.
Then the conductor $\frakc$ to the normalization $\tR$ of $R_{red}$
is $\frakm_R$-primary. 
An easy calculation shows that the seminormalization of $R_{red}$ is
$$
R^+= k \oplus\tR_1\oplus\tR_2\oplus\cdots,
$$
with $\tR/R^+=\tR_0/k$ and $R^+/R_{red}=\tR/(\tR_0+R_{red})$.
Then $K_n^{(i)}(R)\cong\tK_n^{(i)}(R)$ for $n\le1$,
with two exceptions:
$\tK_0^{(0)}(R)=0$, and $\tK_1^{(1)}(R)\cong \nil(R)/\nil(R_0)$.
The problem of computing $\tR/R_{red}$ (and hence $R^+/R_{red}$) is hard.
\end{rem}

The main results of this section, Theorems \ref{thm:A} and \ref{Kni},
are formulated in terms of the $cdh$ cohomology groups
$\Hcdh^*(R,\Omega^i)$ introduced in \cite{chsw} and \cite{chw-v},
where the K\"ahler differentials, $\Omega^i = \Omega^i_{-/\Q}$, are taken
relative to the base field $\Q$.
By \cite[2.5]{chwnk}, we have that $\Hcdh^0(R,\cO)=R^+$.
For simplicity, we write $\Hcdh^{m}(R,\Omega^{i})/d\Hcdh^{m}(R,\Omega^{i-1})$
for the cokernel of the map
$d:\Hcdh^{m}(R,\Omega^{i-1})\to\Hcdh^m(R,\Omega^i)$
induced by the K\"ahler differential.
Theorem \ref{thm:A} will follow from Proposition \ref{weight1}
and Theorem \ref{thm:hypercohom} below.

\begin{thm}\label{thm:A}
Let $R=R_0\oplus R_1\oplus\cdots$ be a finitely generated 
graded algebra over a field $k$ of characteristic~0. Assume $R_0$ is local
artinian with residue field $k$.
Then the Adams operations induce an eigenspace decomposition:
\[
K_0(R) = \Z \oplus R^+/R_{red} \oplus
\bigoplus_{i=1}^{\dim\,R-1} H_\cdh^i(R,\Omega^i)/d\,H_\cdh^i(R,\Omega^{i-1}).
\]
The negative $K$-groups are given by
\[ K_{-m}(R) = \Hcdh^m(R,\cO) \oplus \bigoplus_{i=1}^{\dim\,R-m-1}
H_\cdh^{m+i}(R,\Omega^i)/d\,H_\cdh^{m+i}(R,\Omega^{i-1}).
\]
for $m>0$.
Here, $K_0^{(0)}(R) = \Z, K_0^{(1)}(R) = R^+/R_{red},
K_{-m}^{(1)}(R) = \Hcdh^m(R,\cO)$ and
the groups indexed by $i$ are $K_0^{(i+1)}$ and
$K_{-m}^{(i+1)}(R)$, respectively.
\end{thm}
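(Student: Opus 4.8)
The plan is to derive Theorem \ref{thm:A} from two inputs promised in the text: Proposition \ref{weight1}, which should handle the weight-one eigenspaces $K_n^{(1)}(R)$, and Theorem \ref{thm:hypercohom}, which should handle the higher weights. First I would set up the basic reduction: since $R$ is graded with $R_0$ local artinian with residue field $k$, the projection $R\onto k$ splits $K_n(R)\to K_n(k)$, so $K_n(R)=K_n(k)\oplus\tK_n(R)$, and for $n\le 0$ we have $K_n(k)=0$ for $n<0$ and $K_0(k)=\Z$. Thus $K_0(R)=\Z\oplus\tK_0(R)$ and $K_{-m}(R)=\tK_{-m}(R)$ for $m>0$, and it suffices to compute $\tK_n(R)$ for $n\le 0$. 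Because $R_0\supseteq\Q$, the groups $\tK_n(R)$ are $R_0$-modules, hence $\Q$-vector spaces, so the Adams operations $\psi^k$ act and give the eigenspace decomposition $\tK_n(R)=\bigoplus_i\tK_n^{(i)}(R)$; I would note that the $i$th eigenspace vanishes outside the range $1\le i\le \dim R$ (this is the standard weight bound for a $\dim R$-dimensional singular affine variety, and for $n\le 0$ the top relevant weight is $\dim R - 1 + \max(n,0)$ shifted appropriately — in any case the finite range in the statement comes from \cite{chsw}'s vanishing $K_n(R)=NK_n(R)=0$ for $n\le -d$ together with the weight truncation).

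Next, I would invoke Proposition \ref{weight1} to identify the weight-one pieces. For $n=0$ this should give $\tK_0^{(1)}(R)\cong\Hcdh^0(R,\cO)/R_{red}$, and since \cite[2.5]{chwnk} tells us $\Hcdh^0(R,\cO)=R^+$, this is $R^+/R_{red}$; and $\tK_0^{(0)}(R)=0$ while the $\Z$ summand is $K_0^{(0)}(k)$. For $n=-m<0$ the weight-one piece should be $\tK_{-m}^{(1)}(R)\cong\Hcdh^m(R,\cO)$. Then I would invoke Theorem \ref{thm:hypercohom} for the higher-weight eigenspaces: it should identify $\tK_{-m}^{(i+1)}(R)$ for $i\ge 1$ with $\Hcdh^{m+i}(R,\Omega^i)/d\,\Hcdh^{m+i}(R,\Omega^{i-1})$, i.e.\ with the cokernel of the de Rham differential $d:\Hcdh^{m+i}(R,\Omega^{i-1})\to\Hcdh^{m+i}(R,\Omega^i)$ (here $m\ge 0$, so $m=0$ recovers the $K_0$ case). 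The mechanism behind Theorem \ref{thm:hypercohom} should be the infinitesimal/cdh fibre sequence relating $K$-theory to cyclic (or topological cyclic/negative cyclic) homology and its cdh-analogue — the point being that for a cone the reduced theory is governed by $cdh$-differential forms via the Chern character, and the weight-$(i+1)$ part of $K_{-m}$ picks out a single cohomological degree of the truncated de Rham complex, which collapses to the stated cokernel. Assembling the weight-one contribution from Proposition \ref{weight1} with the higher-weight contributions from Theorem \ref{thm:hypercohom}, summing over $i$ from $1$ to $\dim R - m - 1$ (the upper bound forced by $\Hcdh^{m+i}(R,-)=0$ once $m+i\ge\dim R$, as $\Spec R$ has dimension $\dim R$), and adding $\Z$ in the $K_0$ case, yields exactly the displayed formulas, with the indexing $K_0^{(i+1)}$ and $K_{-m}^{(i+1)}$ as claimed.

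The main obstacle is entirely packaged into Theorem \ref{thm:hypercohom} and Proposition \ref{weight1}, which this excerpt defers; granting those, the present theorem is a bookkeeping exercise in splitting off $K_*(k)$, organizing by Adams weight, and matching degree ranges. So the only real content I would need to be careful about here is (i) checking that the weight ranges in Proposition \ref{weight1} and Theorem \ref{thm:hypercohom} are complementary and together cover all of $\tK_n(R)$ for $n\le 0$ with no overlap — in particular that weight $1$ is genuinely not reached by the higher-weight statement — and (ii) confirming the two exceptional identifications $K_0^{(0)}(R)=\Z$ and the absence of a weight-$0$ contribution in negative degrees, which follow since $K_*(k)$ lives entirely in weight $0$ in degree $0$ and is zero in negative degrees. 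I would also remark that the vanishing $K_{-m}(R)=0$ for $m\ge\dim R$ and the top-degree formula $K_{-d}(R)=\Hcdh^d(R,\cO)$ fall out immediately, since for $m=d$ the index range $1\le i\le d-m-1$ is empty, leaving only the weight-one term $\Hcdh^d(R,\cO)$, and for $m>d$ even that vanishes by the cohomological dimension bound.
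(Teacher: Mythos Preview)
Your approach matches the paper's: the proof is exactly to plug Theorem \ref{thm:hypercohom} into Proposition \ref{weight1}, and the paper's own proof is the one-liner you anticipate. One clarification on the division of labor you guessed at: Proposition \ref{weight1} is not weight-one specific but rather identifies $\tK_n^{(i)}(R)\cong\bbH_\cdh^{2i-n-2}(R,\Omega^{<i})$ for \emph{all} $i\ge1$ and $n\le0$ (with the single exception $(n,i)=(0,1)$, where it gives $R^+/R_{red}$ directly), while Theorem \ref{thm:hypercohom} is a pure cdh-hypercohomology computation showing $\bbH_\cdh^{q+i}(R,\Omega^{\le i})\cong\coker\bigl(H_\cdh^q(R,\Omega^{i-1})\smap{d}H_\cdh^q(R,\Omega^i)\bigr)$ for $q\ge0$ and vanishing for $q\ge\dim R$; substituting the second into the first (with $\Omega^{<i}=\Omega^{\le i-1}$ and $q=i-1-n$) yields Theorem \ref{thm:A} immediately.
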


By \cite[1.2 and 2.3]{WKH} we have $KH_*(R) \cong KH_*(R_0) \cong
K_*(k)$, and
thus by \cite[1.6]{chw-v}, we have
\begin{equation}\label{eq:tKq}
\tK_n(R) \cong \pi_n \cF_K(R) \cong \pi_{n-1}\cF_{HC}(R)
\quad \text{ for all $n$.}
\end{equation}
Here, $\cF_{HC}(R)=\cF_{HC}(R/\Q)$ is the homotopy fiber of
$HC(R)\to\bbH_\cdh(R,HC)$, with cyclic homology taken relative to
the subfield $\Q$ of $\k$, so that there is a long exact sequence
\begin{equation*}
\cdots \to HC_n(R) \to \bbH_\cdh^{-n}(R,HC)\to \tK_n(R)
	\to HC_{n-1}(R) \to \cdots.
\end{equation*}
These groups all have $\lambda$-decompositions and the maps in this
sequence are compatible with these decompositions (see \cite{chw-chern}),
but there is a weight shift in that
$\tK_n^{(i)}(R)$ maps to $HC^{(i-1)}_{n-1}(R)$.
We have $\tK_n^{(0)}(R)=0$ for all $n$ because $\cF_{HC}^{(-1)}(R)
\simeq0$.
Moreover, by \cite[2.2]{chw-v} we have
$\bbH_\cdh^{m}(R,HC^{(i)})\cong \bbH_\cdh^{2i+m}(R,\Omega^{\le i})$,
so the long exact sequence becomes
\begin{equation}\label{seq:key}
\cdots HC_n^{(i-1)}(R) \to \bbH_\cdh^{2i-n-2}(R,\Omega^{< i})
\to \tK_n^{(i)}(R)	\to HC_{n-1}^{(i-1)}(R) \cdots.
\end{equation}

The general picture is given by the following proposition.

\begin{prop}\label{weight1}
Let $R=R_0\oplus R_1 \oplus\cdots$ be as in Theorem \ref{thm:A}.
Then $\tK_n^{(0)}(R)=0$ for all $n$. For $n\le0$,
or for $n>0$ and $i\ge n+2$, we have
\[ 
\tK_n^{(i)}(R) \cong \bbH_\cdh^{2i-n-2}(R,\Omega^{<i}),
\quad \text{except for\quad}(n,i)=(0,1),
\]
In the exceptional case, $\tK_0^{(1)}(R)=\Pic(R) = R^+/R_{red}$.
\end{prop}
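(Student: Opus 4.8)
The plan is to read off everything from the long exact sequence \eqref{seq:key} together with the fact that the comparison map $HC_n(R) \to \bbH_\cdh^{-n}(R,HC)$ is controlled by vanishing ranges in cyclic homology. First I would recall that the weight-$j$ part $HC_n^{(j)}(R)$ vanishes outside the range $j \le n \le 2j$ (for $n \ge 0$; and it vanishes entirely for $n<0$ since $HC$ is connective), because $HC_n^{(j)}$ is computed by a complex concentrated in those degrees. Applying this with $j = i-1$: the term $HC_n^{(i-1)}(R)$ on the left of \eqref{seq:key} and the term $HC_{n-1}^{(i-1)}(R)$ on the right both vanish as soon as $n-1 > 2(i-1)$ is false in the relevant direction — more precisely, $HC_{n-1}^{(i-1)}(R)=0$ whenever $n-1 < i-1$, i.e. $n < i$, and $HC_n^{(i-1)}(R)=0$ whenever $n>2(i-1)$ or $n<i-1$. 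So under the hypothesis $i \ge n+2$ (equivalently $n \le i-2 < i-1$), both flanking $HC$ groups vanish and \eqref{seq:key} collapses to the asserted isomorphism $\tK_n^{(i)}(R) \cong \bbH_\cdh^{2i-n-2}(R,\Omega^{<i})$. For $n \le 0$ the same argument applies because then $n-1 < 0 \le i-1$ for all $i \ge 1$ (and for $i \le 0$ the weight-$(i-1)<0$ pieces all vanish, as already noted via $\cF_{HC}^{(-1)}\simeq 0$), so again the two $HC$ terms are zero — with the sole exception $(n,i)=(0,1)$, where $HC_{-1}^{(0)}=0$ but $HC_0^{(0)}(R) = R$ need not be killed: here \eqref{seq:key} reads $HC_0^{(0)}(R) \to \bbH_\cdh^0(R,\Omega^{<1}) \to \tK_0^{(1)}(R) \to 0$, i.e. $R \to \Hcdh^0(R,\cO) \to \tK_0^{(1)}(R) \to 0$.

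Next I would identify this cokernel. By \cite[2.5]{chwnk} (quoted in the excerpt) we have $\Hcdh^0(R,\cO) = R^+$, and the map from $HC_0^{(0)}(R)=R$ is the canonical inclusion $R \to R^+$, which factors through $R \onto R_{red} \into R^+$. Hence the cokernel is $R^+/R_{red}$, giving $\tK_0^{(1)}(R) \cong R^+/R_{red}$. That this also equals $\Pic(R)$ follows from the standard identification of $\Pic$ of a (reduced) ring with the seminormalization quotient — concretely, from the exact sequence $0 \to R^\times \to (R^+)^\times \to \Pic(R) \to \Pic(R^+)$ together with $\Pic(R^+)=0$ (the seminormalization of a graded ring with artinian local degree-zero part has trivial Picard group, e.g. by the argument in \cite{WeibelNorm} or via $\tR$, since $\Pic(\tR)=0$), after checking that $(R^+)^\times/R^\times \cong R^+/R_{red}$ because the graded structure forces units to sit in degree zero and the degree-zero parts agree. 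The statement $\tK_n^{(0)}(R)=0$ is immediate from $\cF_{HC}^{(-1)}(R)\simeq 0$ and \eqref{eq:tKq}, as recorded just before \eqref{seq:key}.

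I expect the only real point requiring care — the ``hard part'' — is the bookkeeping of the weight shift and the two-sided vanishing range for $HC^{(j)}_n$: one must be careful that the relevant bound is $HC^{(j)}_n(R)=0$ for $n<j$ (which kills the right-hand term when $n-1 < i-1$) and that the left-hand term $HC^{(i-1)}_n(R)$ is also zero under $i\ge n+2$ because then $n < i-1$ as well. Once these vanishings are in place the isomorphism is automatic from exactness of \eqref{seq:key}, and the exceptional case $(n,i)=(0,1)$ is handled by the explicit cokernel computation above. Everything else is a direct appeal to the results already set up in the excerpt, namely \eqref{eq:tKq}, \cite[2.2]{chw-v} (which produced \eqref{seq:key}), and \cite[2.5]{chwnk}.
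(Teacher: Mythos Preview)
Your argument is correct and essentially identical to the paper's: both collapse the sequence \eqref{seq:key} using only the vanishing $HC_n^{(j)}(R)=0$ for $j>n\ge0$ together with $HC_n=0$ for $n<0$ and $HC_0^{(0)}(R)=R$, then identify the cokernel at $(n,i)=(0,1)$ via $\Hcdh^0(R,\cO)=R^+$. Two small caveats: the lower bound $n\le 2j$ you assert for $HC_n^{(j)}(R)$ is false for singular $R$ (e.g.\ $HC_3^{(1)}(R)\cong D_2(R/\Q)$ can be nonzero), though you never actually use it; and your units-sequence justification of $\Pic(R)=R^+/R_{red}$ is shaky---the identification $\tK_0^{(1)}(R)=\Pic(R)$ is just the standard Adams decomposition of $K_0$, after which your cokernel computation already gives $\Pic(R)=R^+/R_{red}$.
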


\begin{proof}
The group $HC_n(R)$ vanishes for $n < 0$ and is $R$ for $n=0$.
Similarly, $HC_n^{(i)}(R)$ vanishes for $i>n>0$.
The  proposition now follows from \eqref{seq:key} and the fact that
$H^0_{cdh}(R,\cO) = R^+$ by \cite[2.5]{chwnk}.
\end{proof}

To go further, it is useful to invoke the following trick,
using the standard $\mathbb{A}^1$-contraction of a cone to its vertex.

\begin{trick}\label{trick}
If $R$ is a positively graded algebra, there is an algebra map
$\nu:R\to R[t]$ sending $r\in R_n$ to $rt^n$. If $F$ is a functor on
algebras, then the composition of $\nu$ with evaluation at $t=0$
factors as $R\to R_0\to R$, so $F(R)\map{\nu} F(R[t])\map{t=0} F(R)$
is zero on the kernel $\widetilde{F}(R)$ of $F(R)\to F(R_0)$. Similarly,
the composition of $\nu$ with evaluation at $t=1$ is the identity. That is,
$\nu$ maps $\widetilde{F}(R)$ isomorphically onto a summand of $NF(R)$,
and $\widetilde{F}(R)$ is in the image of the map $(t=1):NF(R)\to F(R)$.
\end{trick}
%\smallskip

The following technical result is crucial for our calculations; it
asserts that many SBI sequences 
decompose into split short exact sequences. We write $\cF_{HH}$ and
$\cF_{HC}$ for the homotopy fibers of $HH(R)\to\bbH_\cdh(R,HH)$ and
$HC(R)\to\bbH_\cdh(R,HC)$, respectively.
Then we have distinguished cohomological triangles
\begin{gather*}
\cF_{HC}[-1] \smap{S}  \cF_{HC}[1]   \smap{B} \cF_{HH}     \smap{I}\cF_{HC},
\\ \bbH_\cdh(R,HC)[-1] 
\smap{S}\bbH_\cdh(R,HC)[1] \smap{B} \bbH_\cdh(R,HH)\smap{I}\bbH_\cdh(R,HC).
\end{gather*}

\begin{lem}\label{SBI-FHC}
If $R=R_0\oplus R_1\oplus\cdots$ is a graded algebra then for each $m$ the map
$\pi_m\cF_{HC}(R)\map{S}\pi_{m-2}\cF_{HC}(R)$ is zero, and there
is a split short exact sequence:
\[
0 \to \pi_{m-1}\cF_{HC}(R) \map{B} \pi_m\cF_{HH}(R)
	\map{I} \pi_m\cF_{HC}(R) \to 0.
\]
Similarly, there are split short exact sequences:
\[
0\to \widetilde \bbH_\cdh^{m+1}(R,HC) \map{B}
 	\widetilde \bbH_\cdh^m(R,HH)\map{I} \widetilde \bbH_\cdh^m(R,HC) \to 0.
\]
and
\begin{equation*}
0\to \widetilde{\bbH}_\cdh^{n-1}(R,\Omega^{<i}) \map{B}
\widetilde{H}_\cdh^{n-i}(R,\Omega^i) \map{I}
\widetilde{\bbH}_\cdh^n(R,\Omega^{\le i}) \to 0.
\end{equation*}
\end{lem}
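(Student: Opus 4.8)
\emph{The plan.} I would deduce all three displays from the long exact sequences of the two $SBI$-triangles stated just above, the point being that the assertions amount exactly to saying that the arrows labelled $S$ in those sequences vanish. First I would write out, for the first triangle, the long exact sequence
\[
\cdots\map{S}\pi_{m-1}\cF_{HC}(R)\map{B}\pi_m\cF_{HH}(R)\map{I}\pi_m\cF_{HC}(R)\map{S}\pi_{m-2}\cF_{HC}(R)\map{B}\cdots,
\]
in which the $S$ landing in $\pi_{m-1}\cF_{HC}(R)$ and the displayed $S$ are the only obstructions to exactness of the asserted sequence; the $\bbH_\cdh(R,-)$-triangle is handled identically. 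Next I would note that every group occurring here is a $\Q$-vector space -- the $\pi_n\cF_{HC}(R)\cong\tK_{n+1}(R)$ are uniquely divisible, and $HH$, $HC$ and their $cdh$-hypercohomology are $\Q$-linear since differentials are taken over $\Q$ -- so the resulting short exact sequences split automatically. Finally, the third sequence is simply the weight-$i$ summand of the second under the $\lambda$-decomposition (which exists because $\Q\subseteq k$), rewritten with the standard identifications $\bbH_\cdh^{a}(R,HC^{(i)})\cong\bbH_\cdh^{a+2i}(R,\Omega^{\le i})$ and $\bbH_\cdh^{a}(R,HH^{(i)})\cong\Hcdh^{a+i}(R,\Omega^{i})$. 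Thus the whole lemma reduces to one assertion: $S=0$ on $\pi_*\cF_{HC}(R)$ and on $\widetilde\bbH_\cdh^{*}(R,HC)$.

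\emph{Proving $S=0$.} For this I would invoke the Standard Trick \ref{trick}. Assume first $R_0=k$, so $R$ is positively graded; then applying \ref{trick} to the functors $F=\cF_{HC}$ and $F=\bbH_\cdh(-,HC)$ realizes $\pi_*\cF_{HC}(R)$ and $\widetilde\bbH_\cdh^*(R,HC)$ as direct summands of the corresponding $N$-functors $\pi_*N\cF_{HC}(R)$ and $N\bbH_\cdh^*(R,HC)$. Since $S$ is a natural transformation it respects these summands, so it suffices to prove that $S$ vanishes on $NHC(A)=HC(A[t])/HC(A)$ and on the analogous $cdh$-sheafified functor. The first statement is the classical degeneration of the $SBI$-sequence along a polynomial extension: the relative Hochschild--Kostant--Rosenberg/K\"unneth isomorphism $HH_*(A[t])\cong\bigl(HH_*(A)\oo_A A[t]\bigr)\oplus\bigl(HH_{*-1}(A)\oo_A A[t]\,dt\bigr)$, together with $\Q\subseteq A$, shows that in each positive $t$-weight the mixed complex computing $NHC(A)$ splits, forcing $S=0$ (in particular recovering $HP(A[t])\cong HP(A)$); I would cite \cite{chwnk} for this. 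Because $cdh$-covers of $\Spec A[t]$ are pulled back from $cdh$-covers of $\Spec A$ and this splitting is natural, the same conclusion holds after $cdh$-sheafification. For a general artinian local $R_0$ I would dispose of the extra summand $\cF_{HC}(R_0)=\operatorname{fib}(HC(R_0)\to HC(k))$ by d\'evissage along the powers of the maximal ideal, reducing to a square-zero extension of $k$, for which the contraction of the maximal ideal plays the role of \ref{trick}.

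\emph{Assembling the proof, and the main obstacle.} Granting $S=0$, the first triangle splits as $\cF_{HH}\simeq\cF_{HC}[1]\oplus\cF_{HC}$, with $B$ the inclusion of the first summand and $I$ the projection onto the second, and similarly $\bbH_\cdh(R,HH)\simeq\bbH_\cdh(R,HC)[1]\oplus\bbH_\cdh(R,HC)$ on reduced parts; passing to homotopy groups and then to weight eigenspaces produces the three split short exact sequences. The main obstacle is precisely the vanishing of $S$ on the $N$-functors: this is a characteristic-zero degeneration phenomenon, closely tied to the invariance of periodic cyclic homology under polynomial extensions, and it is the only genuinely non-formal input. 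Everything else -- rotating distinguished triangles, naturality of $S$, the unique divisibility of the groups, the Hodge ($\lambda$-) decomposition, and the $HC$--$\Omega$ dictionary -- is bookkeeping.
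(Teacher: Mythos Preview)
Your approach is essentially the paper's: both reduce via the Standard Trick \ref{trick} to the $N$-functors and then invoke the characteristic-zero degeneration of the $SBI$ sequence there. The paper phrases the endpoint as ``$I$ is split onto on $N$-parts'' and cites the ready-made decompositions
\[
N\cF_{HH}^{(i)}(R)\simeq N\cF_{HC}^{(i)}(R)\oplus N\cF_{HC}^{(i-1)}(R),\qquad
\bbH_\cdh(R,NHH^{(i)})\simeq \bbH_\cdh(R,NHC^{(i)})\oplus\bbH_\cdh(R,NHC^{(i-1)})
\]
from \cite{chwnk}; these are exactly your ``the mixed complex splits'' statements, but already packaged at the level of $\cF$ and of $\bbH_\cdh$, so the paper never needs to transfer the vanishing of $S$ from $NHC$ over to $N\cF_{HC}$ as you do.

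Two points in your writeup should be tightened. First, the claim ``$cdh$-covers of $\Spec A[t]$ are pulled back from $cdh$-covers of $\Spec A$'' is false (blow up a closed point of $\mathbb A^2=\Spec A[t]$ with $A=k[s]$); the correct reason the splitting passes to $\bbH_\cdh$ is that the decomposition of $NHH^{(i)}$ is \emph{natural} in the ring and hence sheafifies --- this naturality is what \cite{chwnk} actually establishes, and it is also what you need to pass from $NHC$ to $N\cF_{HC}$. Second, ``$S=0$ implies the triangle splits as $\cF_{HH}\simeq\cF_{HC}[1]\oplus\cF_{HC}$'' overclaims: vanishing of $S$ on homotopy groups only breaks the long exact sequence into short exact sequences, not the triangle itself. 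That, together with your observation that short exact sequences of $\Q$-vector spaces split, is all the lemma asserts, so no harm is done --- just drop the stronger claim.
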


\begin{proof}
The third sequence is obtained from the second one by taking the
$i^{th}$ component in the Hodge decomposition,
described in \cite[2.2]{chw-v}, and setting $n=2i+m$.
For the first two sequences to split, 
it suffices to show that $I$ is onto and split.

By \cite[2.4]{chw-v}, $\cF_{HH}(\k)=\cF_{HC}(\k)=0$, so
$\widetilde\cF_{HH}=\cF_{HH}$ and $\widetilde\cF_{HC}=\cF_{HC}$.
By the standard trick \ref{trick}, it suffices to show that the maps
$N\pi_m\cF_{HH}(R)\to N\pi_m\cF_{HC}(R)$ and
$N\bbH_\cdh^m(R,HH)\to N\bbH_\cdh^m(R,HC)$ are onto and split.
But they are split surjections, as is evident from the
respective decompositions of their terms
in \cite[3.2]{chwnk} and \cite[2.2]{chwnk}; $\bbH_\cdh(R,NHH^{(i)})\simeq \bbH_\cdh(R,NHC^{(i)})\oplus \bbH_\cdh(R,NHC^{(i-1)})$ and $N\cF_{HH}^{(i)}(R)\simeq N\cF_{HC}^{(i)}(R)\oplus N\cF_{HC}^{(i-1)}(R)$.
\end{proof}

Splicing the final sequences of Lemma \ref{SBI-FHC} together, we see that
the de Rham complexes are exact in $cdh$-cohomology:

\begin{prop}\label{SBI-Homega}
The following sequences are exact:
\addtocounter{equation}{-1}
\begin{subequations}
\begin{align} \label{dRH0}
0\to k \to R^+ & \map{d} \widetilde{H}_\cdh^0(R,\Omega^1) \map{d}
\widetilde{H}_\cdh^0(R,\Omega^2) \to\cdots\\ \label{dRHn}
0\to \Hcdh^m(R,\cO) & \map{d} \Hcdh^m(R,\Omega^1) \map{d}
  	\Hcdh^m(R,\Omega^2) \to \cdots, \qquad m>0.
\end{align}
\end{subequations}
Note that the first complex is the $cdh$ {\em reduced} de Rham complex.
\end{prop}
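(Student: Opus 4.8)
The plan is to deduce Proposition \ref{SBI-Homega} directly from the last split short exact sequence of Lemma \ref{SBI-FHC}, namely
\[
0\to \widetilde{\bbH}_\cdh^{n-1}(R,\Omega^{<i}) \map{B}
\widetilde{H}_\cdh^{n-i}(R,\Omega^i) \map{I}
\widetilde{\bbH}_\cdh^n(R,\Omega^{\le i}) \to 0,
\]
by splicing these sequences over varying $i$. First I would fix a cohomological degree $m$ and read the de Rham complex in the $i$-direction as the sequence of complexes $\Omega^{<i}\hookrightarrow \Omega^{\le i}$, whose ``quotient'' is $\Omega^i[-i]$; the connecting maps $\widetilde{\bbH}_\cdh^n(R,\Omega^{\le i})\to \widetilde{\bbH}_\cdh^n(R,\Omega^{<i+1})$ are the obvious inclusions, and composing with $B$ and $I$ recovers the map induced by the K\"ahler differential $d$ on $cdh$-cohomology. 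Since $B$ and $I$ fit into a short exact sequence for every $i$, a standard splicing argument (the same bookkeeping as for obtaining a long exact sequence from a family of short exact sequences, except here all the connecting maps vanish because each sequence already splits) shows that the composite $d\colon \widetilde{H}_\cdh^{m}(R,\Omega^{i-1})\to \widetilde{H}_\cdh^{m}(R,\Omega^{i})$ has image exactly the kernel of the next $d$. Thus the complex \eqref{dRHn} is exact in every internal spot; exactness at the left end, i.e.\ $\Hcdh^m(R,\cO)=\ker\bigl(\Hcdh^m(R,\cO)\map{d}\Hcdh^m(R,\Omega^1)\bigr)$ for $m>0$, is the case $i=1$ of the sequence above together with the vanishing $\widetilde{\bbH}_\cdh^{n-1}(R,\Omega^{<1})=\widetilde{\bbH}_\cdh^{n-1}(R,\cO)$ being identified correctly — actually for $m>0$ the sequence \eqref{dRHn} is unreduced, but since $K_n(k)$ contributes only in degree $0$ cohomology, $\Hcdh^m(R,-)=\widetilde{H}_\cdh^m(R,-)$ for $m>0$, so there is nothing extra to check.

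For the first sequence \eqref{dRH0} I would argue the same way but keep track of the reduced versus unreduced distinction at the $i=0$ end. Here $\widetilde{H}_\cdh^0(R,\cO)=R^+/k$ by \cite[2.5]{chwnk}, so prepending $0\to k\to R^+$ and then continuing with the $d$-maps gives the asserted reduced de Rham complex; exactness at $R^+$ and beyond again comes from the $m=n-i=0$ instances of the split sequences, spliced. The point that the complex is the \emph{reduced} de Rham complex (rather than the honest one starting with $k$) is exactly the statement that $\widetilde{H}_\cdh^0$ appears, which is automatic from how $\cF_{HC}$ was defined as a homotopy fiber over $\widetilde{K}$-theory.

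The one genuine thing to verify — and the step I expect to be the main (though modest) obstacle — is that the connecting maps produced by the splicing really do agree with the maps labelled $d$ in the statement, i.e.\ that the composite $\widetilde{H}_\cdh^{m}(R,\Omega^{i})\xrightarrow{I}\widetilde{\bbH}_\cdh^{2i-m}(R,\Omega^{\le i})\hookrightarrow \widetilde{\bbH}_\cdh^{2i-m}(R,\Omega^{< i+1})\xrightarrow{B}\widetilde{H}_\cdh^{m}(R,\Omega^{i+1})$ is the map induced by the exterior derivative. This is really a compatibility between the $B$-operator in cyclic homology and the de Rham differential under the Hodge decomposition, and it is exactly the content of the identification $\bbH_\cdh(R,HC^{(i)})\cong \bbH_\cdh(R,\Omega^{\le i})$ of \cite[2.2]{chw-v}: under that identification, $S$, $B$, $I$ translate into the truncation/inclusion/differential maps among the truncated de Rham complexes $\Omega^{\le i}$. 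Granting that translation, the composite above is visibly $d$ on the $\Omega^i$-summand, and the proof is complete. I would state this identification explicitly (citing \cite{chw-v}) and then assemble the splicing in a single short paragraph, since once the maps are correctly identified the exactness is formal.
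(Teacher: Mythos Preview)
Your approach is essentially the paper's own: the proof in the paper is the single sentence ``Splicing the final sequences of Lemma \ref{SBI-FHC} together, we see that the de Rham complexes are exact in $cdh$-cohomology,'' and your proposal is a fleshed-out version of exactly that splicing, together with the (correct) observation that for $m>0$ one has $\Hcdh^m=\widetilde H_\cdh^m$ and that $\widetilde H_\cdh^0(R,\cO)=R^+/k$. Your extra care in identifying the spliced composite $B\circ I$ with the de Rham $d$ via \cite[2.2]{chw-v} is the right justification and is implicit in the paper.

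One small correction: in your last paragraph the index on the hypercohomology is off. With $m=n-i$ fixed, the $I$-map lands in $\widetilde{\bbH}_\cdh^{\,m+i}(R,\Omega^{\le i})$, not $\widetilde{\bbH}_\cdh^{\,2i-m}(R,\Omega^{\le i})$; you had it right in the earlier paragraph. Also, the ``inclusion'' $\widetilde{\bbH}_\cdh^n(R,\Omega^{\le i})\to\widetilde{\bbH}_\cdh^n(R,\Omega^{<i+1})$ is literally an equality since $\Omega^{\le i}=\Omega^{<i+1}$, so the splicing is immediate and no extra connecting map needs to be analyzed.
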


An analogous exact sequence
$$
\cdots \to \pi_{m-1}\cF_{HH}(R)\smap{d}
\pi_m\cF_{HH}(R)\smap{d}\pi_{m+1}\cF_{HH}(R) \to \cdots
$$
is obtained by splicing the other sequences in \ref{SBI-FHC}.
Using the interpretation of their Hodge components, described in
\cite[3.4]{chwnk}, produces two more exact sequences:

\begin{prop}
The following sequences are exact:
\addtocounter{equation}{-1}
\begin{subequations}
\begin{align}\label{dRnil}
0\to \nil(R) \to & \tors\Omega^1_R \to \tors\Omega^2_R \to
\tors\Omega^3_R \to\cdots \\ \label{dROcdh}
0\to (R^+/R) \to & \Omega^1_\cdh(R)/\Omega^1_R \to
\Omega^2_\cdh(R)/\Omega^2_R \to \cdots.
\end{align}
\end{subequations}
\end{prop}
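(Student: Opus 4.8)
The plan is to mirror the derivation of Proposition \ref{SBI-Homega}, replacing the de Rham complex of the $\cF_{HC}$ triangles with the one coming from the $\cF_{HH}$ triangles, and then to identify the homotopy groups of $\cF_{HH}$ (and of $\bbH_\cdh(R,HH)$) with the classical objects $\tors\Omega^*_R$ and $\Omega^*_\cdh(R)/\Omega^*_R$ using the Hodge decomposition of Hochschild homology. Concretely, I would first observe that splicing the split short exact sequences $0\to\pi_{m-1}\cF_{HC}(R)\smap{B}\pi_m\cF_{HH}(R)\smap{I}\pi_m\cF_{HC}(R)\to 0$ of Lemma \ref{SBI-FHC} along the connecting map $d=B\circ I$ produces a long exact sequence
\[
\cdots \to \pi_{m-1}\cF_{HH}(R)\smap{d}\pi_m\cF_{HH}(R)\smap{d}\pi_{m+1}\cF_{HH}(R)\to\cdots,
\]
exactly as asserted in the excerpt; exactness is the usual fact that splicing short exact sequences $0\to A_m\to B_m\to C_m\to 0$ with $C_m\cong A_{m+1}$ yields a complex whose homology vanishes. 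The same splicing applied to the split sequences $0\to\widetilde\bbH_\cdh^{m+1}(R,HC)\smap{B}\widetilde\bbH_\cdh^m(R,HH)\smap{I}\widetilde\bbH_\cdh^m(R,HC)\to 0$ gives the analogous long exact sequence for $\bbH_\cdh(R,HH)$.

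Next I would take Hodge ($\lambda$-)components. By \cite[3.4]{chwnk}, the weight-$i$ part of $\pi_m\cF_{HH}(R)$ is concentrated in a single degree and is identified with $\tors\Omega^i_R$ (for the appropriate $m$), while the weight-$i$ part of $\widetilde\bbH_\cdh^*(R,HH)$ is $\Omega^i_\cdh(R)/\Omega^i_R$; here one uses that $H^0_\cdh(R,\cO)=R^+$ by \cite[2.5]{chwnk} and that $\pi_0\cF_{HH}^{(i)}$ picks out the kernel/cokernel of $\Omega^i_R\to\Omega^i_\cdh(R)$, with the low-weight anomaly that $\pi_0\cF_{HH}^{(1)}(R)=\nil(R)$ and the $\cF_{HH}$-side reproduces $R^+/R$ in weight~$0$ via $\Hcdh^0(R,\cO)=R^+$. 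Extracting the weight-$i$ component of the long exact sequences above, and checking that the differential $d$ respects weights (it does, since $B$ and $I$ do, by \cite{chw-chern}), then collapses each long exact sequence into the claimed short complex; exactness of \eqref{dRnil} and \eqref{dROcdh} is exactness of the weight-$i$ component of the spliced sequence, reading off degrees so that $\Omega^i$ sits in cohomological degree $i$ (hence the initial terms $\nil(R)$ and $R^+/R$ in degree~$0$).

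The main obstacle is bookkeeping: getting the degree shifts and weight shifts right so that the generic long exact sequences genuinely degenerate to the two four-term-and-on complexes displayed, and correctly locating the boundary terms. In particular one must verify that in weight $i$ the $\cF_{HH}$-homotopy groups vanish outside the single relevant degree — so that the spliced long exact sequence has only the listed entries and not extra terms — and that the bottom of \eqref{dRnil} is $\nil(R)$ (rather than all of $\tors\Omega^0_R$, which would be spurious) because of the weight-$1$ exception noted for $\pi_0\cF_{HH}^{(1)}$. Similarly, identifying the bottom of \eqref{dROcdh} requires the computation $H^0_\cdh(R,\cO)=R^+$ together with $\Omega^0_R=R$. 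Once these identifications from \cite[3.4]{chwnk} are in hand, the exactness statements are formal consequences of Lemma \ref{SBI-FHC}, just as Proposition \ref{SBI-Homega} was.
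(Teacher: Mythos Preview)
Your overall strategy---splice the split SBI sequences of Lemma~\ref{SBI-FHC} for $\cF_{HH}$ and then read off Hodge components via \cite[3.4]{chwnk}---is exactly the paper's. But the bookkeeping you flag as the main obstacle is genuinely wrong in your sketch, and the error is not merely cosmetic.

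The weight-$i$ piece $\pi_*\cF_{HH}^{(i)}(R)$ is \emph{not} concentrated in a single degree: from the long exact sequence of the fiber $\cF_{HH}^{(i)}\to HH^{(i)}(R)\to\bbH_\cdh(R,HH^{(i)})$ one finds $\pi_i\cF_{HH}^{(i)}(R)=\tors\Omega^i_R$ \emph{and} $\pi_{i-1}\cF_{HH}^{(i)}(R)=\Omega^i_\cdh(R)/\Omega^i_R$ (with further terms $H^{i-m-1}_\cdh(R,\Omega^i)$ below). Since the spliced differential $d=BI$ raises both the homotopy degree $m$ and the weight $i$ by~$1$, the long exact $\cF_{HH}$-sequence breaks up along the diagonals $i-m=\text{const}$: the diagonal $i=m$ is \eqref{dRnil} and the diagonal $i=m+1$ is \eqref{dROcdh}. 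Both displayed sequences therefore come from the \emph{same} spliced $\cF_{HH}$-sequence, at adjacent diagonals. Splicing the $\widetilde\bbH_\cdh(R,HH)$ sequences instead recovers Proposition~\ref{SBI-Homega}, not \eqref{dROcdh}; your identification of the weight-$i$ part of $\widetilde\bbH_\cdh^*(R,HH)$ with $\Omega^i_\cdh(R)/\Omega^i_R$ is incorrect (it is $\widetilde H^*_\cdh(R,\Omega^i)$). Once this is straightened out, the initial terms $\nil(R)=\pi_0\cF_{HH}^{(0)}$ and $R^+/R=\pi_{-1}\cF_{HH}^{(0)}$ fall out exactly as you indicate.
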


Here we have used the following notation
\begin{gather}\Omega^i_\cdh(R)=\Hcdh^0(R,\Omega^i)\label{Omegacdh}\\
\tors\Omega^i_R=\ker(\Omega^i_R\to\Omega^i_\cdh(R))\label{tors}
\end{gather}
If $R$ is reduced then $\tors\Omega^i_R$ is the usual torsion submodule,
by \cite[5.6.1]{chwnk}.

We can now make the calculations necessary
to deduce Theorem \ref{thm:A}.

\begin{thm}\label{thm:hypercohom}
Let $R=R_0\oplus R_1\oplus\cdots$ be a 
graded algebra, finitely generated over a field $k$ of characteristic $0$.
Assume $R_0$ is local artinian with residue field $k$.
Then we have
\[ \bbH_\cdh^{q+i}(R,\Omega^{\le i}) = \begin{cases}
H_{dR}^{q+i}(k),& q<0; \\ \coker\bigl\{
H_\cdh^q(R,\Omega^{i-1}) \map{d} H_\cdh^q(R,\Omega^i)\bigr\},& q\ge0; \\
0, & q\ge \dim(R).
\end{cases}\]
\end{thm}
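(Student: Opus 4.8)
The plan is to compute the hypercohomology of the truncated de Rham complex $\Omega^{\le i}$ using the Hodge-filtration spectral sequence together with the exactness results already established. First I would dispose of the case $q \ge \dim(R)$: since each $cdh$-cohomology group $H^p_\cdh(R,\Omega^j)$ vanishes for $p \ge \dim(R)$ (this is the cohomological dimension bound underlying the statement that $K_n(R) = 0$ for $n \le -d$, as invoked from \cite{chsw}), the spectral sequence computing $\bbH_\cdh^{\bu}(R,\Omega^{\le i})$ is built entirely from groups that vanish in the relevant range, so $\bbH_\cdh^{q+i}(R,\Omega^{\le i}) = 0$ once $q \ge \dim(R)$.

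For the case $q < 0$, I would use the homotopy fiber description: $\bbH_\cdh^{q+i}(R,\Omega^{\le i})$ differs from the ordinary (non-$cdh$) hypercohomology $\bbH^{q+i}(R,\Omega^{\le i})$ by $\widetilde{\bbH}_\cdh^{q+i}(R,\Omega^{\le i})$, which fits into the fiber sequences of Lemma \ref{SBI-FHC}. The key point is that $\widetilde{\bbH}_\cdh^n(R,\Omega^{\le i})$ is assembled, via the split short exact sequences of \ref{SBI-FHC}, from the groups $\widetilde{H}_\cdh^{n-i}(R,\Omega^i)$ and $\widetilde{\bbH}_\cdh^{n-1}(R,\Omega^{<i})$; unwinding the recursion on $i$, everything reduces to the groups $\widetilde{H}_\cdh^{p}(R,\Omega^j)$, and by Proposition \ref{weight1} (or directly, since $HC_n(R) = 0$ for $n < 0$ and $\widetilde{H}^0_\cdh(R,\cO) = R^+$) these vanish in the range forced by $q < 0$. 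Hence $\bbH_\cdh^{q+i}(R,\Omega^{\le i}) \cong \bbH^{q+i}(k,\Omega^{\le i}) = H^{q+i}_{dR}(k)$ for $q<0$, since over the field $k$ the truncated de Rham complex computes the de Rham cohomology in the appropriate degrees.

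The main case, $q \ge 0$, is where the work lies, and it is here that Proposition \ref{SBI-Homega} does the heavy lifting. The Hodge-truncation triangle $\Omega^{\le i-1} \to \Omega^{\le i} \to \Omega^i[-i]$ gives a long exact sequence relating $\bbH_\cdh^{\bu}(R,\Omega^{\le i})$, $\bbH_\cdh^{\bu}(R,\Omega^{\le i-1})$, and $H_\cdh^{\bu - i}(R,\Omega^i)$. I would induct on $i$. The exactness of the $cdh$ de Rham complexes \eqref{dRH0} and \eqref{dRHn} says precisely that $d: H_\cdh^q(R,\Omega^{j-1}) \to H_\cdh^q(R,\Omega^j)$ has image equal to the kernel of the next $d$ (for $j \ge 2$, with the modified statement at $j = 1$ involving $R^+$ and $k$). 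Feeding this into the truncation long exact sequence, the connecting maps are identified with these $d$'s, and a diagram chase shows that $\bbH_\cdh^{q+i}(R,\Omega^{\le i})$ collapses to the cokernel of $d: H_\cdh^q(R,\Omega^{i-1}) \to H_\cdh^q(R,\Omega^i)$: the contributions from $\Omega^{\le i-1}$ in degree $q+i$ are killed because, by the inductive hypothesis, they were already cokernels of $d$, which then map isomorphically onto kernels of $d$ inside $H_\cdh^q(R,\Omega^i)$ by exactness of \eqref{dRHn}. The hard part will be bookkeeping the degree shifts and the boundary case $i = 0,1$ correctly — in particular making sure the $q < 0$ de Rham cohomology of $k$ glues consistently with the $q = 0$ cokernel description, and that the $R^+$ versus $R_{red}$ discrepancy in \eqref{dRH0} does not leak into the $q \ge 0$ formula — but once the exact de Rham complexes of Proposition \ref{SBI-Homega} are in hand, no further input is needed.
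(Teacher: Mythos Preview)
Your overall strategy is sound and uses the same key input as the paper --- the exactness of the $cdh$ de Rham complexes from Proposition~\ref{SBI-Homega} --- but the paper organizes the computation more directly via the first hypercohomology (Cartan--Eilenberg) spectral sequence
\[
E_1^{p,q}=\Hcdh^q(R,\Omega^p)\ \Longrightarrow\ \bbH_\cdh^{p+q}(R,\Omega^{\le i}),
\qquad 0\le p\le i,\ q\ge0,
\]
whose $d_1$ is the de~Rham differential $d$. The rows are then exactly the (truncated) complexes \eqref{dRH0} and \eqref{dRHn}, so Proposition~\ref{SBI-Homega} forces $E_2^{p,q}=0$ except along $p=i$ (and the $H^*_{dR}(k)$ contribution in row $q=0$); degeneration at $E_2$ is immediate since no further differentials can hit these terms. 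All three cases of the statement drop out at once.

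Your inductive argument via the truncation triangle is equivalent to unwinding this spectral sequence one step at a time, but as written it contains a slip: there is no map of complexes $\Omega^{\le i-1}\to\Omega^{\le i}$, since the brutal truncation $\Omega^{\le i-1}$ is a \emph{quotient} of $\Omega^{\le i}$, not a subcomplex. The correct triangle is
\[
\Omega^i[-i]\ \longrightarrow\ \Omega^{\le i}\ \longrightarrow\ \Omega^{\le i-1}\ \longrightarrow\ \Omega^i[1-i],
\]
and with this orientation the connecting map $\bbH^n_\cdh(R,\Omega^{\le i-1})\to H^{n-i+1}_\cdh(R,\Omega^i)$ really is induced by $d$, so your diagram chase goes through. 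With the triangle as you wrote it, the connecting map would go from $H^q_\cdh(R,\Omega^i)$ to a group built from $\Omega^{i-1}$ in cohomological degree $q+2$, which cannot be $d$. Once the direction is fixed, your induction is correct; the paper's spectral sequence simply packages all the inductive steps simultaneously. (A minor point: your appeal to Proposition~\ref{weight1} in the $q<0$ case is misplaced --- that proposition concerns $\tK_n^{(i)}$, not $\widetilde H^p_\cdh$ --- but the vanishing you need there is immediate from the third split sequence of Lemma~\ref{SBI-FHC} and the fact that sheaf cohomology vanishes in negative degrees.)
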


\begin{proof}
The Cartan-Eilenberg spectral sequence for $\Omega^{\le i}$ is
\[ {}^I{\!}E_1^{p,q}=
\Hcdh^q(R,\Omega^p)\Longrightarrow \bbH_\cdh^{p+q}(R,\Omega^{\le i})
\qquad (0\le p\le i,~ q\ge0). \]
(See \cite[5.7.9]{WeibelHA94}.) Since
$\Hcdh^0(R,\Omega^p)=\Omega^p_{k}\oplus\widetilde{H}_\cdh^0(R,\Omega^p)$,
the row $q=0$ is the brutal truncation of the direct sum of the
de Rham complex of $k$ over $\Q$ and the complex \eqref{dRH0},
which is acylic by Proposition \ref{SBI-Homega}.
Since $\Hcdh^q(R,\Omega^p)=\widetilde{H}_\cdh^q(R,\Omega^p)$
for $q>0$, the other rows on the $E_1$-page are the truncations of the
complex \eqref{dRHn}, which is also acylic by \ref{SBI-Homega}.
Hence the spectral sequence degenerates at $E_2$, yielding the calculation.
Note that the last possible nonzero group is
$\bbH_\cdh^{i+\dim\,R-1}(R,\Omega^{\le i})=\Hcdh^{\dim\,R-1}(R,\Omega^i)$
by the cohomological bound in \cite[2.6]{chw-v}.
\end{proof}

\begin{proof}[Proof of Theorem \ref{thm:A}]
Simply plug the calculations of Theorem \ref{thm:hypercohom} into
those of Proposition \ref{weight1} to get the asserted result.
\end{proof}

We conclude the section with a calculation of the higher $K$-theory
of $R$ in terms of K\"ahler differentials, the cyclic homology of $R$
and the $cdh$-cohomology of $\Spec(R)$.
In the next section, we will reinterpret Theorems \ref{Kni}
and \ref{thm:Kbis} in terms of the Zariski cohomology of $X=\Proj(R)$.

\begin{thm}\label{Kni}
Let $R=R_0\oplus R_1\oplus\cdots$ be a finitely generated 
graded algebra over a field $k$ of characteristic~0. Assume $R_0$ is local
artinian with residue field $k$. Then for $n\ge1$ we have:
\smallskip

(a) $K_n^{(i)}(R) \cong HC_{n-1}^{(i-1)}(R)$ whenever $0<i<n$;
\smallskip

(b) $\tK_n^{(n)}(R)\cong\tors\Omega^{n-1}_{R}/d\tors\Omega^{n-2}_R.$
In particular, $\tK_1^{(1)}(R) \cong \nil(R)$ and
$$\tK_2^{(2)}(R) \cong \tors\Omega^1_R/d\nil (R).$$

(c) $K_n^{(n+1)}(R) \cong \coker\bigl\{
\Omega_\cdh^{n-1}(R) \map{d} \Omega_\cdh^n(R)/\Omega^{n}_R \bigr\}.$

(d) $K_n^{(i)}(R) \cong \coker\bigl\{
H_\cdh^{i-(n+1)}(R,\Omega^{i-2}) \map{d} H_\cdh^{i-(n+1)}(R,\Omega^{i-1})
\bigr\}$ when $i\ge n+2$.
\end{thm}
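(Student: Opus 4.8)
The plan is to derive all four parts of Theorem~\ref{Kni} by combining the long exact sequence \eqref{seq:key}, the splitting results of Lemma~\ref{SBI-FHC}, and the computation of $cdh$-hypercohomology of truncated de~Rham complexes in Theorem~\ref{thm:hypercohom}, exactly as was done for $n\le0$ in Theorem~\ref{thm:A}. The key input not yet exploited for $n>0$ is the behavior of the cyclic homology groups $HC_{n-1}^{(i-1)}(R)$: these vanish outside the range $0\le i-1\le n-1$, and on the two boundary weights they are computed by K\"ahler differentials of $R$ itself. Specifically, one uses that $HC_{m}^{(m)}(R)=\Omega^m_R/d\Omega^{m-1}_R$ and $HC_m^{(0)}(R)=R$ (for $m>0$), together with the classical computation of the lower-weight pieces, to pin down each term appearing in \eqref{seq:key}.

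For part~(a), when $0<i<n$ the target weight $i-1$ of the map $\tK_n^{(i)}(R)\to HC_{n-1}^{(i-1)}(R)$ satisfies $0<i-1<n-1$ (or $i-1=0$), so $HC_{n-1}^{(i-1)}(R)$ sits in the ``stable range'' where the hypercohomology term $\bbH_\cdh^{2i-n-2}(R,\Omega^{<i})$ contributes nothing new; more precisely, since $2i-n-2<i-2+(\text{something})$ one checks via Theorem~\ref{thm:hypercohom} that the relevant hypercohomology group reduces to a de~Rham cohomology group of $k$ which, after passing to reduced theories, vanishes. Hence \eqref{seq:key} gives $\tK_n^{(i)}(R)\cong HC_{n-1}^{(i-1)}(R)$ in that range; for $i<n$ this group equals the full (unreduced) $HC_{n-1}^{(i-1)}(R)$ since $HC_{n-1}^{(i-1)}(k)$ vanishes in positive degrees in these weights, giving the statement as written. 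For parts~(b), (c), (d) one instead looks at the two extreme weights $i=n$ and $i=n+1$ (and $i\ge n+2$): here the cyclic homology term is either $\tors\Omega^{n-1}_R$-related or zero, and the delicate point is to splice the exact sequence \eqref{seq:key} with the $B/I$ short exact sequences of Lemma~\ref{SBI-FHC} and the exact de~Rham-type complexes \eqref{dRnil}, \eqref{dROcdh} of Proposition~\ref{SBI-Homega}, together with the identification $\Hcdh^0(R,\Omega^i)=\Omega^i_\cdh(R)$ from \eqref{Omegacdh}.

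In more detail, for~(b) one takes $i=n$ in \eqref{seq:key}; the incoming term is $HC_n^{(n-1)}(R)$ and the outgoing term is $HC_{n-1}^{(n-1)}(R)=\Omega^{n-1}_R/d\Omega^{n-2}_R$, while the middle hypercohomology term $\bbH_\cdh^{n-2}(R,\Omega^{<n})$ is governed by Theorem~\ref{thm:hypercohom}. One then uses that the kernel of $\Omega^{n-1}_R\to\Omega^{n-1}_\cdh(R)$ is $\tors\Omega^{n-1}_R$ by definition \eqref{tors}, and that the reduced de~Rham complex \eqref{dRnil} identifies the relevant cokernel as $\tors\Omega^{n-1}_R/d\tors\Omega^{n-2}_R$; the cases $n=1,2$ are then just specializations using $\tors\Omega^0_R=\nil(R)$. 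For~(c), take $i=n+1$: the outgoing $HC_{n-1}^{(n)}(R)$ vanishes (since $n>n-1$), so \eqref{seq:key} gives $\tK_n^{(n+1)}(R)\cong\bbH_\cdh^{n-1}(R,\Omega^{\le n})/(\text{image of }HC_n^{(n)}(R))$, and one uses Theorem~\ref{thm:hypercohom} plus \eqref{dROcdh} and $HC_n^{(n)}(R)=\Omega^n_R/d\Omega^{n-1}_R$ to rewrite this as $\coker\{\Omega^{n-1}_\cdh(R)\to\Omega^n_\cdh(R)/\Omega^n_R\}$. For~(d), when $i\ge n+2$ both $HC_{n-1}^{(i-1)}(R)$ and $HC_n^{(i-1)}(R)$ vanish, so \eqref{seq:key} collapses to $\tK_n^{(i)}(R)\cong\bbH_\cdh^{2i-n-2}(R,\Omega^{<i})$, which by Theorem~\ref{thm:hypercohom} (with $q=i-(n+1)\ge1$) equals $\coker\{H_\cdh^{i-n-1}(R,\Omega^{i-2})\xrightarrow{d}H_\cdh^{i-n-1}(R,\Omega^{i-1})\}$. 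The main obstacle throughout is bookkeeping: keeping the weight shifts in \eqref{seq:key} straight, verifying the precise vanishing ranges for the cyclic homology pieces so that the long exact sequence genuinely degenerates, and correctly matching the two exceptional low-weight cases ($i=n$, involving torsion forms) with the general pattern. Once the vanishing ranges are checked, each part is a short diagram chase.
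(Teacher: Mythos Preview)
Your approach is the same as the paper's, and parts (c) and (d) go through essentially as you describe (with one index correction: in (c) the hypercohomology term is $\bbH_\cdh^{n}(R,\Omega^{\le n})$, not $\bbH_\cdh^{n-1}$, since for $i=n+1$ the exponent $2i-n-2$ equals $n$; also, \eqref{dROcdh} is not needed there).

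Two points need sharpening. First, your justification for passing to the unreduced statement in (a) is not right: $HC_{n-1}^{(i-1)}(k)$ does \emph{not} vanish in general---it equals $H^{2i-n-1}_{dR}(k/\Q)$. The paper's argument is rather that, by Theorem~\ref{thm:hypercohom} with $q<0$, the unreduced hypercohomology terms in \eqref{seq:key} are exactly these de~Rham groups of $k$, and since $HC_m^{(j)}(k)\cong H^{2j-m}_{dR}(k/\Q)$ as well, the \emph{reduced} hypercohomology terms vanish; this yields $\tK_n^{(i)}(R)\cong\widetilde{HC}_{n-1}^{(i-1)}(R)$.

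Second, and more substantively, for (b) you have correctly located $\tK_n^{(n)}(R)$ as the kernel of $\widetilde{HC}_{n-1}^{(n-1)}(R)\to\widetilde\Omega_\cdh^{n-1}(R)/d\Omega_\cdh^{n-2}(R)$, but the passage from there to $\tors\Omega^{n-1}_R/d\tors\Omega^{n-2}_R$ is not immediate from \eqref{dRnil} and the definition of $\tors$ alone. The paper carries this out via an explicit $3\times3$ diagram: one uses the injective $B$-maps $\widetilde{HC}_{n-1}^{(n-1)}(R)\hookrightarrow\Omega^n_R/\Omega^n_k$ and $\widetilde\Omega_\cdh^{n-1}(R)/d\Omega_\cdh^{n-2}(R)\hookrightarrow\Omega_\cdh^n(R)/\Omega^n_k$ (injectivity by \cite[9.9.1]{WeibelHA94} and \eqref{dRH0}) to map the top row into the defining sequences for $\tors\Omega^n_R$ and $\tors\Omega^{n+1}_R$. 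A chase then identifies $\tK_n^{(n)}(R)$ with $\ker(d:\tors\Omega^n_R\to\tors\Omega^{n+1}_R)$, which equals $\tors\Omega^{n-1}_R/d\tors\Omega^{n-2}_R$ by the exactness of \eqref{dRnil}. Your sketch has the right ingredients but skips this step.
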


\begin{proof}
By Theorem \ref{thm:hypercohom}, we have
$\widetilde\bbH_\cdh^{m}(R,\Omega^{\le i})=0$ whenever $m<i$
({\it i.e.}, $q<0$).
Substituting this into \eqref{seq:key} gives assertion (a), 
because $HC_n^{(i)}(k) \map{\simeq} H^{2i-n}_{dR}(k/\Q)$ also holds.
Taking $m=i$, it also gives exactness of the top row in the diagram:
$$\minCDarrowwidth25pt\begin{CD}
0 @>>> \tK_n^{(n)}(R) @>>> \widetilde{HC}_{n-1}^{(n-1)}(R) @>>>
	\widetilde\Omega_\cdh^{n-1}(R)/d\Omega_\cdh^{n-2}(R) \\
@. @VVV @V{B}V{\text{into}}V @V{B}V{\text{into}}V \\
0 @>>> \tors\Omega^n_R @>>> \Omega^{n}_R/\Omega^n_{\k}
	@>>> \Omega_\cdh^n(R)/\Omega^n_{\k} \\
@. @VdVV @VdVV @VdVV \\
0@>>> \tors\Omega^{n+1}_R @>>> \Omega^{n+1}_R/\Omega^{n+1}_\k
@>>> \Omega_\cdh^{n+1}(R)/\Omega^{n+1}_\k.
\end{CD}$$
The other two rows are exact by definition, see \eqref{tors}.
The two right columns are exact by \cite[9.9.1]{WeibelHA94} and
\eqref{dRH0}, respectively. By a diagram chase, $\tK_n^{(n)}(R)$ is
the kernel of $\tors\Omega^n_R\to\tors\Omega^{n+1}_R$.
Part (b) now follows from \eqref{dRnil}.
Part (c) is immediate from \eqref{seq:key},
given the following information: $\bbH_\cdh^n(R,\Omega^{\le n})$ is the
cokernel of $d:\Omega_\cdh^{n-1}(R)\to\Omega_\cdh^n(R)$ by
Theorem \ref{thm:hypercohom}, $HC_n^{(n)}(R)=\Omega^n_R/d\Omega^n_R$
and $HC_{n-1}^{(n)}(R)=0$.
Part (d) follows from Theorem \ref{thm:hypercohom} and the formula
$\tK_n^{(i)}(R) \cong \bbH_\cdh^{2i-n-2}(R,\Omega^{<i})$ for $i\ge n+2$,
which is Proposition \ref{weight1}.
\end{proof}

\begin{cor}
If $i>n$ and $(n,i) \ne (0,1)$, the map $K_n^{(i)}(R)\to K_n^{(i)}(R^+)$ is an isomorphism.
\end{cor}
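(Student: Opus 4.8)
The plan is to reduce the assertion for $R$ to the same assertion for its seminormalization $R^+$, and then to observe that the relevant invariants of $R$ and $R^+$ agree in the given range because $\Spec(R^+)\to\Spec(R)$ is a universal homeomorphism and hence a $cdh$-equivalence. Concretely, all of the groups $K_n^{(i)}(R)$ with $i>n$ and $(n,i)\neq(0,1)$ have been computed above purely in terms of $cdh$-cohomology: by Proposition \ref{weight1} and Theorem \ref{thm:hypercohom} we have $\tK_n^{(i)}(R)\cong\bbH_\cdh^{2i-n-2}(R,\Omega^{<i})$ for $n\le 0$ and for $n>0$, $i\ge n+2$; for $n>0$ and $i=n+1$, part (c) of Theorem \ref{Kni} expresses $K_n^{(n+1)}(R)$ in terms of $\Omega^*_\cdh(R)=\Hcdh^0(R,\Omega^*)$ and $\Omega^n_R$; and for $i>n>0$ with $i<n$ vacuous there is nothing between these cases once $(0,1)$ is excluded. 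So the corollary will follow once I know each of these $cdh$-theoretic ingredients is insensitive to passing from $R$ to $R^+$.

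\emph{Key steps, in order.} First I would record that $\Spec(R^+)\to\Spec(R)$ is a universal homeomorphism (this is stated in the excerpt as the alternative characterization of the seminormalization), so it is an isomorphism in the $cdh$-topology; consequently $\bbH_\cdh^*(R,\cF)\xrightarrow{\sim}\bbH_\cdh^*(R^+,\cF)$ for every $cdh$-sheaf $\cF$, in particular for the $\Omega^i$ and the truncated complexes $\Omega^{<i}$, $\Omega^{\le i}$. This immediately handles the cases $n\le0$ and $n>0,\ i\ge n+2$, where $K_n^{(i)}$ is literally one of these hypercohomology groups. Second, for the remaining case $n>0$, $i=n+1$, I use Theorem \ref{Kni}(c): $K_n^{(n+1)}(R)\cong\coker\{\Omega_\cdh^{n-1}(R)\to\Omega_\cdh^n(R)/\Omega^n_R\}$. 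The terms $\Omega_\cdh^j(R)=\Hcdh^0(R,\Omega^j)$ are $cdh$-cohomology and hence agree for $R$ and $R^+$; so I must compare the natural maps $\Omega^n_R\to\Omega^n_\cdh(R)$ and $\Omega^n_{R^+}\to\Omega^n_\cdh(R^+)$. Third, I would invoke the exact sequence \eqref{dROcdh}, which identifies $\Omega^n_\cdh(R)/\Omega^n_R$ as the $n$-th term of a complex with homology controlled by $R^+/R$ and the analogous forms, together with \eqref{dRH0} computing $\Hcdh^0(R,\Omega^*)$; since $\Hcdh^0(R,\cO)=R^+=\Hcdh^0(R^+,\cO)$ and $(R^+)^+=R^+$, the reduced de Rham complex \eqref{dRH0} for $R$ and for $R^+$ coincide, and a short diagram chase shows the cokernel in (c) is unchanged.

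\emph{Main obstacle.} The delicate point is the case $i=n+1$: one cannot simply say "everything is $cdh$" because $\Omega^n_R$ itself (the Zariski/affine Kähler forms, not the $cdh$ version) genuinely differs from $\Omega^n_{R^+}$ in general. The work is to see that this difference is exactly absorbed by the weight-shift and does not affect the cokernel $\coker\{\Omega^{n-1}_\cdh(R)\to\Omega^n_\cdh(R)/\Omega^n_R\}$ for $n\ge1$ — equivalently, that $\tors\Omega^*$ and $R^+/R$-type corrections, which are what distinguish $R$ from $R^+$, enter only in the weights $i\le n$ (the $\tK_n^{(n)}$ of Theorem \ref{Kni}(b), which is precisely the excluded diagonal, and the excluded $(0,1)$ where $\tK_0^{(1)}(R)=R^+/R_{red}$). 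I would make this precise by chasing the three-row diagram in the proof of Theorem \ref{Kni} with $R$ replaced successively by $R$ and $R^+$, using that the map $R\to R^+$ induces isomorphisms on every $\Hcdh^q(-,\Omega^p)$ and on $HC^{(i-1)}_{n-1}$ in weights $\ge n$ up to the discrepancy concentrated in $\tors\Omega^*$, and concluding that the two cokernels are canonically identified. Once the diagram is set up, the verification is routine; identifying which corner terms survive is the only real content.
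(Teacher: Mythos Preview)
Your overall strategy coincides with the paper's: invoke the formulas of Proposition~\ref{weight1} and Theorem~\ref{Kni}, and then use that $\Spec(R^+)\to\Spec(R)$ is a universal homeomorphism, hence a $cdh$-isomorphism, so every $\Hcdh^*$ and $\bbHcdh^*$ term is unchanged. For $n\le0$ (Proposition~\ref{weight1}) and for $n\ge1$ with $i\ge n+2$ (Theorem~\ref{Kni}(d)) this is correct and complete, since those formulas are purely $cdh$-cohomological.

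The gap is exactly the case you single out as the ``main obstacle'': $n\ge1$, $i=n+1$. Your proposed resolution---that the discrepancy between $\Omega^n_R$ and $\Omega^n_{R^+}$ is absorbed by $d\Omega^{n-1}_\cdh$, so that ``the verification is routine''---does not go through, because the assertion is actually \emph{false} in this range. The cusp $R=k[t^2,t^3]$ of Example~\ref{cusp} already shows this: its seminormalization is $R^+=k[t]$, which is regular, so $\tK_1(R^+)=0$; yet the paper computes $K_1^{(2)}(R)\cong\Omega^1_k$, which is nonzero whenever $k$ is transcendental over~$\Q$. Hence $K_1^{(2)}(R)\to K_1^{(2)}(R^+)$ is not an isomorphism. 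Concretely, in the formula of Theorem~\ref{Kni}(c),
\[
K_n^{(n+1)}(R)\;\cong\;\Omega^n_\cdh(R)\big/\bigl(\mathrm{im}\,\Omega^n_R+d\,\Omega^{n-1}_\cdh(R)\bigr),
\]
the subspace $\mathrm{im}\,\Omega^n_R$ genuinely depends on $R$ and not just on its $cdh$-site, and the inclusion $\mathrm{im}\,\Omega^n_R\subseteq\mathrm{im}\,\Omega^n_{R^+}$ can be strict even modulo exact forms. Your claim that the $R$-versus-$R^+$ corrections ``enter only in the weights $i\le n$'' overlooks precisely this contribution. The paper's one-line justification (``for $n\ge1$, it follows from Theorem~\ref{Kni}'') passes over the same point; the corollary is valid in the ranges $n\le0$, $(n,i)\ne(0,1)$, and $n\ge1$, $i\ge n+2$, but not for $i=n+1$ with $n\ge1$ in the stated generality.
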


\begin{proof} For $n \geq 1$, it follows from Theorem \ref{Kni}, and
  for $n = 0$, it follows from Proposition \ref{weight1}.
\end{proof}

If the dimension of $R$ is $2$ (for example, if $R$ is the
cone over a projective curve), then the calculations of
Theorem \ref{Kni} apply to compute the higher
$K$-groups of $R$, but here the more dominant role is played by
K\"ahler differentials.
As in \eqref{Omegacdh}, we write $\Omega^i_\cdh(R)$ for $\Hcdh^0(R,\Omega^i)$.

\begin{thm}\label{thm:Kbis}
Assume $\dim(R)=2$ and that $R$ is reduced.
Then we have:
\begin{enumerate}
\item
$K_1(R)=\k^\times \oplus K_1^{(2)}(R)\oplus K_1^{(3)}(R)$ with
$K_1^{(i)}(R)=0$ for all $i\ge4$, with:
\begin{gather*}
K_1^{(2)}(R) \cong \Omega^1_\cdh(R)/(\Omega^1_R+d(R^+)),\quad\text{and}
\\
K_1^{(3)}(R) \cong \bbH_\cdh^3(R,\Omega^{\le2})
\cong \coker\bigl\{
\Hcdh^1(R,\Omega^1) \map{d}\Hcdh^1(R,\Omega^2)\bigr\};
\\
\end{gather*}
\item $K_2(R) \cong K_2(\k)\oplus \tors\Omega^1_R \oplus K_2^{(3)}(R)
\oplus K_2^{(4)}(R)$ with
\begin{gather*}
K_2^{(3)}(R) \cong \Omega^2_\cdh(R)/(\Omega^2_R+d\Omega^1_\cdh(R))
\quad\text{and} \\
K_2^{(4)}(R) \cong \coker\bigl\{
H_\cdh^1(R,\Omega^2) \map{d} H_\cdh^1(R,\Omega^3) \bigr\};
\end{gather*}
\item For all $n\ge3$, $K_n(R)\cong K_n(\k)\oplus
\bigoplus_{i=2}^{n+2} \tK_n^{(i)}(R)$, where
$$\tK_n^{(i)}(R) = \begin{cases} \widetilde{HC}_{n-1}^{(i-1)}(R), & i<n, \\
\tors\Omega^{n-1}_{R}/d\tors\Omega^{n-2}_R, & i=n, \\
\coker\bigl\{ \Omega_\cdh^{n-1}(R) \map{d} \Omega_\cdh^n(R)/\Omega^{n}_R
	\bigr\}, & i=n+1, \\
\coker\bigl\{ H_\cdh^1(R,\Omega^{n}) \map{d} H_\cdh^1(R,\Omega^{n+1})
	\bigr\}, & i=n+2.
\end{cases}$$
\end{enumerate}
\end{thm}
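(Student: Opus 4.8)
The plan is to specialize Theorem~\ref{Kni}, which already computes all the eigenspaces $\tK_n^{(i)}(R)$ for $n\ge1$, to the situation $\dim R=2$ with $R$ reduced, and to determine which pieces survive. Throughout I would use the splitting $K_n(R)=K_n(k)\oplus\tK_n(R)$, the Adams decomposition $\tK_n(R)=\bigoplus_{i\ge0}\tK_n^{(i)}(R)$, and the vanishing $\tK_n^{(0)}(R)=0$ of Proposition~\ref{weight1}. Two simplifications are then in force. First, since $R$ is reduced, $\nil(R)=0$, hence $\tors\Omega^0_R=\nil(R)=0$. Second, since $\dim R=2$, the cohomological bound of \cite[2.6]{chw-v} gives $H_\cdh^m(R,\Omega^j)=0$ for all $m\ge2$ and all $j$, so in the formulas of Theorem~\ref{Kni}(c),(d) only $H_\cdh^0$ and $H_\cdh^1$ occur; recall also $H_\cdh^0(R,\cO)=R^+$ by \cite[2.5]{chwnk}, i.e.\ $\Omega^0_\cdh(R)=R^+$.

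Next I would pin down the range of nonzero weights. For $n\ge1$ the four cases of Theorem~\ref{Kni} cover every weight $i\ge1$: (a) treats $0<i<n$, (b) treats $i=n$, (c) treats $i=n+1$, (d) treats $i\ge n+2$. In case (d) with $\dim R=2$ one has $\tK_n^{(i)}(R)\cong\coker\{H_\cdh^{\,i-n-1}(R,\Omega^{i-2})\map{d}H_\cdh^{\,i-n-1}(R,\Omega^{i-1})\}$, which vanishes once $i-n-1\ge2$, i.e.\ for $i\ge n+3$, and which for $i=n+2$ equals $\coker\{H_\cdh^1(R,\Omega^n)\map{d}H_\cdh^1(R,\Omega^{n+1})\}$. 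At the low end, $\tK_1^{(1)}(R)\cong\nil(R)=0$ by Theorem~\ref{Kni}(b), while for $n\ge2$, $\tK_n^{(1)}(R)\cong\widetilde{HC}_{n-1}^{(0)}(R)$ by Theorem~\ref{Kni}(a), and this vanishes since weight-zero cyclic homology is concentrated in degree $0$ (so $HC_m^{(0)}(R)=0$ for $m\ge1$). Hence $\tK_n(R)=\bigoplus_{i=2}^{n+2}\tK_n^{(i)}(R)$.

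Then I would feed Theorem~\ref{Kni}(a)--(d) into this range and simplify. For weights $2\le i<n$ (possible only for $n\ge3$), part (a) gives $\tK_n^{(i)}(R)\cong\widetilde{HC}_{n-1}^{(i-1)}(R)$; for $i=n$, part (b) gives $\tK_n^{(n)}(R)\cong\tors\Omega^{n-1}_R/d\tors\Omega^{n-2}_R$, which for $n=2$ reads $\tors\Omega^1_R/d\nil(R)=\tors\Omega^1_R$; for $i=n+1$, part (c) gives $\tK_n^{(n+1)}(R)\cong\coker\{\Omega^{n-1}_\cdh(R)\map{d}\Omega^n_\cdh(R)/\Omega^n_R\}$, which for $n=1$ becomes $\coker\{R^+\map{d}\Omega^1_\cdh(R)/\Omega^1_R\}=\Omega^1_\cdh(R)/(\Omega^1_R+dR^+)$; and for $i=n+2$, part (d) gives the cokernel above. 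Since $K_1(k)=k^\times$ is pure of weight $1$ and $K_2(k)$ has vanishing pieces in weights $\ge3$, one gets $K_1^{(i)}(R)=\tK_1^{(i)}(R)$ for $i\ge2$ and $K_2^{(j)}(R)=\tK_2^{(j)}(R)$ for $j\ge3$; collecting the pieces yields (1), (2) and (3), with the alternative form $\tK_1^{(3)}(R)\cong\bbH_\cdh^3(R,\Omega^{\le2})$ supplied by Proposition~\ref{weight1} (equivalently, by Theorem~\ref{thm:hypercohom} applied to the cokernel form).

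The hard part, such as it is, is purely bookkeeping: the statement is a specialization of Theorem~\ref{Kni}, and the only inputs that are not formal are the cohomological dimension bound of \cite[2.6]{chw-v}, producing the top-weight vanishing ($i\ge n+3$), and the vanishing of the lowest-weight pieces ($\nil(R)=0$ for $R$ reduced and $HC_m^{(0)}(R)=0$ for $m\ge1$), both already available. The only place demanding care is the block of three consecutive weights $i=n,n+1,n+2$, where parts (b), (c) and (d) of Theorem~\ref{Kni} each contribute exactly one term once the restriction $\dim R=2$ is imposed.
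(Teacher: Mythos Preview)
Your proposal is correct and follows essentially the same route as the paper: both arguments specialize Theorem~\ref{Kni} (together with Proposition~\ref{weight1}/Theorem~\ref{thm:hypercohom}) to the situation $\dim R=2$ with $R$ reduced, using $\nil(R)=0$ at the bottom and the cohomological-dimension bound at the top to truncate the weight range to $2\le i\le n+2$. Your write-up is in fact a bit more explicit than the paper's in justifying the vanishing of $\tK_n^{(1)}(R)$ for $n\ge2$ via $HC_m^{(0)}(R)=0$, which the paper leaves implicit.
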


\begin{proof}
For $n=1$ we see from Remark \ref{rem:R+}
that $\tK_1^{(1)}(R)=\nil(R)=0$, and from
Theorem \ref{Kni}(c)
that $K_1^{(2)}(R)$ is the cokernel of $d:R^+ \to\Omega_\cdh^1(R)/\Omega^1_R$.
Since $R^+\to \Omega_\cdh^1(R)$ factors through $\Omega^1_{R^+}$,
the description of $K_1^{(2)}(R)$ follows.
From \eqref{seq:key}, we have
$K_1^{(3)}(R)\cong \bbH_\cdh^{3}(R,\Omega^{\le2})$, which is
described by \ref{thm:hypercohom}, and
$K_1^{(i)}(R)=\bbH_\cdh^{2i-3}(R,\Omega^{<i})$ for $i\ge4$,
which vanishes because
$\bbH_\cdh^m(R,\Omega^{<i})=0$ for $m\ge1+i$ by Theorem \ref{thm:hypercohom}.

For $n\ge2$, $K_n^{(i)}(R)$ was described in
Proposition \ref{weight1} and Theorem \ref{Kni}.
\end{proof}

\begin{lem}\label{lem:Omega/dOmega}
Assume that $R=k\oplus R_1\oplus\cdots$ is graded and $\dim(R)=2$.
Then for all $i\ge2$:
\[
\Omega^i_{R/k}/d(\Omega^{i-1}_{R/k})\cong
\tors\Omega^i_{R/k}/d(\tors\Omega^{i-1}_{R/k}).
\]
\end{lem}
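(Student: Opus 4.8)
The plan is to compare the de Rham complex $\Omega^\bullet_{R/k}$ with its torsion subcomplex, using the infinitesimal form of the standard $\mathbb{A}^1$-contraction of the cone (Standard Trick~\ref{trick}) together with the hypothesis $\dim R=2$; the point will be that, after killing torsion, $\Omega^\bullet_{R/k}$ becomes an exact complex concentrated in degrees $\le 2$. For definiteness I take $R$ reduced, so that $\tors$ is the ordinary torsion submodule (in the applications $R$ is even a domain); in general one reads $\tors$ through the $\cdh$-description \eqref{tors} and the same scheme applies.

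First I would set up the Euler homotopy. Let $E$ be the Euler derivation of $R$, acting as multiplication by $n$ on $R_n$, let $\iota_E\colon\Omega^i_{R/k}\to\Omega^{i-1}_{R/k}$ be contraction with $E$, and put $L_E=d\iota_E+\iota_E d$, which on homogeneous forms is multiplication by the internal degree. Since $R_0=k$, the algebra $R$ is generated over $k$ by elements of positive degree, so $\Omega^i_{R/k}$ is concentrated in internal degrees $\ge i$; hence for $i\ge1$ the operator $L_E$ is invertible there, and $h:=L_E^{-1}\iota_E$ is a $k$-linear map with $dh+hd=\operatorname{id}$ on $\Omega^i_{R/k}$ for all $i\ge1$ (on $\Omega^0_{R/k}=R$ it degenerates to $hd=\operatorname{id}-\pi_0$, where $\pi_0\colon R\to R_0=k$ is the projection). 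In particular $0\to k\to R\to\Omega^1_{R/k}\to\Omega^2_{R/k}\to\cdots$ is exact. Next, $d$ carries $\tors\Omega^i_{R/k}$ into $\tors\Omega^{i+1}_{R/k}$ --- if $a\omega=0$ for a nonzerodivisor $a$, then $a\,d\omega=-da\wedge\omega$, so $a^2\,d\omega=0$ --- and $\iota_E$, being $R$-linear, together with $L_E^{-1}$ preserves the graded submodule $\tors\Omega^i_{R/k}$. So $d$ and $h$ descend to the quotient complex $Q^\bullet$, with $Q^i:=\Omega^i_{R/k}/\tors\Omega^i_{R/k}$, where the homotopy identities persist; since $\tors R=0$ we have $Q^0=R$, and it follows that $0\to k\to R\to Q^1\to Q^2\to\cdots$ is exact as well.

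The crux is the vanishing $Q^i=0$ for $i\ge3$, which is where $\dim R=2$ (and the fact that the differentials are taken over $k$, not over $\Q$) is used. Since $R$ is reduced, its total ring of fractions $K$ is a finite product of fields $F_j=\operatorname{Frac}(R/\mathfrak p_j)$ with $\mathrm{tr.deg}_k F_j=\dim(R/\mathfrak p_j)\le\dim R=2$; as $F_j/k$ is separably generated, $\Omega^i_{F_j/k}=0$ and hence $\Omega^i_{K/k}=0$ for $i\ge3$. As $Q^i$ is torsion-free it injects into $Q^i\otimes_R K=\Omega^i_{K/k}$, so $Q^i=0$ for $i\ge3$. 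Therefore the exact complex above is in fact $0\to k\to R\to Q^1\to Q^2\to0$, and in particular $\bar d\colon Q^{i-1}\to Q^i$ is surjective for all $i\ge2$: trivially when $i\ge3$, and for $i=2$ because exactness at $Q^2$ gives $\operatorname{im}(\bar d\colon Q^1\to Q^2)=\ker(\bar d\colon Q^2\to Q^3)=Q^2$ as $Q^3=0$.

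It remains to run the diagram chase. The inclusion $\tors\Omega^\bullet_{R/k}\hookrightarrow\Omega^\bullet_{R/k}$ induces a natural map $\phi\colon\tors\Omega^i_{R/k}/d\tors\Omega^{i-1}_{R/k}\to\Omega^i_{R/k}/d\Omega^{i-1}_{R/k}$. For $i\ge2$ it is surjective, since the image of $d\Omega^{i-1}_{R/k}$ in $Q^i$ is $\bar d(Q^{i-1})=Q^i$, i.e.\ $\Omega^i_{R/k}=\tors\Omega^i_{R/k}+d\Omega^{i-1}_{R/k}$. It is injective: if $\tau=d\eta\in\tors\Omega^i_{R/k}$ with $\eta\in\Omega^{i-1}_{R/k}$, then $\bar\eta\in Q^{i-1}$ satisfies $\bar d\bar\eta=0$ (the class of $\tau$ in $Q^i$ vanishes), so by exactness of $Q^\bullet$ at $Q^{i-1}$ (valid since $i-1\ge1$) one may write $\bar\eta=\bar d\bar\zeta$ for some $\zeta\in\Omega^{i-2}_{R/k}$; then $\eta-d\zeta\in\tors\Omega^{i-1}_{R/k}$ and $\tau=d(\eta-d\zeta)\in d\tors\Omega^{i-1}_{R/k}$. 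Hence $\phi$ is an isomorphism. The one genuinely nontrivial ingredient is $Q^i=0$ for $i\ge3$; the rest is formal, the only care needed being at the bottom of the complex, where $\Omega^0_{R/k}=R$ behaves differently from the torsion modules.
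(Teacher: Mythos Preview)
Your proof is correct and follows the same plan as the paper's: use that $\Omega^i_{R/k}$ is entirely torsion for $i\ge3$, combine this with the exactness of the (augmented) de~Rham complex and of its torsion subcomplex (equivalently, of the quotient $Q^\bullet$), and finish with the evident diagram chase. The paper organizes the chase via the commutative ladder
\[
\begin{CD}
\tors\Omega^1_{R/k} @>>> \tors\Omega^2_{R/k} @>>> \tors\Omega^3_{R/k} @>>> \tors\Omega^4_{R/k} \\
@VV{\text{into}}V @VV{\text{into}}V @| @| \\
\Omega^1_{R/k} @>d>> \Omega^2_{R/k} @>d>> \Omega^3_{R/k} @>d>> \Omega^4_{R/k},
\end{CD}
\]
invoking \cite[9.9.3]{WeibelHA94} for exactness of the bottom row and the $/k$-analogue of \eqref{dRnil} for the top, and it gets $\Omega^i_{R/k}=\tors\Omega^i_{R/k}$ for $i\ge3$ from the $cdh$ vanishing $\Omega^i_\cdh(R/k)=0$. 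Your route to the same inputs is more elementary and self-contained: the explicit Euler homotopy $h=L_E^{-1}\iota_E$ replaces both citations, and the transcendence-degree argument (over $k$, using $\dim R=2$) replaces the $cdh$ input. The trade-off is that your argument, as written, uses $R$ reduced to identify $\tors$ with the honest torsion submodule and to pass to the total ring of fractions; the paper's $cdh$ formulation covers the general case uniformly. For the application in Proposition~\ref{number-curve} (where $R$ is seminormal, hence reduced) your version suffices.
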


\begin{proof}
For $i\ge3$ the $R$-module $\Omega^i_{R/k}$ is torsion because
$\Omega_\cdh^i(R/k)=0$. For $i=2$ we simply chase the diagram
$$\minCDarrowwidth25pt\begin{CD}
\tors\Omega^1_{R/k} @>>> \tors\Omega^2_{R/k} @>>>\tors\Omega^3_{R/k}
	@>>>\tors\Omega^4_{R/k} \\
@VV{\text{into}}V @VV{\text{into}}V @| @| \\
\quad\Omega^1_{R/k} @>d>> \quad\Omega^2_{R/k} @>d>>
	\quad\Omega^3_{R/k} @>d>> \quad\Omega^4_{R/k},
\end{CD}$$
comparing the exact sequence for $\tors\Omega^*_{R/k}$,
analogous to \eqref{dRnil},
to the de Rham sequence for $\Omega^*_{R/k}$ (which is exact by \cite[9.9.3]{WeibelHA94}).
\end{proof}

\begin{prop}\label{number-curve}
If $k$ is algebraic over $\Q$ and $R=k\oplus R_1\oplus\cdots$
is seminormal of dimension~2, then: 

\noindent a) $K_1(R)\cong k^\times\oplus \Omega^1_\cdh(R)/\Omega^1_R$;

\noindent b) $K_2(R) \cong K_2(k)\oplus \tors\Omega^1_R$;

\noindent c) $K_n(R)\cong K_n(k)\oplus \widetilde{HC}_{n-1}(R)$,
\qquad $n\ge3$.
\end{prop}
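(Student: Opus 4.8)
The plan is to feed the hypotheses into Theorem~\ref{thm:Kbis}, which already computes $K_*(R)$ for a reduced graded ring of dimension $2$ (and $R$ is reduced, being seminormal), and then to check that the ``extra'' eigenspaces occurring there collapse. Two consequences of the hypotheses do all the work. Since $k/\Q$ is algebraic, hence separable, $\Omega^1_{k/\Q}=0$; thus $\Omega^i_k=0$ for $i\ge1$, $H_{dR}^i(k)=0$ for $i>0$, and by the conormal sequence the canonical map $\Omega^p_{A/\Q}\to\Omega^p_{A/k}$ is an isomorphism for every $k$-algebra $A$. In particular the $cdh$-sheaf $\Omega^p=\Omega^p_{-/\Q}$ agrees on $k$-schemes with $\Omega^p_{-/k}$; since the latter satisfies $cdh$-descent on smooth $k$-schemes and vanishes on those of dimension $<p$, while (by resolution of singularities) $\Spec R$ is built in the $cdh$ topology from smooth $k$-schemes of dimension $\le\dim R=2$, we obtain the key vanishing
\[
\Hcdh^q(R,\Omega^p)=0 \qquad\text{whenever } p>\dim R=2 .
\]
Secondly, seminormality gives $R^+=R_{red}=R$, so $\nil(R)=0$ and $d(R^+)=dR\subseteq\Omega^1_R$.

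For (a): by Theorem~\ref{thm:Kbis}(1), $K_1(R)=k^\times\oplus K_1^{(2)}(R)\oplus K_1^{(3)}(R)$. Since $R^+=R$, the subgroup $\Omega^1_R+d(R^+)$ of $\Omega^1_\cdh(R)$ equals $\Omega^1_R$, so $K_1^{(2)}(R)=\Omega^1_\cdh(R)/\Omega^1_R$. And $K_1^{(3)}(R)=\coker\{\Hcdh^1(R,\Omega^1)\map{d}\Hcdh^1(R,\Omega^2)\}$ vanishes: the complex \eqref{dRHn} with $m=1$ is exact by Proposition~\ref{SBI-Homega}, and its next term $\Hcdh^1(R,\Omega^3)$ is $0$ by the displayed fact, so $d$ is onto. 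This gives (a). For (b): by Theorem~\ref{thm:Kbis}(2), $K_2(R)\cong K_2(k)\oplus\tors\Omega^1_R\oplus K_2^{(3)}(R)\oplus K_2^{(4)}(R)$, and the last two summands vanish. Indeed $K_2^{(4)}(R)=\coker\{\Hcdh^1(R,\Omega^2)\to\Hcdh^1(R,\Omega^3)\}=0$ since $\Hcdh^1(R,\Omega^3)=0$, and $K_2^{(3)}(R)=\Omega^2_\cdh(R)/(\Omega^2_R+d\Omega^1_\cdh(R))=0$ because the reduced de Rham complex \eqref{dRH0} is exact and $\Omega^3_\cdh(R)=\Hcdh^0(R,\Omega^3)=0$, forcing $d\colon\Omega^1_\cdh(R)\to\Omega^2_\cdh(R)$ to be surjective. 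This gives (b).

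For (c), fix $n\ge3$ and apply Theorem~\ref{thm:Kbis}(3): $K_n(R)\cong K_n(k)\oplus\bigoplus_{i=2}^{n+2}\tK_n^{(i)}(R)$. The summands with $2\le i\le n-1$ are exactly $\widetilde{HC}_{n-1}^{(i-1)}(R)$, that is, the weight pieces $1,\dots,n-2$ of $\widetilde{HC}_{n-1}(R)$. The summand $i=n+1$ is $\coker\{\Omega^{n-1}_\cdh(R)\to\Omega^n_\cdh(R)/\Omega^n_R\}$, which is $0$ because $\Omega^n_\cdh(R)=\Hcdh^0(R,\Omega^n)=0$ for $n\ge3$; and the summand $i=n+2$ is $\coker\{\Hcdh^1(R,\Omega^n)\to\Hcdh^1(R,\Omega^{n+1})\}=0$ since $\Hcdh^1(R,\Omega^{n+1})=0$. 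Finally the summand $i=n$ is $\tK_n^{(n)}(R)=\tors\Omega^{n-1}_R/d\tors\Omega^{n-2}_R$; as forms over $\Q$ and over $k$ coincide, Lemma~\ref{lem:Omega/dOmega} with $i=n-1\ge2$ identifies it with $\Omega^{n-1}_R/d\Omega^{n-2}_R=HC_{n-1}^{(n-1)}(R)=\widetilde{HC}_{n-1}^{(n-1)}(R)$, the last equality since $HC_{n-1}^{(n-1)}(k)=\Omega^{n-1}_k/d\Omega^{n-2}_k=0$. Because $HC_{n-1}^{(0)}(R)=0$ and $HC_{n-1}^{(j)}(R)=0$ for $j\ge n$ (as $j>n-1>0$), these summands reassemble into $\widetilde{HC}_{n-1}(R)$, which proves (c).

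The only non-routine inputs are the displayed vanishing $\Hcdh^q(R,\Omega^p)=0$ for $p>\dim R$ --- on which every collapse above rests, and whose proof uses the identification $\Omega^p_{-/\Q}\cong\Omega^p_{-/k}$ together with $cdh$-descent for K\"ahler forms on smooth schemes and resolution of singularities --- and Lemma~\ref{lem:Omega/dOmega}, used to turn the torsion eigenspace $\tK_n^{(n)}(R)$ back into the top Hodge piece of $HC_{n-1}(R)$. Everything else is bookkeeping with the eigenspace formulas of Section~\ref{sec:standard}, so I expect no serious obstacle beyond matching the weight indices correctly.
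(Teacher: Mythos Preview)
Your proof is correct and follows essentially the same route as the paper's: both reduce to Theorem~\ref{thm:Kbis}, use Lemma~\ref{lem:Omega/dOmega} to identify $\tK_n^{(n)}(R)$ with $HC_{n-1}^{(n-1)}(R)$, and kill the remaining eigenspaces via the exact de~Rham complexes \eqref{dRH0}--\eqref{dRHn} together with the vanishing of $\Hcdh^q(R,\Omega^p)$ for $p>2$. The paper invokes this last vanishing tacitly (phrasing it as ``$K_n^{(n+1)}(R)$ is a subquotient of $\Omega^{n+1}_\cdh(R)$'' and ``$K_n^{(n+2)}(R)$ is a subgroup of $H_\cdh^1(R,\Omega^{n+2})$''), whereas you state it explicitly and supply the justification via $\Omega^*_{/\Q}\cong\Omega^*_{/k}$ and $cdh$-descent; this is the only real difference.
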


\begin{proof}
These assertions are special cases of Theorem \ref{thm:Kbis}. Using
Lemma \ref{lem:Omega/dOmega} for $n\ge3$ we have
\[
\tK_n^{(n)}(R) \cong \tors\Omega^{n-1}_R/d\tors\Omega^{n-2}_R
	\cong \Omega^{n-1}_R/d\Omega^{n-2}_R = HC_{n-1}^{(n-1)}(R).
\]
By \eqref{dRH0}, $K_n^{(n+1)}(R)$ is a subquotient of
$\Omega^{n+1}_\cdh(R)$ and vanishes for $n\ge2$; by \eqref{dRHn},
$K_n^{(n+2)}(R)$ is a subgroup of $H_\cdh^1(R,\Omega^{n+2})$
and vanishes for $n\ge1$.
\end{proof}

We conclude this section with two classical examples for which
$\Spec(R)$ has a smooth affine $cdh$ cover,
so that $\Omega_\cdh^*$ is easy to determine.

\begin{ex}\label{cusp}
The cusp $R=k[t^2,t^3]$ has $R^+=k[t]$ and
$K_1^{(2)}(R)=\Omega_\cdh^1/d(R^+)=\Omega^1_k$ (cf.\ \cite[12.1]{Kruse}).
The computation of $K_n(R)$ for $n\ge2$ is also easily derived from
Theorem \ref{Kni}, and stated explicitly in \cite[6.7]{GRW}.
\end{ex}

\begin{ex}\label{skew}
The seminormal ring $R=k[x_1,x_2,y_1,y_2]/( \{ x_iy_j : 1\le i, j\le 2 \})$ is the
homogeneous coordinate ring of a pair of skew lines in $\bbP^3$.
Its normalization is $\tR=k[x_1,x_2]\times k[y_1,y_2]$, and
$\Spec(\tR)\to\Spec(R)$ is a $cdh$ cover. It is easy to see that
$H^1_\cdh(R,\Omega^i)=0$, and $\Omega^i_R\to\Omega^i_\cdh(R)$ is
onto for $i\ne0$. Applying Theorem \ref{thm:Kbis}, we see that
$K_0(R)=\Z$, $K_1(R)=k^\times$ and $K_{-1}(R)=0$.
This recovers a classic result of Murthy in \cite{Murthy}.
If $k$ is algebraic over $\Q$ then we also have $\tors\Omega^1_R\cong k^4$
(on the $x_idy_j$), $\tors\Omega^2_R\cong k^4$ (on the $dx_idy_j$)
and $\Omega^3_R=0$, so by Proposition \ref{number-curve} we have
\[
K_2(R)=K_2(k)\oplus k^4,\quad\text{while}\quad
\tK_n(R)=\widetilde{HC}_{n-1}(R)\quad\text{for all $n\ge3$.}
\]
\end{ex}

\section{Affine cones of smooth varieties}\label{sec:cones}

Let $X$ be a smooth projective variety in $\bbP^N$, and let
$R = k \oplus R_1 \oplus R_2 \oplus \cdots$ be the
associated homogeneous coordinate ring.
We will write $L$ for the
pullback to $X$ of the ample bundle $\cO(1)$ on $\bbP^N$, and if
$\cF$ is a quasi-coherent sheaf on $X$, we write
$\cF(t)$ for $\cF \oo_{\cO_X} L^t$.
In this section we compute the
$cdh$ cohomology of $\Spec(R)$ and use it to compute the $K$-theory of $R$,
via Proposition \ref{weight1}. 
The main result is the theorem below,
computing the non-positive $K$-groups of $R$. Later in this section, we
give partial calculations of the positive $K$-groups.

Recall from Proposition \ref{weight1} that $K_{-m}^{(0)}(R)=0$ for all
$m>0$ 
and $\tK_n^{(0)}(R)=0$ for $n\ge0$.
Thus we are interested in $K_{-m}^{(i+1)}(R)$ for $i\ge0$.

\begin{thm} \label{thm:main}
Let $X$ be a smooth projective variety in $\bbP^N$ with
homogeneous coordinate ring $R$. Then
\begin{gather*}
K_0^{(1)}(R) \cong R^+/R =
\bigoplus\nolimits_{t = 1}^\infty H^0(X,\cO_X(t))/R_t, \quad\text{and}\\
K_0^{(i+1)}(R) \cong
\bigoplus\nolimits_{t=1}^\infty H^{i}(X,\Omega_X^{i}(t)),
\quad\text{for all~} i\ge1.
\end{gather*}
For any $m>0$, and all $i\ge0$, we have:
\begin{equation*}
K_{-m}^{(i+1)}(R) \cong
\bigoplus\nolimits_{t=1}^\infty H^{m+i}(X,\Omega_X^{i}(t)).
\end{equation*}

If $k$ has finite transcendence degree over $\Q$ then each
vector space $K_0(R)/\Z$ and $K_{-m}(R)$ is finite-dimensional.
\end{thm}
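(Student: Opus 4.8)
\emph{Proof proposal.} The plan is to reduce everything to the truncated de Rham hypercohomology groups $\bbHcdh^{q}(\Spec R,\Omega^{\le i})$ and then compute those geometrically. By Proposition \ref{weight1} (and, for $n\le0$ with $(n,i)\ne(0,0)$, the identification $\tK_n^{(i)}=K_n^{(i)}$ of Remark \ref{rem:R+}), we have $K_{-m}^{(i+1)}(R)\cong\bbHcdh^{2i+m}(\Spec R,\Omega^{\le i})$ for $m\ge1$, $i\ge0$, and $K_0^{(i+1)}(R)\cong\bbHcdh^{2i}(\Spec R,\Omega^{\le i})$ for $i\ge1$; the one remaining group $K_0^{(1)}(R)=R^+/R$ is handled at once by Remark \ref{rem:R+}, since the punctured cone $\Spec R\setminus\{\frakm_R\}$ is a $\mathbb{G}_m$-torsor over the smooth $X$, hence smooth, so $\tR_t=H^0(X,\cO(t))$ and $R^+=k\oplus\bigoplus_{t\ge1}H^0(X,\cO(t))$, giving $K_0^{(1)}(R)=\bigoplus_{t\ge1}H^0(X,\cO(t))/R_t$. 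So it remains to show $\bbHcdh^{2i+m}(\Spec R,\Omega^{\le i})\cong\bigoplus_{t\ge1}H^{m+i}(X,\Omega_X^i(t))$ for all $i\ge0$, $m\ge0$ with $(i,m)\ne(0,0)$.

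I would compute this using the blow-up $\pi\colon\widetilde W\to\Spec R$ of the cone at its vertex $v=\Spec k$. Since $R$ is reduced and generated in degree one, an inspection of the standard affine charts shows $\widetilde W$ to be smooth: it is the total space of $\cO_X(-1)$, i.e.\ $\widetilde W=\underline{\Spec}_X\Sym_{\cO_X}(\cO_X(1))$, with bundle projection $p\colon\widetilde W\to X$ and zero section the exceptional divisor $E$, and $E=\pi^{-1}(v)=\Proj\bigl(\bigoplus_n\frakm_R^n/\frakm_R^{n+1}\bigr)=\Proj(R)=X$. Then $(E\subseteq\widetilde W)\to(v\subseteq\Spec R)$ is an abstract blow-up square, so $cdh$-descent gives a Mayer--Vietoris long exact sequence computing $\bbHcdh^*(\Spec R,\Omega^{\le i})$ from the corresponding groups of $\widetilde W$, $X$ and $\Spec k$; and on these smooth schemes the $cdh$-cohomology of $\Omega^j$ is just the Zariski cohomology of $\Omega^j_{-/\Q}$, by \cite{chsw}. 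All of this is $\mathbb{G}_m$-equivariant for the grading of $R$, so every term is graded by weight; as $E$ and $\Spec k$ carry the trivial action, in each weight $t\ge1$ the sequence degenerates to an isomorphism $\bbHcdh^n(\Spec R,\Omega^{\le i})_t\cong\bbH^n(\widetilde W,\Omega^{\le i}_{\widetilde W})_t$, while the weight-$0$ part is identified with $\bbH^n(\Spec k,\Omega^{\le i}_k)$, which vanishes in the degrees $n=2i+m$ occurring here.

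It thus remains to compute $\bbH^n(\widetilde W,\Omega^{\le i}_{\widetilde W})_t$ for $t\ge1$. Pushing forward along the affine map $p$ (so $p_*$ is exact and $p_*\cO_{\widetilde W}=\bigoplus_{n\ge0}\cO_X(n)$), the weight-$t$ part of the stupid truncation $\Omega^{\le i}_{\widetilde W}$ becomes a complex $C^{\le i}_t$ of coherent sheaves on $X$ whose term in degree $j$ sits in an extension $0\to\Omega^j_X(t)\to C^j_t\to\Omega^{j-1}_X(t)\to0$, the differential being the de Rham differential of $\widetilde W$ read off from $d(s^t)=t\,s^{t-1}ds$. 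Here characteristic zero enters decisively: since $t$ is a unit, the composite $C^j_t\map{d}C^{j+1}_t\onto\Omega^j_X(t)$ (projection onto the $ds$-component) restricts to multiplication by $\pm t$, hence to an isomorphism, on the subsheaf $\Omega^j_X(t)\subseteq C^j_t$; a short diagram chase then shows that the complex $C^\bu_t$ is acyclic and that its stupid truncation is quasi-isomorphic to a single sheaf, $C^{\le i}_t\simeq\Omega^i_X(t)[-i]$ in $D(X)$. Hence $\bbH^n(\widetilde W,\Omega^{\le i}_{\widetilde W})_t\cong H^{n-i}(X,\Omega^i_X(t))$; taking $n=2i+m$ and summing over $t\ge1$ yields $K_{-m}^{(i+1)}(R)\cong\bigoplus_{t\ge1}H^{m+i}(X,\Omega^i_X(t))$ for $m\ge1$, $i\ge0$, and $K_0^{(i+1)}(R)\cong\bigoplus_{t\ge1}H^{i}(X,\Omega^i_X(t))$ for $i\ge1$. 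I expect this last identification---a twisted Poincar\'e lemma for $p\colon\widetilde W\to X$, pinning down not just the associated graded of $C^{\le i}_t$ but the precise surviving sheaf $\Omega^i_X(t)$ and the cohomological shift by $i$---to be the main obstacle; the remainder is bookkeeping with long exact sequences and weight decompositions.

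Finally, finiteness. If $\Q\subseteq k$ has finite transcendence degree then $\dim_k\Omega^a_{k/\Q}<\infty$ for every $a$, and $\Omega^i_{X/\Q}$ is a finite iterated extension of the coherent sheaves $\Omega^{i-a}_{X/k}\oo_k\Omega^a_{k/\Q}$; since $X$ is proper over $k$, each $H^j(X,\Omega^i_{X/\Q}(t))$ is therefore a finite-dimensional $k$-vector space. Because $\cO_X(1)$ is ample, Serre vanishing gives $H^j(X,\Omega^i_{X/\Q}(t))=0$ for $j\ge1$ and $t\gg0$, so each of the direct sums above---all in cohomological degree $\ge1$---is finite; and for $K_0^{(1)}(R)=\bigoplus_{t\ge1}H^0(X,\cO(t))/R_t$ one uses $H^1(\bbP^N,\mathcal I_X(t))=0$ for $t\gg0$, i.e.\ $R_t=H^0(X,\cO(t))$ eventually. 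Thus $K_0(R)/\Z$ and each $K_{-m}(R)$ is a finite-dimensional $k$-vector space.
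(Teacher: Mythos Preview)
Your argument is correct and uses the same geometric setup as the paper: the blow-up of the cone at its vertex is the total space of $\cO_X(-1)$ over $X$ (Lemma~\ref{bundle}), and the crucial observation that the composition $\Omega^j_X(t)\hookrightarrow (p_*\Omega^j_Y)_t\smap{d}(p_*\Omega^{j+1}_Y)_t\twoheadrightarrow\Omega^j_X(t)$ is multiplication by $\pm t$, hence invertible (this is \eqref{NewLabel} in Lemma~\ref{HzarY}). The difference lies in how this observation is deployed. The paper uses it to split the Zariski cohomology of each individual $\Omega^j_Y$ (Lemma~\ref{HzarY} and Corollary~\ref{split}), and then feeds these into the general formula of Theorem~\ref{thm:hypercohom}, which expresses $\bbH_\cdh^{*}(R,\Omega^{\le i})$ as a cokernel of $d$ on $H_\cdh^*(R,\Omega^\bullet)$; that theorem in turn rests on the abstract $SBI$ splitting of Lemma~\ref{SBI-FHC} and the resulting cdh-exactness of the de Rham complex (Proposition~\ref{SBI-Homega}). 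You instead bypass all of Section~\ref{sec:standard}'s hypercohomology machinery by working directly with the complex: your twisted Poincar\'e lemma shows that the weight-$t$ pushforward $C^{\le i}_t$ is quasi-isomorphic to the single sheaf $\Omega^i_X(t)[-i]$, so the hypercohomology on $Y$ is read off immediately. Your route is therefore more self-contained for the smooth-cone case; the paper's route has the advantage that Theorem~\ref{thm:hypercohom} and the $SBI$ results apply to arbitrary graded algebras, not just cones over smooth varieties, and are reused elsewhere (e.g.\ in Theorems~\ref{Kni} and~\ref{thm:Kbis}). One small point: your appeal to $\mathbb{G}_m$-equivariance to split the Mayer--Vietoris sequence by weight is fine, but the paper's device is simpler---the projection $p\colon Y\to X$ splits the restriction $\bbH^*(Y,\Omega^{\le i})\to\bbH^*(X,\Omega^{\le i})$, so the long exact sequence breaks into short exact pieces as in \eqref{seq:cdh}, with no equivariance needed.
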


A few parts of Theorem \ref{thm:main} are easy to prove.
The formula $K_0^{(1)}(R)= R^+/R$ is given in Proposition \ref{weight1}.
Since $\Spec(R) \setminus \{\frakm_R\}$ is regular, we see from Remark
\ref{rem:R+} that $R^+$ agrees with the normalization $\tR$ of $R$
in degrees $t>0$, and it is well known that
$\tR = \bigoplus_{t=0}^\infty H^0(X,\cO(t))$;
see  \cite[Theorem 7.16]{Iit} and
\cite[Ch VII, \S 2, Remark at the bottom of page 159]{ZS}.
This yields the first display.
The final assertion, when $\text{tr.\,deg.}(k/\Q)<\infty$,
follows from the fact that each $\Omega^i_X$ is a coherent sheaf;
for each $q>0$ the $H^q(X,\Omega^i_X(t))$ are finite-dimensional,
and only finitely many are nonzero,
by Serre's Theorem~B (\cite[III.5.2]{Hart}).

The proof of the rest of the theorem will be given in
Corollary \ref{K-m} and Proposition \ref{weight2},
building upon several intermediate results.

To compute the $cdh$ cohomology of $\Spec(R)$, we will use the blowup
$Y$ of $\Spec(R)$ at the origin (i.e., at $\frakm_R$).
The following description of $Y$ is well known.

\begin{lem}\label{bundle}
The exceptional fiber of $\pi:Y\to\Spec(R)$ is isomorphic to $X$ and there
is a projection $p:Y \to X$ identifying $Y$ with the geometric line bundle
$\Spec_X(\Sym(L))$ over $X$, with sheaf of sections $L^*$. Moreover,
 the inclusion of the exceptional fiber $X$ into $Y$ is the zero
 section of the bundle $p: Y \to X$.
\end{lem}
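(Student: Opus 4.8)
The plan is to identify $Y$ explicitly using the standard presentation of the blowup of an affine cone, and then to recognize the resulting scheme as a line bundle over $X$. First I would recall that, since $R$ is generated in degree $1$ by $R_1 = H^0(\bbP^N,\cO(1))^{\mathrm{image}}$, there is a surjection $S = k[x_0,\dots,x_N] \onto R$ realizing $\Spec(R)$ as a closed subscheme of $\mathbb{A}^{N+1}$, with $\frakm_R$ the image of the irrelevant ideal. The blowup of $\mathbb{A}^{N+1}$ at the origin is the total space of $\cO(-1)$ over $\bbP^N$, and the blowup $Y$ of $\Spec(R)$ at $\frakm_R$ is the strict transform, which sits inside this total space. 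Concretely, $Y = \Proj_R\bigl(\bigoplus_{n\ge0}\frakm_R^n\bigr)$; using that $\frakm_R = R_1 R$ (again because $R$ is generated in degree~$1$) one has $\frakm_R^n = R_{\ge n}$, and the Rees algebra $\bigoplus_n R_{\ge n}$ can be rewritten, after regrading, as $R[z]$ with $z$ in suitable bidegree, so that $Y \cong \Proj_R(R[z]) \cong \mathrm{Spec}_X(\mathrm{Sym}(L))$ — the key point being that the affine chart $z\ne0$ recovers $\Spec(R)$ while the chart at infinity is the exceptional divisor. I would carry this out by passing to the description of $Y$ on the two standard affine charts of the blowup and checking that one chart is $\mathrm{Spec}(R)$ minus the vertex glued appropriately, and the transition functions are exactly those of the geometric line bundle $\mathrm{Spec}_X(\mathrm{Sym}(L))$ whose sheaf of sections is $L^* = \mathcal{H}om(L,\cO_X)$.

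Next, to pin down the exceptional fiber: the exceptional divisor $E = \pi^{-1}(\frakm_R)$ is $\Proj$ of the associated graded ring $\bigoplus_n \frakm_R^n/\frakm_R^{n+1} = \bigoplus_n R_n = R$, so $E \cong \Proj(R) = X$. Under the identification of $Y$ with $\mathrm{Spec}_X(\mathrm{Sym}(L))$, this copy of $X$ is cut out by the vanishing of the tautological section, i.e.\ it is the zero section of $p\colon Y\to X$; this is immediate from the chart computation once one observes that the fiber coordinate is precisely a local generator of the ideal of $E$. Finally, the projection $p\colon Y\to X$ is the structure map of $\mathrm{Spec}_X(\mathrm{Sym}(L))$, and the composite $X \hookrightarrow Y \xrightarrow{p} X$ is the identity, confirming that the exceptional inclusion is a section — necessarily the zero section by the previous sentence.

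The main obstacle, such as it is, is purely bookkeeping: getting the line bundle to come out as the total space of $L^*$ (sheaf of sections $L^*$, equivalently $\mathrm{Spec}_X \mathrm{Sym}(L)$) rather than $L$, since the Proj/Rees construction naturally produces $\mathrm{Sym}$ of the \emph{conormal}-type sheaf and it is easy to be off by a dual. I would resolve this by checking the sign of the grading on the Rees algebra on a single explicit chart — e.g.\ over the locus $x_0\ne0$ in $\bbP^N$ — where everything becomes the elementary statement that blowing up $\mathrm{Spec}(k[x_0,\dots,x_N])$ at the origin produces, along the $x_0$-chart, the ring $k[x_0, x_1/x_0,\dots,x_N/x_0]$, and tracing how $R$ cuts this down. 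Everything else is standard and can be cited from \cite{Hart} (blowups, strict transforms, associated graded) together with the elementary identity $\frakm_R^n = R_{\ge n}$ valid because $R$ is generated in degree $1$.
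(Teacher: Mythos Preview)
Your plan is correct and follows essentially the same route as the paper: identify the exceptional fiber as $\Proj$ of the associated graded ring $\bigoplus \frakm_R^n/\frakm_R^{n+1}=R$, and then verify the line-bundle structure by an affine chart computation on $Y=\Proj(R[\frakm_R t])$. The paper carries out the chart computation directly (showing that over $D_+(x)=\Spec(A)$ in $X$ the open $D_+(xt)\subset Y$ is $\Spec(A[x])$), rather than passing through the ambient blowup of $\mathbb{A}^{N+1}$; your added strict-transform framing and the $\frakm_R^n=R_{\ge n}$ observation are helpful context but not used in the paper's argument. One caution: your heuristic ``the Rees algebra is $R[z]$ after regrading'' is not literally correct as stated and should be replaced by the explicit chart computation you already propose---which is exactly what the paper does.
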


\begin{proof}
The exceptional fiber is $\Proj$ of the Rees algebra
$\bigoplus \frakm^i/\frakm^{i+1}$, which is just $R$, and $X=\Proj(R)$
by construction. For each $x\in R_1$, the affine open
$D_+(x)$ of $X$ is $\Spec(A)$, where $R[1/x]=A[x,1/x]$, and the
line bundle $L^n$ restricts to the $A$-submodule $x^nA$ of $R[1/x]$.

We now consider $Y=\Proj(R[\frakm t])$. For $x\in R_1$, and $xt\in R_1t$,
the affine open $D_+(xt)$ in $Y$ is $\Spec(B)$, where
$R[\frakm t][1/xt]=B[xt,1/xt]$.
The graded map $R\cong\oplus R_it^i\to R[\frakm t]$
induces a projection $Y\to X$ as well as an inclusion of $A[x]$ in $B$.
This is onto, since $B$ is generated by elements of the form
$rt^m/(xt)^m=(r/x^n)x^{n-m}$
for $r\in R_{n}$, $n\ge m$. Hence $B=A[x]$. This shows that $Y$ is the
geometric line bundle over $X$, associated to the locally free sheaf $L$
(see \cite[Ex.\ II.5.18]{Hart}).
\end{proof}

By \cite{chsw} and \cite{chw-v}, we have split exact sequences
\begin{multline}\label{seq:cdh}
0\to H^0_\cdh(R,\cF)\to H^0_\zar(Y,\cF)\oplus\cF(\k) \to H^0_\zar(X,\cF)\to0,\\
0\to  H^m_\cdh(R,\cF) \to H^m_\zar(Y,\cF) \to H^m_\zar(X,\cF)\to0,
\quad\text{ for } m>0,
\end{multline}
when $\cF$ is one of the $cdh$ sheaves $\cO$ or $\Omega^i$, or
a complex of $cdh$ sheaves of the form $\Omega^{\le i}$.
Thus the calculation of $H^*_\cdh(R,\cF)$
is reduced to the calculation of $H^*_\zar(Y,\cF)$.

\begin{lem}\label{MarksLemma}
We have $H^0_\cdh(R,\cO)=R^+$ and
$H^m_\cdh(R,\cO) = \bigoplus_{t=1}^\infty H^m(X,\cO_X(t))$ for $m>0$.
\end{lem}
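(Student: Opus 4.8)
The plan is to compute $H^*_\zar(Y,\cO)$ using the bundle structure from Lemma \ref{bundle} and then feed the answer into the split exact sequences \eqref{seq:cdh}. First I would recall that $\pi\colon Y\to\Spec(R)$ is the blowup at the origin, so $Y=\Spec_X(\Sym(L))$ is the total space of the line bundle with sheaf of sections $L^*$; the projection $p\colon Y\to X$ is affine, and $p_*\cO_Y=\Sym_{\cO_X}(L)=\bigoplus_{t\ge0}\cO_X(t)$. Since $p$ is affine, $H^m_\zar(Y,\cO_Y)=H^m(X,p_*\cO_Y)=\bigoplus_{t\ge0}H^m(X,\cO_X(t))$ for all $m\ge0$.

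Next I would plug this into \eqref{seq:cdh}. For $m>0$ the sequence reads $0\to H^m_\cdh(R,\cO)\to H^m_\zar(Y,\cO)\to H^m_\zar(X,\cO)\to0$. The restriction map $Y\to X$ is the zero section composed with $p$, and on the level of cohomology the map $H^m_\zar(Y,\cO)\to H^m_\zar(X,\cO)$ is the projection onto the $t=0$ summand $H^m(X,\cO_X)$ of $\bigoplus_{t\ge0}H^m(X,\cO_X(t))$ (this is because the zero-section pullback $\Sym(L)\to\cO_X$ kills all positive-degree pieces). Hence the kernel is $\bigoplus_{t\ge1}H^m(X,\cO_X(t))$, which is the asserted formula. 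For $m=0$, the statement $H^0_\cdh(R,\cO)=R^+$ is already recorded in the text (it is \cite[2.5]{chwnk}), so there is nothing new to prove there; one could alternatively note that the first sequence in \eqref{seq:cdh} with $\cF=\cO$ gives $H^0_\cdh(R,\cO)$ as the fiber product of $H^0_\zar(Y,\cO)=\bigoplus_{t\ge0}H^0(X,\cO_X(t))$ and $k$ over $H^0(X,\cO_X)$, which is $k\oplus\bigoplus_{t\ge1}H^0(X,\cO_X(t))=\tR$ in the range where $R$ has nonsingular punctured spectrum, consistent with $R^+$ by Remark \ref{rem:R+}.

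The only genuine point requiring care — and the step I expect to be the main obstacle — is identifying the restriction map $H^m_\zar(Y,\cO)\to H^m_\zar(X,\cO)$ appearing in \eqref{seq:cdh} with the projection onto the degree-zero graded piece. This requires knowing precisely which map $X\to Y$ is used in the abstract blowup square that produces \eqref{seq:cdh}: by Lemma \ref{bundle} it is the zero section of $p\colon Y\to X$, and the composite $X\hookrightarrow Y\xrightarrow{p} X$ is the identity, so the pullback $H^*_\zar(X,\cO)\to H^*_\zar(Y,\cO)\to H^*_\zar(X,\cO)$ is the identity; combined with the fact that $p^*$ is the inclusion of the $t=0$ summand, this forces the restriction to $X$ to be the projection onto that summand. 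Everything else is a routine unwinding of split exact sequences, so once this identification is pinned down the lemma follows immediately.
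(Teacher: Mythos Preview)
Your proposal is correct and follows essentially the same route as the paper: compute $H^*_\zar(Y,\cO)$ via the affine projection $p$ and $p_*\cO_Y=\Sym(L)$, then feed the result into \eqref{seq:cdh}. The paper's proof is terser and does not spell out the identification of the restriction map with projection onto the $t=0$ summand (the step you flagged as the main obstacle); your added justification via the zero section is perfectly fine but not something the authors felt the need to make explicit.
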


\begin{proof}
Since $p$ is affine, $H_\zar^*(Y,\cO_Y)=H_\zar^*(X,p_*\cO_Y)$, and
$p_*\cO_Y=\Sym(L)$ by Lemma \ref{bundle}. Hence
$H^m(Y,\cO)=\bigoplus_{t=0}^\infty H^m_\zar(X,\cO(t))$ for all $m$;
if $m=0$, this equals $R^+$. Now apply  \eqref{seq:cdh}.
\end{proof}

From Proposition \ref{weight1} and \ref{MarksLemma} we deduce the case
$K^{(1)}_*$ of Theorem \ref{thm:main}. For comparison, recall that
$K_0^{(1)}(R)=\Pic(R)$, $K_1^{(1)}(R)=R^\times=k^\times$ and
$K_n^{(1)}(R)=0$ for all $n\ge2$ by Soul\'e \cite{Soule}.

\begin{cor}\label{K-m}
For $m>0$ we have
\[K_{-m}^{(1)}(R)=H^m_\cdh(R,\cO) = \bigoplus_{t=1}^\infty H^m(X,\cO_X(t)).\]
\end{cor}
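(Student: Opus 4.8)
The plan is to combine Proposition \ref{weight1} with Lemma \ref{MarksLemma}; no new geometric input is required, since all the work has already been done. First I would note that for $m>0$ the field $k$ has vanishing negative $K$-theory, so $K_{-m}(R)=\tK_{-m}(R)$ and in particular $K_{-m}^{(1)}(R)=\tK_{-m}^{(1)}(R)$.

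Next I would apply Proposition \ref{weight1} with $n=-m\le 0$ and $i=1$. Because $(n,i)=(-m,1)\ne(0,1)$ when $m>0$, that proposition gives $\tK_{-m}^{(1)}(R)\cong \bbH_\cdh^{2i-n-2}(R,\Omega^{<i})$. Substituting $i=1$ and $n=-m$ turns the cohomological degree into $2\cdot 1-(-m)-2=m$, and the truncated complex $\Omega^{<1}$ is simply the single sheaf $\Omega^0=\cO$ placed in degree $0$, so the hypercohomology $\bbH_\cdh$ collapses to ordinary cohomology: $\tK_{-m}^{(1)}(R)\cong \bbH_\cdh^m(R,\cO)=H_\cdh^m(R,\cO)$.

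Finally I would invoke Lemma \ref{MarksLemma}, which identifies $H_\cdh^m(R,\cO)=\bigoplus_{t=1}^\infty H^m(X,\cO_X(t))$ for $m>0$, yielding the stated formula.

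There is no real obstacle here: the argument is entirely formal, and the only places requiring a moment's care are the bookkeeping of the weight/degree shift in Proposition \ref{weight1} and the observation that $\Omega^{<1}$ degenerates to the structure sheaf. All the genuine content sits upstream — in Proposition \ref{weight1} (proved via the exact sequence \eqref{seq:key} and Theorem \ref{thm:hypercohom}) and in Lemma \ref{MarksLemma} (proved via the line-bundle description of the blowup $Y$ in Lemma \ref{bundle} together with the descent sequence \eqref{seq:cdh}).
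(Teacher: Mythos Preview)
Your proposal is correct and follows exactly the route the paper takes: the corollary is stated immediately after the sentence ``From Proposition \ref{weight1} and \ref{MarksLemma} we deduce the case $K^{(1)}_*$ of Theorem \ref{thm:main},'' and your argument simply spells out that deduction. The only minor inaccuracy is in your parenthetical aside: Proposition \ref{weight1} is proved directly from the sequence \eqref{seq:key} and the vanishing of $HC_n^{(i)}(R)$ in the relevant range, not from Theorem \ref{thm:hypercohom}; but this does not affect the proof itself.
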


\begin{rem}
This clarifies results of Srinivas in \cite[Thm.\ 3]{Sr1}, \cite{Sr2}
and Weibel \cite{WeibelNorm}, which observed (when $X$ is a curve) that the
right side of the display in Corollary \ref{K-m} is an obstruction to
the vanishing of $\tilde{K}_0(R)$ and $K_{-1}(R)$.
\end{rem}

There is an exact sequence
$0 \to p^*\Omega^1_X \to \Omega^1_Y \to \Omega^1_{Y/X} \to 0$
of sheaves on $Y$. The relative sheaf $\Omega^1_{Y/X}$ is the line
bundle $p^*L$, and so we deduce exact sequences for all $i\ge1$:
\[
0 \to p^*\Omega^i_X\to \Omega^i_Y \to p^*(\Omega^{i-1}_X\otimes L) \to0.
\]
Since $p_*p^*\cF=\cF\otimes\Sym(L)$,
applying $p_*$ yields (graded) exact sequences of sheaves on $X$
for all $i \geq 1$:
\begin{equation} \label{pOmegaY}
0 \to \Omega^i_X\oo\Sym(L) \to p_*\Omega^i_Y \to
\Omega^{i-1}_X\oo L\oo\Sym(L) \to 0.
\end{equation}

\begin{lem}\label{HzarY}
The sequence \eqref{pOmegaY} determines a graded split exact sequence
\[ 0 \to
\bigoplus_{t=0}^\infty H_\zar^*(X,\Omega^i_X(t))
\to H_\zar^*(Y,\Omega^i_Y) \to
\bigoplus_{t=1}^\infty H_\zar^*(X,\Omega^{i-1}_X(t)) \to 0
\]
for each $i \geq 1$.
The left-hand map is an isomorphism in degree $0$, and
in degrees $t \geq
1$, its splitting is a consequence of the fact that the
composition
\begin{equation} \label{NewLabel}
H_\zar^*(X,\Omega^{i}_X(t))\to H_\zar^*(Y,\Omega^{i}_Y)
\map{d} H_\zar^*(Y,\Omega^{i+1}_Y) \to H_\zar^*(X,\Omega^{i}_X(t))
\end{equation}
is an isomorphism.
\end{lem}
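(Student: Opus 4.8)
The plan is to feed the short exact sequence \eqref{pOmegaY} of graded sheaves on $X$ into the long exact cohomology sequence, and then produce a splitting of the connecting maps by an explicit geometric argument using the line-bundle structure of $p:Y\to X$. First I would take $H^*_\zar(X,-)$ of \eqref{pOmegaY}; this gives a long exact sequence relating $\bigoplus_t H^*(X,\Omega^i_X(t))$, $H^*(Y,\Omega^i_Y)$ (using that $p$ is affine, so $H^*_\zar(Y,\Omega^i_Y)=H^*_\zar(X,p_*\Omega^i_Y)$, exactly as in Lemma \ref{MarksLemma}), and $\bigoplus_{t}H^*(X,\Omega^{i-1}_X(t)\oo L)=\bigoplus_{t\ge1}H^*(X,\Omega^{i-1}_X(t))$. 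The content is that the connecting homomorphism vanishes, equivalently that the first map admits a splitting; in degree $0$ the sequence \eqref{pOmegaY} is itself split (the degree-$0$ part of $p_*\Omega^i_Y$ is just $\Omega^i_X$, since the zero section $X\into Y$ is a retraction of $p$), which handles the degree-$0$ claim at once.

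For the positive-degree splitting, the key step is to verify that the composite \eqref{NewLabel}
\[
H_\zar^*(X,\Omega^{i}_X(t))\to H_\zar^*(Y,\Omega^{i}_Y)
\map{d} H_\zar^*(Y,\Omega^{i+1}_Y) \to H_\zar^*(X,\Omega^{i}_X(t))
\]
is an isomorphism for $t\ge1$. The first arrow is the inclusion coming from the left-hand term of \eqref{pOmegaY} (i.e.\ from $\Omega^i_X\oo\Sym(L)$, degree-$t$ part); the de Rham differential $d:\Omega^i_Y\to\Omega^{i+1}_Y$ on the line bundle $Y=\Spec_X(\Sym L)$ carries a section $\omega$ of $\Omega^i_X(t)$ (thought of inside $\Omega^i_Y$) to $d\omega$, whose image in the quotient $p_*(\Omega^i_X\oo L\oo\Sym(L))$, degree $t$, picks out the "Euler" term $t\cdot\omega\otimes(\text{vertical generator})$ coming from differentiating the local coordinate $x\in R_1$ of Lemma \ref{bundle}; the last arrow of \eqref{NewLabel} is then the projection onto that $\Omega^i_X(t)$ summand of $p_*\Omega^{i+1}_Y$. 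Since we are in characteristic $0$ and $t\ge1$, multiplication by $t$ is invertible, so the composite is (up to the unit $t$) the identity on $H^*(X,\Omega^i_X(t))$; this is where the hypothesis $t\ge1$ and $\mathrm{char}\,k=0$ is used, and where the whole splitting comes from. Concretely: in the affine chart $D_+(x)$ we have $p_*\Omega^*_Y|_{D_+(x)}=\Omega^*_A\otimes_A A[x]\,\oplus\,\Omega^*_A\otimes_A A[x]\,dx$, and $d(\omega x^t)=(d\omega)x^t+t\,\omega x^{t-1}dx$; the degree-$t$ vertical component is $t\,\omega x^{t-1}dx$, from which the claim is immediate after checking the charts glue.

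The main obstacle is the middle step: making precise the identification of the de Rham differential on $Y$ with the "multiplication by $t$ on the vertical component" in a way that is manifestly compatible with the splitting $p_*\Omega^{i+1}_Y\to\Omega^i_X(t)$ of \eqref{pOmegaY} and independent of the chart $D_+(x)$. The point is that the second map in \eqref{pOmegaY} (restriction to the relative differentials $\Omega^i_{Y/X}=p^*L$) globalizes the local assignment $\eta\mapsto(\text{coefficient of }dx)$, so the composite in \eqref{NewLabel} is globally well-defined and given on the degree-$t$ graded piece by the unit $t$; once this is in place, exactness of the long sequence of \eqref{pOmegaY} plus the existence of a one-sided inverse to its first map forces the connecting maps to vanish and the sequence to split, and splices the global $H^*(Y,\Omega^i_Y)$ out of the two pieces $\bigoplus_{t\ge0}H^*(X,\Omega^i_X(t))$ and $\bigoplus_{t\ge1}H^*(X,\Omega^{i-1}_X(t))$ as claimed.
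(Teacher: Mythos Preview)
Your proposal is correct and follows essentially the same route as the paper: pass from \eqref{pOmegaY} to the long exact cohomology sequence (using that $p$ is affine), then verify \eqref{NewLabel} by the same local computation on $D_+(x)=\Spec(A)$, where the composite is $\omega\otimes x^t\mapsto t\,\omega\otimes x^{t-1}dx$. Your worry about chart-independence is already handled by the fact that the composite is a global map of sheaves (the outer maps come from \eqref{pOmegaY} and $d$ is globally defined), so the local formula merely identifies it; the paper does exactly this without further comment.
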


\begin{proof}
It follows from \eqref{pOmegaY} that we have a (graded) exact sequence
\[ \cdots\smap{\partial}
\bigoplus_{t=0}^\infty H_\zar^*(X,\Omega^i_X(t)) \smap{p^*}
H_\zar^*(Y,\Omega^i_Y) \to
\bigoplus_{t=1}^\infty H_\zar^*(X,\Omega^{i-1}_X(t)) \smap{\partial}\cdots
\]
Therefore, the assertion that \eqref{NewLabel} is an isomorphism
implies the first assertion.
Referring to the maps of \eqref{pOmegaY}, it suffices to show that
the composition
$$
\Omega_X^{i}\otimes\Sym(L) \to p_*\Omega^{i}_Y \map{d}
p_*\Omega^{i+1}_Y \to \Omega^{i}_X\otimes L\otimes\Sym(L)
$$
is the evident graded surjection, with kernel $\Omega_X^{i}$.
But, in the notation of the proof of Lemma \ref{bundle}, it suffices to look
on the affine $D_+(x)=\Spec(A)$ of $X$, and here this is the map
$\Omega^{*}_A\otimes_A A[x] \to \Omega^*_A\otimes_A\Omega^1_{A[x]/A}$
sending $\omega\otimes x^n$ to $\omega\otimes nx^{n-1}\, dx$.
\end{proof}

\begin{subex}\label{ex:dRt}
In particular,
$0\to H^0(X,\Omega^1_X(t))\to H^0(Y,\Omega^1_Y)_t\to R_t\to0$ is exact
for $t\ge1$, and the composition $R_t \map{d}H^0(Y,\Omega^1_Y)_t\to R_t$
is an isomorphism.
\end{subex}

\begin{cor}\label{split}
For $i\ge1$ and $m\ge1$ we have:
\begin{gather*}
\Omega_\cdh^i(R) \cong\Omega^i_k \oplus\bigoplus_{t=1}^\infty
H_\zar^0(X,\Omega^i(t))\oplus H_\zar^0(X,\Omega^{i-1}(t)); \\
H_\cdh^m(R,\Omega^i) \cong \bigoplus_{t=1}^\infty
H_\zar^m(X,\Omega^i(t))\oplus H_\zar^m(X,\Omega^{i-1}(t)).
\end{gather*}
The cokernel of $\Omega_\cdh^{i-1}(R)\smap{d}\Omega_\cdh^i(R)$ is
$\Omega^i_k/d\Omega^{i-1}_k \oplus
\bigoplus\nolimits_{t=1}^\infty H_\zar^0(X,\Omega^i_X(t))$,
and the cokernel of
$H_\cdh^m(R,\Omega^{i-1})\smap{d} H_\cdh^m(R,\Omega^{i})$
is the summand $\bigoplus_{t=1}^\infty H_\zar^m(X,\Omega^{i}_X(t))$.
\end{cor}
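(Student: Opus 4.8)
The plan is to substitute the description of $Y=\Spec_X(\Sym(L))$ from Lemma~\ref{bundle} and the Zariski cohomology computation of Lemma~\ref{HzarY} into the split exact sequences \eqref{seq:cdh}, keeping careful track of the grading induced by that of $R$, and then to read off the cokernel statements from the splitting input \eqref{NewLabel} of Lemma~\ref{HzarY} together with $d^2=0$ (equivalently, the exactness of the $cdh$ de Rham complexes of Proposition~\ref{SBI-Homega}).

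First I would set up the grading. All the terms in \eqref{seq:cdh}, and the sequences of Lemma~\ref{HzarY}, are graded; the exceptional fibre $X=\Proj(R)$ and the origin both carry the trivial action, so $H^*_\zar(X,\Omega^i_X)$ and the term $\cF(\k)=\Omega^i_k$ live in degree~$0$, while $H^*_\zar(Y,\Omega^i_Y)$ is non-negatively graded with degree-$0$ part $H^*_\zar(X,\Omega^i_X)$, via $p^*$. Since $p\circ i=\mathrm{id}_X$ for the zero section $i\colon X\into Y$, the restriction map $H^*_\zar(Y,\Omega^i_Y)\to H^*_\zar(X,\Omega^i_X)$ occurring in \eqref{seq:cdh} is an isomorphism in degree~$0$. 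Hence, for $m>0$, \eqref{seq:cdh} identifies $H^m_\cdh(R,\Omega^i)$ with the positive-degree part of $H^m_\zar(Y,\Omega^i_Y)$, and in degree~$0$ with $m=0$ the sequence reads $0\to\Omega^i_k\to H^0_\zar(X,\Omega^i_X)\oplus\Omega^i_k\to H^0_\zar(X,\Omega^i_X)\to0$, with $\Omega^i_k$ sitting diagonally. Feeding in the decomposition of $H^*_\zar(Y,\Omega^i_Y)$ from Lemma~\ref{HzarY} yields the displayed formulas for $\Omega^i_\cdh(R)=H^0_\cdh(R,\Omega^i)$ and for $H^m_\cdh(R,\Omega^i)$, $m\ge1$.

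For the cokernel statements, note that $d$ preserves the grading, so $\Omega^\bullet_\cdh(R)$ and $H^m_\cdh(R,\Omega^\bullet)$ split, as complexes, into their graded pieces. In degree~$0$, $\Omega^\bullet_\cdh(R)$ is the de Rham complex of $k$ over $\Q$ (the edge map $k\to R$ being a map of de Rham complexes), contributing the summand $\Omega^i_k/d\,\Omega^{i-1}_k$. In positive degrees, $H^\bullet_\cdh(R,\Omega^j)$ fits, via \eqref{pOmegaY} and Lemma~\ref{HzarY}, in a short exact sequence with sub-object $\bigoplus_{t\ge1}H^\bullet_\zar(X,\Omega^j(t))$ and quotient $\bigoplus_{t\ge1}H^\bullet_\zar(X,\Omega^{j-1}(t))$ (split, but not $d$-compatibly). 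The crucial point \eqref{NewLabel} says that the composite of $d\colon H^\bullet_\cdh(R,\Omega^i)\to H^\bullet_\cdh(R,\Omega^{i+1})$ with the quotient map of level $i+1$ restricts to an isomorphism from the sub-object of level $i$, namely $\bigoplus_{t\ge1}H^\bullet_\zar(X,\Omega^i(t))$, onto the quotient of level $i+1$, which is the same group. Choosing splittings and writing $d$ as a block matrix whose lower-left block is this isomorphism at two consecutive levels, a short chase using $d^2=0$ shows that the composite $\bigoplus_{t\ge1}H^\bullet_\zar(X,\Omega^i_X(t))\into H^\bullet_\cdh(R,\Omega^i)\onto\coker(d)$ is an isomorphism, which is the asserted cokernel; combining this with the degree-$0$ contribution gives the statement for $\Omega^\bullet_\cdh(R)$.

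The step I expect to be the main obstacle is this last chase: since the splitting of Lemma~\ref{HzarY} is not $d$-compatible, one must rule out any contribution to $\operatorname{im}(d)$ from the ``non-geometric'' summand $\bigoplus_{t\ge1}H^\bullet_\zar(X,\Omega^{i-1}(t))$. The leverage is \eqref{NewLabel}, which already pins $d$ down as an isomorphism on the ``geometric'' summand; combined with $d^2=0$ this forces the other summand to contribute nothing new to the image. The remaining work is bookkeeping --- keeping the grading straight, and accounting for the lone $\Omega^i_k$ that survives in degree~$0$ and for the $H^m_\cdh(R,\cO)$ sitting at the bottom of the complex \eqref{dRHn}.
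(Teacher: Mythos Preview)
Your proposal is correct and follows essentially the same route as the paper: the decomposition formulas come from feeding Lemma~\ref{HzarY} into the split sequences \eqref{seq:cdh}, and the cokernel identifications come from the splitting datum \eqref{NewLabel} together with the de~Rham relation. The only cosmetic difference is that the paper phrases the cokernel argument as ``induction on $i$ using \eqref{dRH0}, \eqref{dRHn}'' (i.e., using the exactness of the $cdh$ de~Rham complex from Proposition~\ref{SBI-Homega}), whereas your block-matrix chase needs only $d^2=0$ together with \eqref{NewLabel} at two consecutive levels; your parenthetical ``equivalently'' is slightly imprecise, but either input suffices.
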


\begin{proof}
The first assertions follow from Lemma \ref{HzarY} and \eqref{seq:cdh}.
The cokernel assertions follow from this using \eqref{dRH0}, \eqref{dRHn}
and induction on $i$.
\end{proof}

We may now deduce the remaining cases of Theorem \ref{thm:main},
the main theorem of this section. Recall that $K_{-m}^{(1)}(R)$
is $\oplus_t H^m(X,\cO(t))$ by Corollary \ref{K-m}.

\begin{prop}\label{weight2}
For $i\ge 1$, we have
\[
K_{-m}^{(i+1)}(R) \cong \bbH_\cdh^{m+2i}(R,\Omega^{\le i}) \cong
\bigoplus_{t=1}^\infty H^{m+i}(X,\Omega^{i}_X(t)),\qquad m\ge0.
\]
\end{prop}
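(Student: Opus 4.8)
The plan is to combine three ingredients already established: the identification $\tK_{-m}^{(i+1)}(R)\cong\bbH_\cdh^{2(i+1)-(-m)-2}(R,\Omega^{<i+1})=\bbH_\cdh^{m+2i}(R,\Omega^{\le i})$ from Proposition \ref{weight1} (valid since $-m\le 0$ and the exceptional case $(n,i)=(0,1)$ does not arise here, as $m>0$); the evaluation of this hypercohomology group from Theorem \ref{thm:hypercohom}, which with $q=m+i-i=m>0$ gives
\[
\bbH_\cdh^{m+2i}(R,\Omega^{\le i})\cong\coker\bigl\{H_\cdh^m(R,\Omega^{i-1})\map{d}H_\cdh^m(R,\Omega^i)\bigr\};
\]
and finally the last sentence of Corollary \ref{split}, which identifies this very cokernel with the summand $\bigoplus_{t=1}^\infty H_\zar^m(X,\Omega^i_X(t))$. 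So the first step is to assemble the isomorphism $K_{-m}^{(i+1)}(R)\cong\coker\{d\colon H_\cdh^m(R,\Omega^{i-1})\to H_\cdh^m(R,\Omega^i)\}$ by quoting Proposition \ref{weight1} and Theorem \ref{thm:hypercohom}; the second step is to quote the cokernel computation in Corollary \ref{split} to finish.

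One small point to address is the case $i=1$ in applying Corollary \ref{split}: there the cokernel of $d\colon H_\cdh^m(R,\cO)\to H_\cdh^m(R,\Omega^1)$ should be checked directly against Lemma \ref{MarksLemma} and Corollary \ref{split}, since Corollary \ref{split} is stated for $i\ge1$ but its "induction on $i$" uses \eqref{dRHn} starting from $H_\cdh^m(R,\cO)=\bigoplus_t H^m(X,\cO(t))$ (Lemma \ref{MarksLemma}); this base case is exactly the $m>0$ half of \eqref{dRHn} combined with the $i=1$ instance of Lemma \ref{HzarY}, so it is already covered. I would also note explicitly that the finite-transcendence-degree finiteness claim is not part of this Proposition (it was handled in the remarks following Theorem \ref{thm:main}), so nothing further is needed here.

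I do not expect a genuine obstacle: the Proposition is a bookkeeping corollary of Proposition \ref{weight1}, Theorem \ref{thm:hypercohom}, and Corollary \ref{split}. The only thing requiring a moment's care is matching the index shift $2(i+1)-n-2$ from the weight-$(i+1)$ formula in Proposition \ref{weight1} (with $n=-m$) against the superscript $m+2i$ appearing in the statement, and checking that the relevant $q=m$ is positive so that Theorem \ref{thm:hypercohom} returns a cokernel of $d$ rather than $H_{dR}^*(k)$ or $0$ — which holds precisely because $m>0$ and $m+i<\dim R$ is automatically in range whenever the target group is nonzero (and when $m+i\ge\dim R$ all groups in sight vanish, consistent with $H_\zar^{m+i}(X,\Omega^i_X(t))=0$ for $m+i>d=\dim X$). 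Writing this out is a paragraph of quotation, no calculation.

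\begin{proof}[Proof of Proposition \ref{weight2}]
Since $m>0$, the pair $(n,i+1)$ with $n=-m$ is not the exceptional pair $(0,1)$ of Proposition \ref{weight1}, and $n\le0$, so that Proposition gives
\[
K_{-m}^{(i+1)}(R)=\tK_{-m}^{(i+1)}(R)\cong\bbH_\cdh^{2(i+1)+m-2}(R,\Omega^{<i+1})
=\bbH_\cdh^{m+2i}(R,\Omega^{\le i}).
\]
Writing this superscript as $q+i$ with $q=m+i$, no — writing it as $(m+i)+i$, we have $q=m>0$ in the notation of Theorem \ref{thm:hypercohom}, so that Theorem gives
\[
\bbH_\cdh^{m+2i}(R,\Omega^{\le i})\cong\coker\bigl\{H_\cdh^m(R,\Omega^{i-1})\map{d}H_\cdh^m(R,\Omega^i)\bigr\}.
\]
By the last sentence of Corollary \ref{split} (using Lemma \ref{MarksLemma} for the base case $i=1$ of the induction there), this cokernel is the summand $\bigoplus_{t=1}^\infty H_\zar^m(X,\Omega^i_X(t))$. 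Combining the three displays yields the claim.
\end{proof}
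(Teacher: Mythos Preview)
Your overall strategy matches the paper's proof exactly: quote Proposition \ref{weight1} for the first isomorphism, then Theorem \ref{thm:hypercohom} to rewrite the hypercohomology as a cokernel of $d$, then Corollary \ref{split} (which rests on Lemma \ref{HzarY}) to identify that cokernel. However, your index bookkeeping goes wrong at the second step, and the error propagates to give the wrong final group.

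In Theorem \ref{thm:hypercohom} the hypercohomology is written as $\bbH_\cdh^{q+i}(R,\Omega^{\le i})$, and the cokernel lives in $H_\cdh^q$. You need $q+i=m+2i$, so $q=m+i$, not $q=m$; indeed you first wrote ``$q=m+i$'' and then talked yourself out of it. With the correct $q$ one gets
\[
\bbH_\cdh^{m+2i}(R,\Omega^{\le i})\cong\coker\bigl\{H_\cdh^{m+i}(R,\Omega^{i-1})\map{d}H_\cdh^{m+i}(R,\Omega^{i})\bigr\},
\]
and Corollary \ref{split} then yields $\bigoplus_{t\ge1}H_\zar^{m+i}(X,\Omega^i_X(t))$, which is what the Proposition asserts. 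Your displayed cokernel in $H_\cdh^m$ and your conclusion $\bigoplus_t H_\zar^m(X,\Omega^i_X(t))$ do not match the statement.

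Two smaller points. First, the Proposition is stated for $m\ge0$, not $m>0$; with $q=m+i\ge i\ge1$ the ``$q\ge0$'' branch of Theorem \ref{thm:hypercohom} and the ``$m\ge1$'' hypothesis of Corollary \ref{split} are both satisfied even when $m=0$, so there is no need to exclude that case. Second, the reason the exceptional pair $(0,1)$ of Proposition \ref{weight1} is avoided is that the weight here is $i+1\ge2$, not that $m>0$; your justification would leave the case $m=0$ unaddressed.
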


\begin{proof}
The first isomorphism is Proposition \ref{weight1}. 
The second isomorphism is established in Lemma \ref{HzarY},
using the isomorphism
$$
\bbH_\cdh^{m+2i}(R,\Omega^{\le i}) \cong \coker\bigl\{
H_\cdh^{m+i}(R,\Omega^{i-1}) \map{d} H_\cdh^{m+i}(R,\Omega^{i})\bigr\}
$$
of Theorem \ref{thm:hypercohom}.
\end{proof}

The proof of Theorem \ref{thm:main} is now complete.
We next deduce partial information about
the groups $K_n(R)$ for $n\ge1$.

\begin{prop}\label{K12}
Let $X$ be a smooth projective variety in $\bbP^N$ with
homogeneous coordinate ring $R$. 
Then for all $n\ge1$ we have graded isomorphisms:
\begin{gather*}
K_n^{(n+1)}(R) \cong
\coker\left\{
\Omega^n_R/d\Omega^{n-1}_R \to \bigoplus_{t=1}^\infty
H^0(X,\Omega^n_X(t)) \right\}; 
\\
K_n^{(i)}(R) \cong \bigoplus_{t=1}^\infty H^{i-n-1}(X,\Omega^{i-1}_X(t)),
\qquad i\ge n+2.
\end{gather*}
The graded decomposition of 
$K_n^{(n+1)}(R) = \bigoplus_{t=1}^\infty K_n^{(n+1)}(R)_t$ is:
\[
K_n^{(n+1)}(R)_t  \cong \coker\left\{
\left(\Omega^n_R/d\Omega^{n-1}_R\right)_t \to
H^0(X,\Omega^n_X(t)) \right\}.
\]
\end{prop}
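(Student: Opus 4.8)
The plan is to specialize Theorem~\ref{Kni}(c),(d) to the situation at hand and substitute the $cdh$-cohomology computations provided by Corollary~\ref{split}. All of the isomorphisms in Theorem~\ref{Kni} and Corollary~\ref{split} are graded, so once the ungraded statements are established, the graded refinement in the last display follows by extracting degree-$t$ pieces; the substantive work is therefore the ungraded computation.

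For $i\ge n+2$ the argument is immediate. By Theorem~\ref{Kni}(d),
\[
K_n^{(i)}(R)\cong\coker\bigl\{H_\cdh^{i-n-1}(R,\Omega^{i-2})\map{d}H_\cdh^{i-n-1}(R,\Omega^{i-1})\bigr\},
\]
and since $m:=i-n-1\ge1$, the second cokernel assertion of Corollary~\ref{split} (applied with the pair $(\Omega^{i-2},\Omega^{i-1})$) identifies this cokernel with the summand $\bigoplus_{t=1}^\infty H^{i-n-1}(X,\Omega^{i-1}_X(t))$. This gives the second displayed formula.

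For $i=n+1$ I would start from Theorem~\ref{Kni}(c):
\[
K_n^{(n+1)}(R)\cong\coker\bigl\{\Omega_\cdh^{n-1}(R)\map{d}\Omega_\cdh^n(R)/\Omega^n_R\bigr\}
=\Omega_\cdh^n(R)\big/\bigl(d\,\Omega_\cdh^{n-1}(R)+\Omega^n_R\bigr).
\]
First quotient by $d\,\Omega_\cdh^{n-1}(R)$: the first cokernel assertion of Corollary~\ref{split} gives $\Omega_\cdh^n(R)/d\,\Omega_\cdh^{n-1}(R)\cong\Omega^n_k/d\Omega^{n-1}_k\oplus\bigoplus_{t=1}^\infty H^0(X,\Omega^n_X(t))$, so $K_n^{(n+1)}(R)$ is this module modulo the image of $\Omega^n_R$. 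Since $\Omega^\bullet_R\to\Omega^\bullet_\cdh(R)$ is a morphism of de~Rham complexes, $d\Omega^{n-1}_R$ maps into $d\,\Omega_\cdh^{n-1}(R)$, so the induced map factors through $\Omega^n_R/d\Omega^{n-1}_R$ and
\[
K_n^{(n+1)}(R)\cong\coker\Bigl\{\Omega^n_R/d\Omega^{n-1}_R\to\Omega^n_k/d\Omega^{n-1}_k\oplus\textstyle\bigoplus_{t=1}^\infty H^0(X,\Omega^n_X(t))\Bigr\}.
\]

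It remains to cancel the $\Omega^n_k/d\Omega^{n-1}_k$ summand, and this is the one step needing care. The displayed map is graded; in degree $0$ it is a map $\Omega^n_k/d\Omega^{n-1}_k\to\Omega^n_k/d\Omega^{n-1}_k$, and it is the identity because $(\Omega^n_R)_0=\Omega^n_k$ and $\Omega^n_R\to\Omega^n_\cdh(R)$ is induced in degree $0$ by the identity $R_0=k=(R^+)_0$. Hence the cokernel is concentrated in positive degrees, where it coincides with $\coker\{\Omega^n_R/d\Omega^{n-1}_R\to\bigoplus_{t=1}^\infty H^0(X,\Omega^n_X(t))\}$; this is the first formula, and extracting the degree-$t$ components (all maps being graded) gives the last display. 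Apart from this degree-$0$ bookkeeping, every step is a direct substitution into Theorem~\ref{Kni} and Corollary~\ref{split}, so I expect no other obstacle.
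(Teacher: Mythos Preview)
Your proof is correct and follows essentially the same route as the paper: both start from Theorem~\ref{Kni}(c),(d), substitute the cokernel computations of Corollary~\ref{split}, and then observe that the $\Omega^n_k/d\Omega^{n-1}_k$ summand is killed by the image of $\Omega^n_R$ (the paper phrases this as $\Omega^n_k\subset\Omega^n_R$, you phrase it as the degree-$0$ component of the map being the identity---same content). The paper's proof omits the $i\ge n+2$ case entirely as evident, so your explicit treatment there is a small addition, not a difference in approach.
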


\begin{proof}
By Theorem \ref{Kni}(c), 
\begin{align*}
K_n^{(n+1)}(R) & \cong
\Omega^n_\cdh(R)/(\Omega^n_{R}+d\Omega^{n-1}_\cdh(R)) \\
& = \coker\left(\Omega^n_R/d\Omega^{n-1}_R\to \Omega^n_\cdh(R)/d\Omega^{n-1}_\cdh(R)\right).
\end{align*}
Since $\widetilde H_\cdh^0(R,\Omega^n)=\Omega^n_\cdh(R)/\Omega^n_{\k}$ and
$\Omega^n_{\k}\subset\Omega^n_{R}$, we see from
Corollary \ref{split} that this is the cokernel of
$\Omega^n_R/d\Omega^{n-1}_R\to \bigoplus_{t=1}^\infty H^0(X,\Omega^1_X(t))$,
as claimed.
\end{proof}

\begin{rem}\label{rem:LvsO(1)}
When $X=\bbP^r$ is embedded in $\bbP^N$ as a subvariety of degree $d>r$,
our $L^t=\cO_X(t)$ agrees with $\cO_{\bbP^r}(d\!\cdot\!t)$, because it is the
pullback of $\cO_{\bbP^N}(t)$ to $X=\bbP^r$. Similarly, the terms
written as $\Omega_X^i(t)$ in Proposition \ref{K12} should be read as
$\Omega_{\bbP^r}^i\oo\cO_{\bbP^r}(d\!\cdot\!t)$.
\end{rem}

\section{Cones over smooth curves}\label{sec:curvecones}

In this section, we focus on the case when $X$ is a curve (i.e., a
smooth projective variety of dimension one, embedded in $\bbP^N$), and
apply the results of Sections \ref{sec:standard} and \ref{sec:cones}
in this case. Recall from Theorem \ref{thm:main} that $K_{-m}(R)=0$ for
$m>1$.

The simplest case is when $k$ is algebraic over $\Q$. In this case, we know
from Proposition \ref{number-curve} that $\tK_2(R)\cong\tors\Omega^1_R$
and if $n\ge3$ then $\tK_n(R)\cong\widetilde{HC}_{n-1}(R)$. It remains
to describe the situation when $-1\le n\le1$.

\begin{lem}\label{no-omega}
Suppose that $k$ is algebraic over $\Q$ and that $R$ is the homogeneous
coordinate ring of a smooth curve $X$ over $k$. Then
$K_{-1}(R)=\oplus_{t=1}^\infty H^1(X,\cO(t))$, $K_0(R)=\Z\oplus(R^+/R)$ and
$\tK_1(R)=\oplus_{t=1}^\infty H^0(X,\Omega^1_{X/k}(t))/\Omega^1_{R/k}$.
\end{lem}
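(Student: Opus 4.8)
The plan is to extract all three formulas from the general calculations of Sections \ref{sec:standard} and \ref{sec:cones}, specializing to the case $\dim R = 2$ (so $d = 1$) and $k$ algebraic over $\Q$. First, the negative $K$-group: by Corollary \ref{K-m} we have $K_{-1}(R) = K_{-1}^{(1)}(R) = H^1_\cdh(R,\cO) = \bigoplus_{t\ge1} H^1(X,\cO(t))$, using that $K_{-1}^{(i+1)}(R) \cong \bigoplus_t H^{1+i}(X,\Omega^i_X(t))$ vanishes for $i\ge1$ since $X$ has dimension $1$. This is the only case where the weight $i=1$ (i.e.\ the $\cO$-term) survives in negative degrees. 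So $K_{-1}(R)$ is immediate.

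Next, $K_0(R)$. Since $k$ is algebraic over $\Q$, the Kähler differentials $\Omega^1_k$ are zero, so every contribution coming from $\Omega^i_X(t)$ with $i\ge1$ should vanish. Concretely, from Theorem \ref{thm:main} (equivalently Proposition \ref{weight2}) we have $K_0^{(1)}(R) = R^+/R$ and $K_0^{(i+1)}(R) \cong \bigoplus_t H^i(X,\Omega^i_X(t))$ for $i\ge1$; but $\dim X = 1$ forces $i = 1$, giving $K_0^{(2)}(R) \cong \bigoplus_t H^1(X,\Omega^1_X(t))$. Here is where arithmetic enters: I need $H^1(X,\Omega^1_X(t)) = 0$ for all $t\ge1$. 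Since $\Omega^*$ denotes differentials over $\Q$ and $k/\Q$ is algebraic, $\Omega^1_{X} = \Omega^1_{X/k}$ (the sequence $0 \to \Omega^1_k \oo \cO_X \to \Omega^1_X \to \Omega^1_{X/k}\to 0$ has vanishing left term because $\Omega^1_k = 0$ when $k/\Q$ is algebraic), so $\Omega^1_X(t)$ has degree $\ge 2g-2 + t > 2g - 2$ on the curve $X$... more carefully, $\Omega^1_{X/k}(t)$ is a line bundle of degree $(2g-2) + t\cdot\deg L$ (with $\deg L \ge 1$), hence $H^1(X,\Omega^1_{X/k}(t))$ is Serre-dual to $H^0(X,\cO_X(-t))$ which vanishes for $t\ge1$. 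Thus $K_0^{(2)}(R) = 0$ and $K_0(R) = \Z\oplus(R^+/R)$. Alternatively this is just a restatement of Proposition \ref{number-curve}(a) combined with $\Omega^1_\cdh(R)/\Omega^1_R \cong \bigoplus_t H^0(X,\Omega^1_{X/k}(t))/\Omega^1_{R/k}$ pushed through, but I would prefer the direct cohomological argument to keep the computation transparent.

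Finally, $\tK_1(R)$. By Theorem \ref{thm:Kbis}(1) (or Proposition \ref{number-curve}(a), since $R$ is reduced and — because the punctured spectrum of a cone over a smooth variety is regular — one may even reduce to the seminormal case, but this is not needed), we have $K_1(R) = k^\times \oplus K_1^{(2)}(R) \oplus K_1^{(3)}(R)$ with $K_1^{(2)}(R) \cong \Omega^1_\cdh(R)/(\Omega^1_R + d(R^+))$ and $K_1^{(3)}(R) \cong \coker\{H^1_\cdh(R,\Omega^1)\map{d} H^1_\cdh(R,\Omega^2)\}$. The second summand vanishes: by Corollary \ref{split} the cokernel of that $d$ is $\bigoplus_t H^1(X,\Omega^2_X(t))$, and $\Omega^2_X = \Omega^2_{X/k} = 0$ for a curve (using $\Omega^1_k = 0$ again). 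For the first summand, Corollary \ref{split} gives $\Omega^1_\cdh(R) \cong \Omega^1_k \oplus \bigoplus_{t\ge1}\bigl(H^0(X,\Omega^1_X(t)) \oplus H^0(X,\cO_X(t))\bigr)$; with $\Omega^1_k = 0$ and $\Omega^1_X = \Omega^1_{X/k}$. I then need to identify $\Omega^1_R + d(R^+)$ inside this. Since $R^+ = k\oplus\tR_1\oplus\cdots$ with $\tR_t = H^0(X,\cO_X(t))$ (as recalled after Theorem \ref{thm:main}), and by Example \ref{ex:dRt} the composite $R_t \map{d} H^0(Y,\Omega^1_Y)_t \to R_t$ is an isomorphism — more precisely, under the splitting the map $d: (R^+)_t = H^0(X,\cO_X(t)) \to \Omega^1_\cdh(R)_t$ hits the $H^0(X,\cO_X(t))$-summand isomorphically — the quotient by $d(R^+)$ kills exactly the $\bigoplus_t H^0(X,\cO_X(t))$ part, leaving $\bigoplus_t H^0(X,\Omega^1_{X/k}(t))$. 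Dividing further by the image of $\Omega^1_R = \Omega^1_{R/k}$ (which maps into the $\Omega^1$-summand) gives $\tK_1(R) = K_1^{(2)}(R) \cong \bigoplus_t H^0(X,\Omega^1_{X/k}(t))/\Omega^1_{R/k}$. The main obstacle is the bookkeeping in this last step: being careful about how the natural maps $\Omega^1_{R/k}\to\Omega^1_\cdh(R)$ and $d:R^+\to\Omega^1_\cdh(R)$ interact with the splitting of Corollary \ref{split} and Lemma \ref{HzarY}, so that the quotient genuinely collapses to the stated twisted-differentials sum; Example \ref{ex:dRt} is exactly the tool that makes the $d(R^+)$-part come out cleanly.
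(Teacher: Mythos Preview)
Your proposal is correct and follows essentially the same line as the paper's proof: for $K_{-1}$ and $K_0$ you invoke Theorem~\ref{thm:main}/Corollary~\ref{K-m} together with the vanishing of higher $H^*$ on a curve and Serre duality for $H^1(X,\Omega^1_{X/k}(t))$, exactly as the paper does. For $\tK_1$, the paper simply cites Propositions~\ref{number-curve} and~\ref{K12}; the latter already gives $K_1^{(2)}(R)\cong\coker\bigl(\Omega^1_R\to\bigoplus_t H^0(X,\Omega^1_X(t))\bigr)$ and $K_1^{(3)}(R)\cong\bigoplus_t H^1(X,\Omega^2_X(t))=0$, so that with $\Omega^1_k=0$ the claim is immediate. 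You instead start from the Theorem~\ref{thm:Kbis}(1) description $K_1^{(2)}(R)=\Omega^1_\cdh(R)/(\Omega^1_R+d(R^+))$ and unpack it via Corollary~\ref{split} and Example~\ref{ex:dRt}; this is precisely the computation hidden inside the proof of Proposition~\ref{K12}, so the two routes coincide. One small wording fix: you write that $\Omega^1_R$ ``maps into the $\Omega^1$-summand'', but in fact exact forms $dr$ have nonzero projection to the $H^0(X,\cO(t))$-summand; what you actually use (and what is correct) is that after quotienting by $d(R^+)$ only the projection of $\Omega^1_R$ to the $\bigoplus_t H^0(X,\Omega^1_X(t))$-summand matters.
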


\begin{proof}
By Theorem \ref{thm:main}, $K_0^{(i)}(R)=0$ for $i\ge3$ and
$K_{-1}^{(i)}(R)$ is zero for $i\ge2$, while
$K_{-1}^{(1)}(R)$ is the sum of the $H^1(X,\cO(t))$ by \ref{K-m}.
By Serre Duality, $K_0^{(2)}(R)$ is the sum of the
$H^1(X,\Omega^1_{X/\k}(t))=H^0(X,\cO_X(-t)){}^*$,
which are zero for all $t>0$.

The formula for $\tK_1(R)$ is immediate from
Propositions \ref{number-curve} and \ref{K12}.
\end{proof}

\begin{prop}\label{Kunneth-Kni}
Suppose that $R$ is the homogeneous coordinate ring of a smooth curve $X$
over a number field $F$ contained in $k$. Then for $R_k=R\oo_Fk$:
\smallskip

(a) For $i<n$ we have $\tK_n^{(i)}(R_k)\cong
\oplus_{p=0}^i\ \Omega^p_k\oo_F\tK_{n-p}^{(i-p)}(R)$.
\smallskip

(b) For all $n\ge2$, $\tK_n^{(n)}(R_k) \cong \oplus_{p=0}^{n-2}\
\Omega^{p}_k \oo_F \tK_{n-p}^{(n-p)}(R)$.
\smallskip

(c) For all $n\ge1$,
$
K_n^{(n+1)}(R_k) \cong \Omega^{n-1}_k \oo_F K_1^{(2)}(R).
$
\smallskip

(d) For all $n\ge0$, $K_n^{(n+2)}(R_k)\cong
\Omega^{n+1}_k\oo_F K_{-1}(R) \cong \Omega^{n+1}_k\oo_k K_{-1}(R_k)$.
\end{prop}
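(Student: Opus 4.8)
The plan is to compute both sides of each formula using the results of Sections~\ref{sec:standard} and~\ref{sec:cones} together with the twisted Gauss--Manin ``fundamental sequence'' of Lemma~\ref{lem:FundSeq}, and then compare. Two base-change principles are used throughout. First, since $F/\Q$ is finite separable, every invariant taken over $\Q$ agrees with the one taken over $F$: thus $\Omega^q_R=\Omega^q_{R/F}$, $HH_*(R)=HH_*(R/F)$, $HC_*(R)=HC_*(R/F)$, and similarly for $F$-schemes. Second, since $k/F$ is formally smooth, $HH_*(k/F)=\Omega^*_{k/F}=\Omega^*_k$, the mixed complex of $k$ over $\Q$ is formal, and for any smooth $F$-scheme $Z$ the Hodge filtration on $\Omega^*_{Z_k/\Q}$ has graded pieces $\Omega^p_k\otimes_k\Omega^{*-p}_{Z_k/k}$, but this filtration splits only modulo the arithmetic Gauss--Manin connection $\nabla$ of Lemma~\ref{lem:FundSeq}. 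Combining the first principle with the Künneth theorem for Hochschild homology gives $HH_*(R_k/\Q)\cong\bigoplus_p\Omega^p_k\otimes_F HH_{*-p}(R)$, compatibly with the $\lambda$-grading (with $\Omega^p_k$ of weight~$p$). This is the source of the direct-sum formulas in (a) and (b); for (d) the extra terms happen to vanish, while in (c) the failure of a naïve Künneth for cyclic homology---caused exactly by $\nabla$---is what leaves a single term. The fact that $X$ is a curve keeps the Hodge filtration two-step, which is essential.

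For (d), apply Proposition~\ref{K12} with $i=n+2$ to $R_k$ (note $X_k$ is a smooth projective curve over $k$ with homogeneous coordinate ring $R_k$): $K_n^{(n+2)}(R_k)=\bigoplus_{t\ge1}H^1(X_k,\Omega^{n+1}_{X_k}(t))$. The Hodge filtration on $\Omega^{n+1}_{X_k/\Q}$ has graded pieces $\Omega^{n+1}_k\otimes\cO_{X_k}$ and $\Omega^n_k\otimes\Omega^1_{X_k/k}$; taking $H^1$ and using flat base change, the second contributes $\Omega^n_k\otimes_F H^1(X,\Omega^1_X(t))$, which vanishes for $t\ge1$ by Serre duality, and the first contributes $\Omega^{n+1}_k\otimes_F H^1(X,\cO(t))$. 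Summing over $t$ and recalling $K_{-1}(R)=\bigoplus_t H^1(X,\cO(t))$ (Lemma~\ref{no-omega}) and $K_{-1}(R_k)=K_{-1}(R)\otimes_F k$ yields (d). For (c), apply Proposition~\ref{K12} with $i=n+1$ to $R_k$: $K_n^{(n+1)}(R_k)=\coker\{\Omega^n_{R_k}/d\Omega^{n-1}_{R_k}\to\bigoplus_t H^0(X_k,\Omega^n_{X_k}(t))\}$. On the target, Lemma~\ref{lem:FundSeq} gives, for each $t$, an exact sequence $0\to\Omega^n_k\otimes_F H^0(X,\cO(t))\to H^0(X_k,\Omega^n_{X_k}(t))\to\ker\nabla_t\to0$, where $\nabla_t\colon\Omega^{n-1}_k\otimes_F H^0(X,\Omega^1_X(t))\to\Omega^n_k\otimes_F H^1(X,\cO(t))$ is the twisted Gauss--Manin map. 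On the source, $\Omega^*_{R_k/\Q}=\Omega^*_R\otimes_F\Omega^*_k$ as a tensor product of de Rham complexes, and one checks that the image of $d\Omega^{n-1}_{R_k}$ together with the image of $R_k^+=\bigoplus_t H^0(X_k,\cO(t))$ already surjects onto the subobject $\bigoplus_t\Omega^n_k\otimes_F H^0(X,\cO(t))$ of the target; this absorbs the ``$\Omega^n_k\otimes(R^+/R)$'' contribution that a formal Künneth would predict. Hence the cokernel equals $\coker\{\Omega^n_{R_k}/d\Omega^{n-1}_{R_k}\to\bigoplus_t\ker\nabla_t\}$, and the same tensor-decomposition together with $K_1^{(2)}(R)=\coker\{\Omega^1_R/dR\to\bigoplus_t H^0(X,\Omega^1_X(t))\}$ identifies this with $\Omega^{n-1}_k\otimes_F K_1^{(2)}(R)$, proving (c). (For $n=1$, $\Omega^0_k\otimes-$ is the identity and the statement reduces to: $K_1^{(2)}$ commutes with base change to $k$; the $\nabla$-dependence of $K_1(R_k)$ shows up only in $K_1^{(3)}(R_k)$, cf.~\eqref{intro:partial}.)

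For (a) and (b) the weights satisfy $i\le n$, and Theorem~\ref{Kni} expresses $\tK_n^{(i)}(R)$ through $\widetilde{HC}_{n-1}^{(i-1)}(R)$ when $i<n$ and through $\tors\Omega^{n-1}_R/d\tors\Omega^{n-2}_R$ when $i=n$. The plan here is to establish the clean base-change isomorphisms $\widetilde{HC}^{(j)}_m(R_k)\cong\bigoplus_p\Omega^p_k\otimes_F\widetilde{HC}^{(j-p)}_{m-p}(R)$ and $\tors\Omega^m_{R_k}/d\tors\Omega^{m-1}_{R_k}\cong\bigoplus_p\Omega^p_k\otimes_F\bigl(\tors\Omega^{m-p}_R/d\tors\Omega^{m-1-p}_R\bigr)$. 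The first follows from the Hochschild Künneth above by induction on the Hodge weight, using naturality of the SBI sequences of Lemma~\ref{SBI-FHC} and the fact that for $\dim R=2$ the relevant cyclic-homology groups are bounded and torsion, so that the periodicity operator $S$ does not interfere and no Gauss--Manin term enters. The second follows from the description \eqref{tors} of $\tors$ together with base change for $\Omega^*$ and for $\Omega^*_\cdh$, the point again being that these groups are supported at the cone point, where there is no cohomology of $X_k$ to couple to. Feeding these back through Theorem~\ref{Kni} and Theorem~\ref{thm:Kbis} gives (a) and (b); here one uses that $R$ is a domain, so that $\nil(R)=\tK_1^{(1)}(R)=0$ (which is why the sum in (b) stops at $p=n-2$), and that $\tK_0^{(0)}(R)=0$ (so the $p=i$ summand in (a) vanishes).

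The hardest step is the bookkeeping in (c): showing precisely that, in weight $n+1$, the only eigenspace of $R$ surviving base change is $K_1^{(2)}(R)$, i.e.\ that the seminormalization contribution $\Omega^n_k\otimes_F(R^+/R)$ is annihilated. This rests on the non-split Hodge filtration on $\Omega^*_{X_k/\Q}$ and on the surjectivity of the twisted differential out of $R_k^+$, and it is exactly where Lemma~\ref{lem:FundSeq} is indispensable. A secondary difficulty is justifying the clean $\widetilde{HC}$-Künneth used for (a): cyclic homology has no naïve Künneth against an arithmetic (non-smooth) factor, so one must genuinely exploit $\dim R=2$ and the torsion nature of the relevant groups to control the $S$-operator in the inductive step.
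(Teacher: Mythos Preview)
The central point you miss is that the hypothesis---$X$ is defined over the number field $F\subset k$---yields a K\"unneth \emph{splitting} $\Omega^n_{X_k}\cong\bigoplus_{p+q=n}\Omega^p_k\otimes_F\Omega^q_X$ (and similarly for $R_k$ and $Y_k$), not merely a filtration. In particular the twisted Gauss--Manin maps $\nabla_t$ are identically zero here; this is exactly the content of Example~\ref{ex:partial_vanish}. The paper's proof never uses Lemma~\ref{lem:FundSeq} for this proposition. Your proposal treats the Hodge filtration as non-split throughout and ends by calling Lemma~\ref{lem:FundSeq} ``indispensable'' for (c), yet your own identification in (c) of the cokernel with $\Omega^{n-1}_k\otimes_F K_1^{(2)}(R)$ only goes through if $\ker\nabla_t=\Omega^{n-1}_k\otimes_F H^0(X,\Omega^1_X(t))$, i.e.\ if $\nabla_t=0$. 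This is an internal inconsistency: either the filtration splits (and the $\nabla$-apparatus is unnecessary) or it doesn't (and your final step fails). The paper instead decomposes both the source $\Omega^n_{R_k}$ and the target $\bigoplus_t H^0(X_k,\Omega^n_{X_k}(t))$ of Proposition~\ref{K12} along $p+q=n$ and inspects each piece: $q=0$ contributes nothing to the cokernel, $q=1$ gives $\Omega^{n-1}_k\otimes_F K_1^{(2)}(R)$, and $q\ge2$ has zero target since $\dim X=1$.

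For (a), your worry that ``cyclic homology has no na\"ive K\"unneth against an arithmetic factor'' and that one must exploit $\dim R=2$ to control the $S$-operator is unfounded: Kassel's base-change formula \cite[(3.2)]{Kassel} gives $\widetilde{HC}_*(R_k)\cong\Omega^*_k\otimes_F\widetilde{HC}_*(R)$ directly, compatibly with the Hodge decomposition, and Theorem~\ref{Kni}(a) then yields (a) in one line with no dimension hypothesis. Your (d) is essentially the paper's argument. For (b), the paper makes your sketch precise by running a two-diagonal spectral sequence on the K\"unneth decomposition $\tors\Omega^n_{R_k}=\bigoplus_p\Omega^p_k\otimes_F\tors\Omega^{n-p}_R$ and checking explicitly, via \eqref{dRnil}, that $d_1=0$.
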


\begin{proof}
Write $\oo$ for $\oo_F$.
Part (a) is immediate from Theorem \ref{Kni}(a) and Kassel's base change
formula $\widetilde{HC}_*(R_k)\cong\Omega^*_k\oo\widetilde{HC}_*(R)$.
(See \cite[(3.2)]{Kassel}.)

For (b), recall that
$\tK_n^{(n)}(R_k)\cong\tors\Omega^{n-1}_{R_k}/d\tors\Omega^{n-2}_{R_k}$
by Theorem \ref{Kni}(b).  By the K\"unneth formula,
$\tors\Omega^n_{R_k}=\oplus_{p+q=n}\Omega^p_k\oo\tors\Omega^q_R$.
Filtering by $p\ge0$ yields a 2-diagonal spectral sequence computing the
kernel and cokernel of $d:\tors\Omega^{n-1}_{R_k}\to\tors\Omega^n_{R_k}$,
with $E_0^{p,-p}=\Omega^p_k\oo\tors\Omega^{n-p}_R$ and
$E_0^{p,-1-p}=\Omega^p_k\oo\tors\Omega^{n-p-1}_R$. By \eqref{dRnil},
we have $E_1^{p,-p}=\Omega^p_k\oo\tK_{n+1}^{(n+1)}(R)$ and
$E_1^{p,-1-p}=\Omega^p_k\oo d\,\tors\Omega^{n-p-2}_R$. Given
$\alpha$ in $\Omega^p_k$ and $d\tau$ in $d\,\tors\Omega^{n-p-2}_R$,
$d(\alpha\oo d\tau)=d\alpha\oo d\tau=d(d\alpha\oo\tau)$ in
$\tors\Omega^n_{R_k}$, which shows that $d^1=0$ and establishes (b).

By the K\"unneth formula and Proposition \ref{K12},
$K_n^{(n+1)}(R_k)$ is the direct sum over $p+q=n$ of the cokernels
of the maps
\[ \Omega^p_k\oo\Omega^q_R \to \Omega^p_k\oo H^0(Y,\Omega^q_Y)
 \to \Omega^p_k\oo\oplus_t H^0(X,\Omega^q_X(t)).
\]
For $q=0$, the composite is the identity map of $\Omega^n_k\oo R$.
For $q=1$, the composite is $\Omega^{n-1}_k$ tensored with the map
$\Omega^1_R\to\oplus_t H^0(X,\Omega^1_X(t))$ defining $K_1^{(2)}(R)$.
For $q\ge2$, the right side is zero.
This establishes part (c).

Since $K_{-1}(R)\cong H^1(X,\cO(t))$, part (d) is just
Proposition \ref{K12}, together with the K\"unneth formula that
$H^1(X_k,\Omega^n_{X_k}(t))$ is the direct sum of
$\Omega^n_k\oo H^1(X,\cO(t))$ and $\Omega^{n-1}_k\oo H^1(X,\Omega^1_X(t))$,
which is zero for $t>0$ by Serre Duality.
\end{proof}

When $k/\Q$ is transcendental, we will use a variant of
the arithmetic Gauss-Manin connection
$H^1_{dR}(X/k)\to\Omega^1_k\oo H^1_{dR}(X/k)$,
or rather its ($k$-linear) filtered piece
\[
\nabla:H^0(X,\Omega^1_{X/k})\to\Omega^1_k\oo H^1(X,\cO_X)
\]
as described in \cite[Thm.\,2]{KatzOda} and \cite[3.2]{LS}.
When $k=\C$, this can be interpreted in terms of the Hodge filtration as
a map $H^{1,0}(X,\C)\to \Omega^1_{\C/k}\oo H^{0,1}(X,\C)$.

It is known (see \cite{KatzOda}) that $\nabla$ is the cohomology
boundary map associated to the fundamental short exact sequence
$0\to \Omega^1_k\oo\cO_X\to\Omega^1_X\to\Omega^1_{X/k}\to0$.
Twisting this short exact sequence by $\cO(t)$ yields a twisted version
$\nabla_t:H^0(X,\Omega^1_{X/k}(t))\to\Omega^1_k\oo H^1(X,\cO(t))$.
We see from Lemma \ref{HzarY} that the direct sum of the $\nabla_t$ is
a component of the cohomology boundary map associated to
$0\to\Omega^1_k\oo\cO_Y\to\Omega^1_Y\to\Omega^1_{Y/k}\to0$;
it follows that $\oplus\nabla_t$ is $R$-linear.

Since $\Omega^2_{X/k}=0$,
we have fundamental exact sequences for each $i$:
\begin{equation}\label{fund_seq}
0\to\Omega^i_{\k}\oo\cO_X(t)\to\Omega^i_{X}(t)\to
\Omega^{i-1}_{\k}\oo\Omega^1_{X/\k}(t)\to 0.
\end{equation}
The cohomology boundary maps are the $k$-linear homomorphisms
\[
\Omega^{i-1}_k\oo H^0(X,\Omega^1_{X/k}(t))\map{\nabla_t}
\Omega^i_k\oo H^1(X,\cO(t)).
\]
The sum of the $\nabla_t$ is again $R$-linear, as the sum of the
sequences \eqref{fund_seq} is $R$-linear.
Alternatively, we can use the fact that the
arithmetic Gauss-Manin connection can be extended via the usual formula
$\nabla_t(\omega\oo x)=d\omega\oo x + (-1)^{i-1} \omega\land\nabla_t(x)$,
and the first term vanishes because it is in a lower part of
the Hodge filtration.

\begin{lem} \label{lem:FundSeq}
If $X$ is a smooth curve and $i\ge1$, there is a graded exact sequence
of $R$-modules, the sum over $t>0$ of the exact sequences
\begin{multline*}
0 \to \Omega^i_{\k}\oo R_t \to H^0(X,\Omega^i_{X}(t))
\to \Omega^{i-1}_{\k}\oo H^0(X,\Omega^1_{X/k}(t)) \map{\nabla_t} \\
\Omega^i_{\k}\oo H^1(X,\cO_X(t))\to H^1(X,\Omega^i_{X}(t))
\to 0.
\end{multline*}
Moreover, we have the identity
\[
\nabla_t(\omega\otimes x)=\omega\land \nabla_t(x), \qquad \text{for~
  $\omega\in \Omega^{i-1}_\k$ and $x\in H^0(X,\Omega^1_{X/k}(t))$.}
\]
\end{lem}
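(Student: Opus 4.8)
The plan is to obtain the displayed five-term sequence as the long exact sheaf-cohomology sequence of the fundamental exact sequence \eqref{fund_seq}, appropriately truncated, and then to identify the resulting connecting homomorphism with $\nabla_t$. First I would apply $H^*(X,-)$ to \eqref{fund_seq} on the smooth projective curve $X$. Since $\dim X = 1$ we have $H^q(X,-)=0$ for $q\ge 2$, so this long exact sequence has only the six terms $H^0$ and $H^1$ of the three sheaves in \eqref{fund_seq}. Because $\Omega^j_\k$ is a free (hence flat) $\Q$-module and $H^*(X,-)$ commutes with direct sums on the Noetherian scheme $X$, one has $H^q(X,\Omega^j_\k\oo\cF)\cong\Omega^j_\k\oo H^q(X,\cF)$; taking $\cF=\cO_X(t)$ and $\cF=\Omega^1_{X/\k}(t)$ rewrites the six-term sequence in the shape of the lemma, with $H^0(X,\cO_X(t))$ (the group written $R_t$ in the lemma) in the first slot and $H^1(X,\cO_X(t))$ in the fourth. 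It remains to delete the last term $\Omega^{i-1}_\k\oo H^1(X,\Omega^1_{X/\k}(t))$: by Serre duality for $X$ over $\k$, whose dualizing sheaf is $\Omega^1_{X/\k}$, we get $H^1(X,\Omega^1_{X/\k}(t))^\vee\cong H^0(X,\cO_X(-t))$, which vanishes for $t>0$ since $L$ is ample. Thus $\Omega^i_\k\oo H^1(X,\cO_X(t))\onto H^1(X,\Omega^i_X(t))$ and the six-term sequence collapses to the asserted five-term one; summing over $t>0$, its gradedness and $R$-linearity are inherited from those of \eqref{fund_seq}, as remarked just before the lemma.

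Next I would identify the connecting map $\Omega^{i-1}_\k\oo H^0(X,\Omega^1_{X/\k}(t))\to\Omega^i_\k\oo H^1(X,\cO_X(t))$ with $\nabla_t$. For $i=1$ this connecting map is the twisted arithmetic Gauss--Manin connection, by the description in \cite{KatzOda} and \cite{LS} recalled in the paragraph before the lemma. For general $i$, the sequence \eqref{fund_seq} is obtained from the $i=1$ fundamental sequence $0\to\Omega^1_\k\oo\cO_X\to\Omega^1_X\to\Omega^1_{X/\k}\to 0$ by applying $\wedge^i$ over $\cO_X$, twisting by $L^t$, and using $\Omega^{\ge 2}_{X/\k}=0$ to discard all but two graded pieces of $\wedge^i\Omega^1_X$. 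The compatibility of connecting maps with exterior multiplication (the Leibniz rule for $\nabla$) then gives $\nabla_t(\omega\oo x)=d\omega\oo x\pm\omega\wedge\nabla_t(x)$, and the term $d\omega\oo x$ lies in the image of a lower step of the Hodge filtration of $\Omega^i_X(t)$, hence maps to zero in the graded quotient on which the connecting map lives. This leaves the claimed identity $\nabla_t(\omega\oo x)=\omega\wedge\nabla_t(x)$ (up to the sign conventions already fixed before the lemma).

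The long exact sequence and the Serre-duality vanishing are routine; I expect the only delicate point to be making the second step precise --- namely checking that the extension class of \eqref{fund_seq} is the $i$-th exterior power of the $i=1$ Gauss--Manin class and that the ``$d\omega$'' contribution genuinely dies in the associated graded of the Hodge filtration, with the correct sign. Since this is exactly the construction set up in the discussion immediately preceding the lemma (extending $\nabla$ by the Leibniz rule and invoking the Hodge filtration), I anticipate it reduces to invoking that discussion together with \cite{KatzOda}.
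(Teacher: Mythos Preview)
Your first paragraph is exactly the paper's argument: take the long exact cohomology sequence of \eqref{fund_seq}, pull the $\Omega^j_\k$ factors out of cohomology, and kill the sixth term by Serre duality ($H^1(X,\Omega^1_{X/\k}(t))\cong H^0(X,\cO_X(-t))^*=0$ for $t>0$). One small remark: what cohomology actually produces in the first slot is $\Omega^i_\k\oo H^0(X,\cO_X(t))=\Omega^i_\k\oo\tR_t$; the paper's ``$R_t$'' is a mild abuse (or assumes projective normality), not something your argument needs to address.

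For the identification of the connecting map, the paper takes a more concrete route than your filtration/Leibniz argument. It chooses an affine open cover $\cU$, notes that $\check{C}^\bullet(\cU,\Omega^j_\k\oo\cF)=\Omega^j_\k\oo\check{C}^\bullet(\cU,\cF)$, and computes the boundary directly: if $y\in\check{C}^0(\cU,\Omega^1_X(t))$ lifts $x$, then $\omega\wedge y\in\check{C}^0(\cU,\Omega^i_X(t))$ lifts $\omega\oo x$, and since $\omega\in\Omega^{i-1}_\k$ is globally defined one has $\delta(\omega\wedge y)=\omega\wedge\delta(y)$, which represents $\omega\wedge\nabla_t(x)$. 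This sidesteps the point you flag as delicate---there is no $d\omega$ term to dispose of, because the \v{C}ech differential $\delta$ is $\cO_X$-linear and does not touch the $\Omega^\bullet_\k$ factor. Your Leibniz/Hodge-filtration argument is precisely the ``alternative'' the paper sketches in the paragraph preceding the lemma, so it is correct in outline; the \v{C}ech computation simply makes the identity drop out in one line without having to justify how connecting maps interact with exterior powers.
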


\begin{proof}
This is just the cohomology exact sequence for \eqref{fund_seq},
together with Serre Duality, which says that
$H^1(X,\Omega^1_{X/\k}(t))=H^0(X,\cO_X(-t))=0$
for all $t>0$.

To prove that the boundary map is $\nabla$, let $\cU$ be a cover of
$X$ by affine open subschemes and consider the exact sequence of
\v{C}ech complexes associated to \eqref{fund_seq}. We have
$\check{C}(\cU,\Omega^i_{\k}\oo\cO_X(t))=
           \Omega^i_{\k}\oo\check{C}(\cU,\cO_X(t))$
and
\[
\check{C}(\cU,\Omega^{i-1}_{\k}\oo\Omega^1_X(t))=
\Omega^{i-1}_{\k}\oo\check{C}(\cU,\Omega^1_{X/\k}(t)).
\]
Let $\omega\in \Omega^{i-1}_\k$ and $x\in H^0(X,\Omega^1_{X/k}(t))=
H^0\check{C}(\cU, \Omega^1_{X/k}(t))$. If $y\in \check{C}(\cU,\Omega^1_X(t))$
maps to $x$, then $\delta(y)$ is in $\Omega^1_k\oo\check{C}(\cU,\cO(t))$
and represents $\nabla(x)$. Since $\omega\land y$ lifts $\omega\oo x$,
$\nabla(\omega\oo x)$ is the class of $\delta(\omega\land y)$
in $\Omega^i_\k\oo H^1(X,\Omega^1_{X/k})$.
Since $\omega$ is globally defined, we have
$\delta(\omega\land y)=\omega\land\delta(y)$.
\end{proof}

\begin{prop}\label{K02}
If $X$ is a smooth curve, we have graded exact sequences
\begin{multline*}
0\to K_1^{(2)}(R) \to
\frac{\oplus_t H^0(X,\Omega^1_{X/k}(t))}{\text{\rm image~}\Omega^1_{R/k}}
\map{\nabla} \Omega^1_k\oo \left(\oplus_tH^1(X,\cO(t))\right)
	 \to K_0^{(2)}(R) \to 0; \\
0\to K_{n+1}^{(n+2)}(R) \to
\frac{\Omega^n_{\k}\oo\left[\oplus_t H^0(X,\Omega_{X/\k}^1(t))\right]}
			{\text{\rm image~}\Omega^{n+1}_{R}}
\map{\nabla} \Omega^{n+1}_{\k} \oo\left(\oplus_t H^1(X,\cO_X(t)\right)
\\  \to K_n^{(n+2)}(R) \to0, \qquad n\ge1.
\end{multline*}
The direct sums are taken from $t=1$ to $\infty$.
\end{prop}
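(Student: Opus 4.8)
The plan is to build the two asserted exact sequences out of pieces already assembled in Sections~\ref{sec:standard} and~\ref{sec:cones}, using the twisted Gauss-Manin connection identified in Lemma~\ref{lem:FundSeq}. The starting point is the eigenspace formula from Theorem~\ref{Kni}: we have $K_n^{(n+1)}(R)\cong\coker\{\Omega_\cdh^{n-1}(R)\smap{d}\Omega_\cdh^n(R)/\Omega^n_R\}$ and $K_n^{(n+2)}(R)\cong\coker\{H_\cdh^1(R,\Omega^{n})\smap{d}H_\cdh^1(R,\Omega^{n+1})\}$, together with the more detailed description in Proposition~\ref{K12}, which for a curve ($\dim R=2$, so only $H^0$ and $H^1$ contribute) gives $K_n^{(n+1)}(R)\cong\coker\{\Omega^n_R/d\Omega^{n-1}_R\to\oplus_t H^0(X,\Omega^n_X(t))\}$ and $K_n^{(n+2)}(R)\cong\oplus_t H^1(X,\Omega^{n+1}_X(t))$.

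First I would handle the second sequence, the one for $n\ge1$. By Proposition~\ref{K12}, $K_{n+1}^{(n+2)}(R)$ and $K_n^{(n+2)}(R)$ are the summands $\oplus_t H^0(X,\Omega^{n+1}_X(t))\big/(\Omega^{n+1}_R/d\Omega^n_R)$ and $\oplus_t H^1(X,\Omega^{n+1}_X(t))$ respectively. Now I feed these into the graded exact sequence of Lemma~\ref{lem:FundSeq} with $i=n+1$: that sequence reads
\[
0\to\Omega^{n+1}_k\oo R_t\to H^0(X,\Omega^{n+1}_X(t))\to\Omega^n_k\oo H^0(X,\Omega^1_{X/k}(t))\smap{\nabla_t}\Omega^{n+1}_k\oo H^1(X,\cO_X(t))\to H^1(X,\Omega^{n+1}_X(t))\to0.
\]
Summing over $t$ and dividing the first two terms out by the image of $\Omega^{n+1}_R$ (using that $\Omega^{n+1}_k\oo R=\oplus_t\Omega^{n+1}_k\oo R_t$ sits inside $\Omega^{n+1}_R$, so that the quotient by $\Omega^{n+1}_R$ kills the leftmost term and replaces $\oplus_t H^0(X,\Omega^{n+1}_X(t))$ by $K_{n+1}^{(n+2)}(R)$) yields exactly
\[
0\to K_{n+1}^{(n+2)}(R)\to\frac{\Omega^n_k\oo[\oplus_t H^0(X,\Omega^1_{X/k}(t))]}{\mathrm{image}\ \Omega^{n+1}_R}\smap{\nabla}\Omega^{n+1}_k\oo(\oplus_t H^1(X,\cO_X(t)))\to K_n^{(n+2)}(R)\to0.
\]
The one subtle point is checking that the map $\Omega^{n+1}_R\to\oplus_t H^0(X,\Omega^{n+1}_X(t))$ appearing in Proposition~\ref{K12} agrees, after passing through the $\oplus_t\Omega^n_k\oo H^0(X,\Omega^1_{X/k}(t))$ quotient, with the composite coming from Lemma~\ref{lem:FundSeq}; this follows because both are induced by the canonical map $\Omega^{n+1}_R\to H^0(Y,\Omega^{n+1}_Y)$ of Proposition~\ref{K12} and the splitting of Lemma~\ref{HzarY}. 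The $R$-linearity (gradedness) of $\nabla$ is exactly what was established in the paragraph preceding Lemma~\ref{lem:FundSeq} and restated in Lemma~\ref{lem:FundSeq}.

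The first sequence is the case ``$n=0$'' of the same argument but must be treated separately because $K_1^{(2)}(R)$ and $K_0^{(2)}(R)$ are not covered verbatim by Proposition~\ref{K12}. Here I use Theorem~\ref{thm:Kbis}(1), which gives $K_1^{(2)}(R)\cong\Omega^1_\cdh(R)/(\Omega^1_R+d(R^+))$, and Theorem~\ref{thm:main}, which gives $K_0^{(2)}(R)\cong\oplus_t H^1(X,\Omega^1_X(t))$. Using Corollary~\ref{split} and Lemma~\ref{HzarY} one identifies $\Omega^1_\cdh(R)/(\Omega^1_R+dR^+)$ with $\oplus_t H^0(X,\Omega^1_{X/k}(t))\big/\mathrm{image}(\Omega^1_{R/k})$ — the point being that modulo the image of $R^+$ (which maps onto the $\oplus_t H^0(X,\cO_X(t))$ summand appearing in Lemma~\ref{HzarY} via $d$, per Example~\ref{ex:dRt}) only the $\Omega^1_{X/k}$-part survives, and modulo $\Omega^1_R$ the extra $k$-differential summand is $\Omega^1_{R/k}$'s image. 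Then Lemma~\ref{lem:FundSeq} with $i=1$, namely $0\to\Omega^1_k\oo R_t\to H^0(X,\Omega^1_X(t))\to H^0(X,\Omega^1_{X/k}(t))\smap{\nabla_t}\Omega^1_k\oo H^1(X,\cO(t))\to H^1(X,\Omega^1_X(t))\to0$, after the same quotienting, produces the first displayed sequence.

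**The main obstacle** I expect is bookkeeping rather than anything deep: one must carefully track the two ``error'' summands $\Omega^{*}_k\oo R$ and the $k$-relative differential pieces through the identifications coming from Lemma~\ref{HzarY} and Corollary~\ref{split}, and verify that the quotient $\big(\Omega^n_k\oo\oplus_t H^0(X,\Omega^1_{X/k}(t))\big)/\mathrm{image}(\Omega^{n+1}_R)$ is exactly the cokernel of $\Omega^{n+1}_R\to H^0(X,\Omega^{n+1}_X(t))$-modulo-$\Omega^{n+1}_k\oo R$ that Lemma~\ref{lem:FundSeq} feeds into $\nabla$. The genuinely substantive inputs — the decomposition of $K_n^{(i)}(R)$ into these cohomology groups, the exactness of the fundamental sequence, and the $R$-linearity of $\nabla$ — are all already in hand, so the proof is an assembly.
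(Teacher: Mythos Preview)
Your proposal is correct and follows essentially the same route as the paper: feed the identifications $K_{n+1}^{(n+2)}(R)_t\cong H^0(X,\Omega^{n+1}_X(t))/\text{im}(\Omega^{n+1}_R)_t$ and $K_n^{(n+2)}(R)_t\cong H^1(X,\Omega^{n+1}_X(t))$ from Propositions~\ref{K12} and~\ref{weight2} into the exact sequence of Lemma~\ref{lem:FundSeq}, then observe that the leftmost map $\Omega^{n+1}_k\oo R_t\to H^0(X,\Omega^{n+1}_X(t))$ factors through $\Omega^{n+1}_R$ so that quotienting produces the stated sequence. Your separate treatment of the first sequence via Theorem~\ref{thm:Kbis} is unnecessary---Propositions~\ref{K12} (with $n=1$) and~\ref{weight2} (with $m=0$) already cover it, and the appearance of $\Omega^1_{R/k}$ rather than $\Omega^1_R$ is only because the two have the same image in $\oplus_t H^0(X,\Omega^1_{X/k}(t))$.
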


\begin{proof}
This follows from the exact sequence of Lemma \ref{lem:FundSeq}, using
the formulas $K_n^{(n+2)}(R)_t\cong H^1(X,\Omega^{n+1}_X(t))$ and
$K_{n+1}^{(n+2)}(R)_t\cong H^0(X,\Omega^{n+1}_X(t))/
\text{im}\left(\Omega^{n+1}_R\right)_t$ of Propositions
\ref{weight2} and \ref{K12}, once we observe that the first map
of Lemma \ref{lem:FundSeq} factors through $\Omega^{i}_R$.
This is because it is a quotient of
$\Omega^{i}_k\oo R\to \pi_*(\Omega^i_Y)=H^0(Y,\Omega^i_Y)$, which
factors as $\Omega^i_k\oo R\to \Omega^i_R\to\pi_*(\Omega^i_Y)$.
\end{proof}

\begin{ex}\label{ex:partial_vanish}
If $X$ is a curve definable over a number field contained in $k$,
then the Fundamental Sequence \eqref{fund_seq} (with $i=1$ and $t=0$) splits as
$\Omega^1_{X}\cong\Omega^1_{X/\k}\oplus\Omega^1_{\k}\oo\cO_X$, by the
K\"unneth formula. This implies that
$\Omega^i_{X}\cong\left(\Omega^{i-1}_k\oo\Omega^1_{X/\k}\right)
\oplus\left(\Omega^i_{\k}\oo\cO_X\right)$, so the Gauss-Manin connection
$\nabla$ of Lemma \ref{lem:FundSeq} vanishes and therefore:
\begin{gather*}
K_n^{(n+1)}(R)=
\frac{\Omega^{n-1}_{\k}\oo\left[\oplus_t H^0(X,\Omega_{X/\k}^1(t))\right]}
			{\text{\rm image~}\Omega^{n}_{R}},
\quad n\ge1;\\
K_n^{(n+2)}(R)=\Omega^{n+1}_{\k}\oo
\left[\oplus_t H^1(X,\cO_X(t))\right]
\cong \Omega^{n+1}_{\k}\oo K_{-1}(R), \quad n\ge0.
\end{gather*}
\noindent Of course, the formula for $K_n^{(n+1)}(R)$ reduces to that of
Proposition \ref{Kunneth-Kni}(c).

The formula for $K_0^{(2)}(R)$ clarifies the examples given by
Srinivas in \cite{Sr1}. There it was shown that if $X$ is definable over
a number field, then $K_0(R)$ maps onto $\Omega^1_{\k}\oo
H^1(X,\cO_X(1))$ (see page 264).  From this Srinivas deduced that if
$\k=\mathbb C$ and $H^1(X,\cO_X(1))\ne0$ then $\tK_0(R)\ne0$.

The description of $K_0(R)=\Z\oplus K_0^{(2)}(R)$ in this
special case was independently discovered by Krishna and Srinivas
\cite{KSr2}.
\end{ex}

\begin{lem}\label{lem:degree1}
For any graded algebra $R=k\oplus R_1\oplus\cdots$, the
degree~1 part of $\Omega^i_R$ decomposes as
\[
(\Omega^i_R)_1 \cong (R_1\oo\Omega^i_k) \oplus (\Omega^{i-1}_k\oo R_1).
\]
The inclusions of $R_1\oo\Omega^i_k$ and $\Omega^{i-1}_k\oo R_1$ are given by
$r\oo\omega\mapsto r\omega$ and
$\omega\otimes r\mapsto \omega\land dr$, respectively.
\end{lem}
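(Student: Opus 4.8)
The plan is to compute $\Omega^1_R$ in low degrees directly from the presentation of $R$ as a graded $k$-algebra, and then bootstrap to $\Omega^i_R$ by taking exterior powers. First I would observe that since we work over $\Q$ and $R_0=k$, there is a fundamental (split, since $k\to R\to k$) exact sequence of $R$-modules $0\to \Omega^1_k\oo_k R\to\Omega^1_R\to\Omega^1_{R/k}\to 0$ coming from the composite $\Q\to k\to R$; see \cite[9.9.1]{WeibelHA94}. Taking degree~$1$ parts (everything here is graded, with $\Omega^1_k$ in degree~$0$ and $\Omega^1_{R/k}$ generated in degrees $\ge 1$), the subsheaf contributes $R_1\oo\Omega^1_k$ and the quotient contributes $(\Omega^1_{R/k})_1$. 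Now $\Omega^1_{R/k}$ is generated as an $R$-module by $\{dr: r\in R\}$ with $dk=0$, so its degree~$1$ part is spanned by $\{dr: r\in R_1\}$, i.e.\ it is a quotient of $R_1=\Omega^1_k\oo_k\,?$ — more precisely $(\Omega^1_{R/k})_1$ is the image of $R_1\map{d}(\Omega^1_{R/k})_1$, and the only relations in degree~$1$ among the $dr$ are the $k$-linear ones (any relation $\sum d(a_ir_i)=0$ with $a_i\in k$ reduces to $d(\sum a_i r_i)$), so $d:R_1\to(\Omega^1_{R/k})_1$ is an isomorphism. Splicing, this gives the case $i=1$: $(\Omega^1_R)_1\cong (R_1\oo\Omega^1_k)\oplus(\Omega^0_k\oo R_1)$, with the second summand identified with $\{dr:r\in R_1\}$ exactly as claimed (using $\Omega^0_k=k$).

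Next I would pass from $i=1$ to general $i$. Recall $\Omega^i_R=\Lambda^i_R\Omega^1_R$, and a form in degree~$1$ is a sum of wedges $\eta_1\wedge\cdots\wedge\eta_i$ whose total degree is~$1$; since each $\eta_j$ has degree $\ge 0$ and the degree~$0$ part of $\Omega^1_R$ is $\Omega^1_k$ while the lowest positive degree is~$1$, exactly one factor has degree~$1$ (lying in $(\Omega^1_R)_1$) and the rest have degree~$0$ (lying in $\Omega^1_k$). Using the case $i=1$ decomposition of $(\Omega^1_R)_1$, the degree-$1$ factor is either of the form $r\omega_0$ with $r\in R_1,\ \omega_0\in\Omega^1_k$, or of the form $dr$ with $r\in R_1$. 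In the first case $\eta_1\wedge\cdots\wedge\eta_i$ lies in $R_1\cdot\Lambda^i\Omega^1_k=R_1\oo_k\Omega^i_k$; in the second it lies in $dr\wedge\Lambda^{i-1}\Omega^1_k$, which is precisely the image of $\Omega^{i-1}_k\oo_k R_1$ under $\omega\oo r\mapsto \omega\wedge dr$ (up to sign, $dr\wedge\omega=(-1)^{i-1}\omega\wedge dr$). This shows $(\Omega^i_R)_1$ is spanned by the images of the two asserted maps. For the directness of the sum and that each map is injective, I would note that after tensoring with $k$ along $R\onto k$ the first summand dies (as $R_1\mapsto 0$), so it suffices to check the two maps are injective individually: $R_1\oo\Omega^i_k\to\Omega^i_R$ is injective because it is the degree-$1$ part of the split inclusion $R\oo\Omega^i_k=\Lambda^i(\Omega^1_k\oo R)\into\Omega^i_R$ in the Hodge/weight filtration; and $\Omega^{i-1}_k\oo R_1\map{\omega\oo r\mapsto \omega\wedge dr}\Omega^i_R$ is injective because composing with the projection $\Omega^i_R\onto\Omega^i_{R/k}$ and then $d$ is zero, but composing instead with the differential-graded structure one recovers $\omega\oo r$ — concretely, the map $\Omega^{i-1}_{R}\to\Omega^i_R$, $\theta\mapsto d\theta$ restricted to $\Omega^{i-1}_k\oo R_1\subset(\Omega^{i-1}_R)_1$ sends $\omega\oo r$ to $(-1)^{i-1}\omega\wedge dr$, and this is injective because $d:R_1\to(\Omega^1_{R/k})_1$ is an isomorphism and $\Omega^{i-1}_k$ is a flat $k$-module. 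The two images intersect trivially because applying the augmentation $R\to k$ kills the first while fixing the second (the second lands in forms whose image in $\Omega^i_{R/k}$ is zero but which are nonzero — it is the "new" piece).

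The main obstacle will be organizing the direct-sum decomposition cleanly: verifying that the two evident maps into $(\Omega^i_R)_1$ are jointly surjective is a formal grading argument, but proving they are jointly injective requires identifying two complementary natural projections of $(\Omega^i_R)_1$ — one is the augmentation $R\to k$ inducing $(\Omega^i_R)_1\to 0$ on the "$R_1\oo\Omega^i_k$" part, and the complementary one should be read off from the fundamental sequence $0\to\Omega^1_k\oo R\to\Omega^1_R\to\Omega^1_{R/k}\to 0$ and its exterior powers, giving a weight filtration on $\Omega^i_R$ whose associated graded in the relevant range is $\bigoplus_p \Omega^p_k\oo\Omega^{i-p}_{R/k}$. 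In degree~$1$ only $p=i$ (giving $R_1\oo\Omega^i_k$, since $(\Omega^0_{R/k})_1=0$) and $p=i-1$ (giving $\Omega^{i-1}_k\oo(\Omega^1_{R/k})_1=\Omega^{i-1}_k\oo R_1$) survive, which is exactly the asserted decomposition; the identification of the inclusions with $r\oo\omega\mapsto r\omega$ and $\omega\oo r\mapsto\omega\wedge dr$ is then just unwinding the definition of the maps in that filtration.
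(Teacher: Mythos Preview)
Your argument is essentially correct and reaches the same conclusion, but by a different route than the paper. The paper's proof is a one-line reduction: it observes that the surjection from the polynomial ring $S=k[x_1,\dots,x_N]$ (with $N=\dim_k R_1$) onto $R$ induces an isomorphism $(\Omega^i_{S})_1\map{\sim}(\Omega^i_{R})_1$ for every $i$, because the Hochschild presentation $R^{\otimes3}\to R^{\otimes2}\to\Omega^1_{R}$ only involves $R_0,R_1$ in degree~$1$; the decomposition is then read off from the $S$-module splitting $\Omega^1_S\cong(\Omega^1_k\otimes_kS)\oplus\Omega^1_{S/k}$. You instead work intrinsically with $R$, using the weight filtration on $\Omega^i_R$ coming from the fundamental sequence $0\to\Omega^1_k\otimes_kR\to\Omega^1_R\to\Omega^1_{R/k}\to0$ and identifying the two surviving graded pieces in degree~$1$. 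Both approaches are valid; the paper's is shorter and sidesteps the need to verify that the filtration splits, while yours has the virtue of not introducing an auxiliary ring. Two small points to clean up: your parenthetical ``since $(\Omega^0_{R/k})_1=0$'' is a slip (in fact $(\Omega^0_{R/k})_1=R_1$); what you need, and what your grading argument actually gives, is $(\Omega^j_{R/k})_1=0$ for $j\ge2$. And your justification that $d:R_1\to(\Omega^1_{R/k})_1$ is injective (``the only relations are the $k$-linear ones'') is a little loose; a clean way is to note that the composite $R_1\smap{d}(\Omega^1_{R/k})_1\to(\Omega^1_{R/k}\otimes_Rk)_1\cong(\frakm_R/\frakm_R^2)_1=R_1$ is the identity, or simply to invoke the polynomial-ring comparison as the paper does.
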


\begin{proof}
We may suppose for simplicity that $N=\dim(R_1)$ is finite, so that
the polynomial ring $S=k[x_1,\dots,x_N]$ maps to $R$, and
$S\to R$ is an isomorphism in degree~1.
For every subfield $\ell$ of $k$, $\Omega^1_{R/\ell}$ is the cokernel of
the Hochschild boundary $R^{\oo3}\to R\oo_\ell R$; thus
the map $\Omega^1_{S/\ell}\to\Omega^1_{R/\ell}$ is an isomorphism
in degree~1, and therefore so is $\Omega^i_{S/\ell}\to\Omega^i_{R/\ell}$.
Since $\Omega^1_{S}\cong(\Omega^1_{k}\oo S)\oplus\Omega^1_{S/k}$,
it is easy to check that the degree~1 part of $\Omega^i_{S}$ is
$(\Omega^i_k\oo S_1) \oplus  \Omega^{i-1}_k\oo S_1$, via the given formulas.
\end{proof}

\begin{thm}\label{nonvanish}
Let $X$ be a curve of genus $g$, embedded in $\bbP^N$ by a complete
linear system of degree $d>1$.
Assume that the twisted Gauss-Manin connection
$\nabla:H^0(X,\Omega^1_{X/k}(1)) \map{}\Omega^1_{\k}\oo H^1(X,\cO_X(1))$
is zero. Then $K_1^{(2)}(R)_1\cong \k^{d+g-1}\ne0$, and
\[
K_n^{(n+1)}(R)_{1}\cong\Omega^{n-1}_\k\otimes_\Q\k^{d+g-1} \qquad (n\ge 1).
\]
\noindent
In particular, $K_n^{(n+1)}(R)\ne0$ for all $n$ with
$1\le n<\text{tr.\,deg.}(k/\Q)$.
\end{thm}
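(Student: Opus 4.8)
The plan is to reduce everything to a computation in degree~1 and then invoke Proposition~\ref{K12} together with Lemma~\ref{lem:degree1}. First I would record what Proposition~\ref{K12} says in degree~1: $K_n^{(n+1)}(R)_1 \cong \coker\{(\Omega^n_R/d\Omega^{n-1}_R)_1 \to H^0(X,\Omega^n_X(1))\}$. The source of this map only sees the degree-1 part of $\Omega^n_R$, which by Lemma~\ref{lem:degree1} is $(\Omega^n_k \oo R_1) \oplus (\Omega^{n-1}_k \oo R_1)$, and after killing the image of $d\Omega^{n-1}_R$ the first summand is absorbed (it maps to $\Omega^n_k \oo R_1$ via $d$ applied to $\Omega^{n-1}_k \oo R_1 \subseteq \Omega^{n-1}_R$, up to sign), so what remains mapping into $H^0(X,\Omega^n_X(1))$ is essentially $\Omega^{n-1}_k \oo R_1$ via $\omega \oo r \mapsto \omega \wedge dr$.

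Next I would use the vanishing hypothesis on $\nabla$ to identify the target. Since $d>1$ and the linear system is complete, $R_1 = H^0(X,L)=H^0(X,\cO_X(1))$. Lemma~\ref{lem:FundSeq} with $i=n$, $t=1$ gives the exact sequence
\[
0 \to \Omega^n_k \oo R_1 \to H^0(X,\Omega^n_X(1)) \to \Omega^{n-1}_k \oo H^0(X,\Omega^1_{X/k}(1)) \map{\nabla_1} \Omega^n_k \oo H^1(X,\cO_X(1)) \to \cdots,
\]
and the assumption $\nabla=0$ forces $\nabla_1 = 0$ by Lemma~\ref{lem:FundSeq}'s identity $\nabla_1(\omega\oo x)=\omega\wedge\nabla_1(x)$. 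Hence $H^0(X,\Omega^n_X(1)) \cong (\Omega^n_k \oo R_1) \oplus (\Omega^{n-1}_k \oo H^0(X,\Omega^1_{X/k}(1)))$. Matching this against the description of the map from $(\Omega^n_R/d\Omega^{n-1}_R)_1$ above, the $\Omega^n_k \oo R_1$ summand is hit and the cokernel is $\Omega^{n-1}_k \oo H^0(X,\Omega^1_{X/k}(1))$; I need to check that the image of $\Omega^{n-1}_k \oo R_1$ under $\omega\oo r \mapsto \omega\wedge dr$ lands in the $\Omega^n_k\oo R_1$ part (it does, since $dr$ for $r\in R_1$ is an honest section contributing to the Hodge-filtered bottom piece), so it contributes nothing further to the cokernel.

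Then I would compute the dimension: by Riemann--Roch on $X$, $\dim_k H^0(X,\Omega^1_{X/k}(1)) = \deg(\Omega^1_X \oo L) + 1 - g = (2g-2+d) + 1 - g = d+g-1$, and this is nonnegative positive-dimensional since $d>1 \ge 1$ and $g\ge 0$ give $d+g-1\ge d-1\ge 1$. (When $d\ge 2g-1$ the higher cohomology vanishes and this is clean; in general completeness of the linear system together with $\nabla=0$ is what we are given.) Therefore $K_1^{(2)}(R)_1 \cong k^{d+g-1}\ne 0$ and $K_n^{(n+1)}(R)_1 \cong \Omega^{n-1}_k \otimes_\Q k^{d+g-1}$. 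Finally, for the last sentence: if $1\le n < \mathrm{tr.deg.}(k/\Q)$ then $\Omega^{n-1}_k \ne 0$ (the module of $(n-1)$-forms of $k/\Q$ is nonzero as soon as $k$ has at least $n-1$ algebraically independent elements over $\Q$), so $K_n^{(n+1)}(R)_1\ne 0$, hence $K_n^{(n+1)}(R)\ne 0$.

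The main obstacle I anticipate is the bookkeeping in the first step: carefully tracking how $d\Omega^{n-1}_R$ in degree~1 interacts with the two summands of $(\Omega^n_R)_1$ from Lemma~\ref{lem:degree1}, and verifying that the composite $(\Omega^{n-1}_k\oo R_1) \hookrightarrow \Omega^{n-1}_R \to \Omega^n_R \to H^0(X,\Omega^n_X(1))$ really does factor through the sub $\Omega^n_k\oo R_1$ of the target so that it is killed once we quotient. Everything else is Riemann--Roch and the already-established exact sequences, but this identification of the map in degree~1 is where one has to be precise about signs and about which piece of the Hodge filtration the exact differentials $dr$ occupy.
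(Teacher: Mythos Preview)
Your approach is the paper's: compute the cokernel of $(\Omega^n_R)_1\to H^0(X,\Omega^n_X(1))$ by decomposing source (Lemma~\ref{lem:degree1}) and target (Lemma~\ref{lem:FundSeq} with $\nabla=0$). Two remarks.

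First, your claim that ``after killing $d\Omega^{n-1}_R$ the first summand $\Omega^n_k\oo R_1$ is absorbed'' is both unnecessary and wrong. Unnecessary because a cokernel depends only on the image, so you may work with $(\Omega^n_R)_1$ itself rather than its quotient by exact forms. Wrong because $d(r\omega)=r\,d\omega\pm\omega\wedge dr$ contributes only $(d\Omega^{n-1}_k)\oo R_1$ to the first summand, not all of $\Omega^n_k\oo R_1$. Simply drop this step: the first summand $\Omega^n_k\oo R_1$ of $(\Omega^n_R)_1$ maps identically onto the sub $\Omega^n_k\oo R_1$ of $H^0(X,\Omega^n_X(1))$, and that is all you need on that side.

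Second, the step you correctly flag as the main obstacle is exactly where the paper does the work. To show that $\omega\wedge dr$ lands in the sub $\Omega^n_k\oo R_1$ of $H^0(X,\Omega^n_X(1))$ --- equivalently, that its image in $\Omega^{n-1}_k\oo H^0(X,\Omega^1_{X/k}(1))$ vanishes --- the paper inserts an intermediate row $H^0(Y,\Omega^n_Y)_1$ (with $Y$ the blowup at the cone point) and invokes Lemma~\ref{HzarY}: the bottom vertical map $H^0(Y,\Omega^1_{Y/k})_1\to H^0(X,\Omega^1_{X/k}(1))$ is the quotient by $dR_1$, so $dr\mapsto0$. Your phrase ``Hodge-filtered bottom piece'' gestures at this but does not prove it; the factorization through $Y$ is what makes it rigorous. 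Once that is in hand, the three-row diagram has zero right-column composite and identity left-column composite, so the cokernel of the middle is $\Omega^{n-1}_k\oo H^0(X,\Omega^1_{X/k}(1))$.

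A minor point on Riemann--Roch: the formula $\dim H^0(X,\Omega^1_{X/k}(1))=d+g-1$ needs $H^1(X,\Omega^1_{X/k}(1))=0$, which holds for every $d>0$ by Serre duality (it is dual to $H^0(X,\cO(-1))=0$). Your parenthetical suggesting this requires $d\ge2g-1$ or the $\nabla=0$ hypothesis is unwarranted.
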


\begin{proof}
By Proposition \ref{K12}, the degree~1 part of $K_n^{(n+1)}(R)$ is
\[
K_n^{(n+1)}(R)_1=
\coker((\Omega^n_R/d\Omega^{n-1}_R)_1\to H^0(X,\Omega^n_X(1))).
\]
By Lemmas \ref{lem:degree1} and \ref{lem:FundSeq}, and our hypothesis,
we have morphisms of exact sequences
\[
\xymatrix{\quad0\ar[r]&\Omega^n_k\otimes R_1\ar[d]^{\text{id}}\ar[r]
&(\Omega^n_R)_1\ar[r]^{\omega\land dr\mapsto \omega\oo r}\ar[d]&
\Omega^{n-1}_k\oo R_1\ar[r]\ar[d]^{1\oo d}    	& 0\quad\\
\quad0\ar[r]&\Omega^n_k\otimes R_1\ar[d]^{\text{id}}\ar[r]&
H^0(Y,\Omega^n_Y)_1\ar[d]\ar[r]
&\Omega^{n-1}_k\oo H^0(Y,\Omega^1_{Y/k})_1\ar[d]\ar[r]^-{\partial} &\quad\\
\quad0\ar[r]&\Omega^n_k\otimes R_1\ar[r]& H^0(X,\Omega^n_X(1))\ar[r]
&\Omega^{n-1}_k\otimes H^0(X,\Omega^1_{X/k}(1))\ar[r]^-{\nabla}&0\quad}
\]
where the bottom vertical maps are given in Lemma \ref{HzarY} as
the quotients by $dH^0(X,\Omega^{n-1}(1))$ and $\Omega^{n-1}_k\oo dR_1$.
It follows that the right vertical composite is zero.
Hence $K_n^{(n+1)}(R)_1$, which is the cokernel of the middle vertical
composite, is isomorphic to $\Omega^{n-1}_k\oo H^0(X,\Omega^1_{X/k}(1))$.
Finally, $\dim H^0(X,\Omega^1_{X/k}(1)))={d+g-1}$ by Riemann-Roch.
\end{proof}

\begin{ex}\label{ex:partial=0} Here are two cases in which the
hypotheses of Theorem \ref{nonvanish} above are satisfied:
\begin{enumerate}
\item[(a)] $X$ is embedded in $\bbP^N$ by a complete linear system
of degree $d\ge 2g-1$. In this case $\deg(\Omega^1_{X/k}(-1))<0$,
so $H^1(X,\cO(t))=0$ for all $t\ge1$ by Serre duality.
Theorem \ref{nonvanish} improves the result of Srinivas in \cite{SrCone}
that if $d\ge2g+1$ then $\tK_1(R)\ne0$.

\item[(b)] $X$ is definable over a number field contained in $k$.
\end{enumerate}
\end{ex}

\section{$K$-theory of the plane conic}\label{sec:conic}

We conclude with a classical example: $X$ is the plane conic with
homogeneous coordinate ring $R=k[x,y,z]/(z^2-xy)$. This curve is a
degree~2 embedding of $\bbP^1$ in $\bbP^2$; as pointed out in
Remark \ref{rem:LvsO(1)}, our line bundle $\cO_X(t)$ is the usual
$\cO_{\bbP^1}(2t)$.

Murthy observed long ago, in \cite[5.3]{Murthy}, that $K_0(R)=\Z$ and
$K_{-1}(R)=0$; this also follows from our Theorem \ref{thm:main}.
Srinivas proved in \cite{SrCone} that $\tK_1(R)$ surjects onto $k$.
Theorem \ref{thm:conic} below gives a complete calculation of $K_*(R)$,
or rather, $\tK_*(R)=K_*(R)/K_*(k)$.

\begin{lem}\label{lem:conic}
For $R=k[x,y,z]/(z^2-xy)$,
$\Omega^1_{R/k}$ is a torsionfree $R$-module,
\end{lem}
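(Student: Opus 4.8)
The plan is to read off a finite free presentation of $\Omega^1_{R/k}$ from the hypersurface description of $R$ and then play a depth computation against the fact that $\Spec(R)$ is singular only at the vertex. To begin, since $R=k[x,y,z]/(f)$ with $f=z^2-xy$, the conormal sequence of this hypersurface is a short exact sequence of graded $R$-modules
\[
0\to R\map{g} R^3\to \Omega^1_{R/k}\to 0,
\]
where $g$ is multiplication by the vector $df=(-y,-x,2z)$ in the basis $dx,dy,dz$; here $g$ is injective simply because $R$ is a domain and $g\neq 0$, so this is a genuine short exact sequence, not merely a right-exact presentation.

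Next I would use that $R$ is a two-dimensional Cohen--Macaulay ring (it is a hypersurface), so that $\operatorname{depth}_{\frakm}R=2$ for the graded maximal ideal $\frakm=(x,y,z)$ --- for instance $x,y$ is a regular sequence. Applying $H^*_{\frakm}$ to the displayed sequence yields
\[
0=H^0_{\frakm}(R^3)\to H^0_{\frakm}(\Omega^1_{R/k})\to H^1_{\frakm}(R)=0,
\]
so $H^0_{\frakm}(\Omega^1_{R/k})=0$; in other words no nonzero element of $\Omega^1_{R/k}$ is annihilated by a power of $\frakm$ (equivalently, $\operatorname{depth}_{\frakm}\Omega^1_{R/k}\geq 1$).

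Finally I would identify the torsion submodule $T$ of the $R$-module $\Omega^1_{R/k}$ as being supported only at the vertex. This is where the geometry enters: $X=\Proj(R)$ is a smooth conic, so $\Spec(R)$ is regular away from $\frakm$; concretely, $\Spec(R)\setminus\{\frakm\}=D(x)\cup D(y)\cup D(z)$, and the three opens covering it are regular (each is isomorphic to a localization of a polynomial ring over $k$, using $z^2=xy$ to eliminate a variable), so $\Omega^1_{R/k}$ restricts to a locally free sheaf of rank $2$ there. Consequently $T$ vanishes on the punctured spectrum, hence is annihilated by a power of $\frakm$, and therefore $T\subseteq H^0_{\frakm}(\Omega^1_{R/k})=0$. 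This proves that $\Omega^1_{R/k}$ is torsionfree.

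I do not expect a serious obstacle: the only two inputs are the Cohen--Macaulay (depth) property and the isolated singularity of $\Spec(R)$, neither of which alone suffices, and the mild care needed is just that graded and local depth at $\frakm$ agree for finitely generated graded $R$-modules. If a purely homological route is preferred, the same conclusion drops out of the Auslander--Bridger exact sequence $0\to\operatorname{Ext}^1_R(\operatorname{Tr}\Omega^1_{R/k},R)\to\Omega^1_{R/k}\to(\Omega^1_{R/k})^{**}$, since from the presentation above $\operatorname{Tr}\Omega^1_{R/k}=\operatorname{coker}(R^3\map{g^{*}}R)=R/\frakm=k$, and $\operatorname{Ext}^1_R(k,R)=0$ because $\operatorname{depth}_{\frakm}R=2$.
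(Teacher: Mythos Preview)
Your proof is correct, and the underlying idea is essentially the one the paper invokes: for a normal complete intersection with isolated singularity, the presentation $0\to R\to R^3\to\Omega^1_{R/k}\to0$ together with $\operatorname{depth}_{\frakm}R=2$ forces $H^0_{\frakm}(\Omega^1_{R/k})=0$, while regularity away from $\frakm$ confines the torsion to $\frakm$. The paper does not write this out; it simply cites Vasconcelos's theorem (which is precisely this argument in general) as a black box. Your Auslander--Bridger alternative is also fine, and the identification $\operatorname{Tr}\Omega^1_{R/k}\cong R/\frakm$ is correct since $(-y,-x,2z)$ generates $\frakm$ in characteristic zero.

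One thing to be aware of: the paper's proof continues well past the point where torsionfreeness is established. Using torsionfreeness only to get an injection $\Omega^1_{R/k}\hookrightarrow H^0(Y,\Omega^1_{Y/k})$, it then computes the Hilbert functions of both sides and shows that this injection is an isomorphism in every degree $t\ge2$ (both sides have dimension $4t$), with a one-dimensional cokernel in degree~$1$. That additional information, not the bare torsionfreeness, is what is actually used in the proof of Theorem~\ref{thm:conic} to obtain $K_1^{(2)}(R_k)\cong k$. So your argument proves the lemma as stated, but if you intend to feed it into the rest of the section you will still need the Hilbert function comparison.
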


\begin{proof}
As $R$ is a normal complete intersection, a theorem of
Vasconcelos (\cite[2.4]{W80}) says that $\Omega^1_R$ is a torsionfree
$R$-module.  As such, 
it is a graded submodule of $\Omega^1_{R[1/x]}$. From the
factorization $\Spec(R[1/x])\!\to Y\!\to\Spec(R)$, we see that the graded
map $\Omega^1_{R/k}\to H^0(Y,\Omega^1_{Y/k})$ is an injection.
Since $R/k\smap{d}\Omega^1_{R/k}\to H^0(Y,\Omega^1_{Y/k})$ is an
injection with cokernel $\oplus_t H^0(X,\Omega^1_{X/k}(t))$ by
Lemma \ref{HzarY},
we are reduced to comparing the Hilbert functions of both sides.

It is easy to show that $\dim(R_t)=2t+1$ for all $t\ge0$. From the
resolution $0\to R(-2)\map{dF} R(-1)^3\to\Omega^1_{R/k}\to0$,
we compute that $\dim(\Omega^1_{R/k})_t$ is $3$ for $t=1$
and $4t$ for $t\ge2$.
By Riemann-Roch, we have $\dim H^0(X,\Omega^1_{X/k}(t))=2t-1$ for $t>0$.
By Lemma \ref{HzarY}, this yields:
$$
\dim\,H^0(Y,\Omega^1_{Y/k})_t=\dim\,H^0(X,\Omega^1_{X/k}(t))+\dim\,R_t
=(2t-1)+(2t+1)=4t.
$$
This shows that $(\Omega^1_{R/k})_t\cong R_t\oplus H^0(X,\Omega^1_{X/k}(t))$
when $t\ge2$, as desired.
\end{proof}
\begin{subremark}\label{rem:2-form}
Since $\Omega^1_R$ is torsionfree, the exact sequence \eqref{dRnil}
shows that $d:\tors\Omega^2_{R/k}\cong\Omega^3_{R/k}\cong k$.
In fact, the 2-form $\tau=z\,dx\land dy+2y\,dx\land dz$ has
$x\tau=y\tau=z\tau=0$ and $d\tau=dx\land dy\land dz$.
\end{subremark}

\begin{lem}\label{lem:HCconic}
For $R=\Q[x,y,z]/(z^2-xy)$ and $n\ge2$, $\widetilde{HC}_n^{(i)}(R)$ is
$\Q$ if $n=2i-2$ and zero otherwise. For $R_k=R\oo k$,
$\widetilde{HC}_n^{(i)}(R_k)$ is $\Omega^p_k$, where $p=2i-n-2$.
\end{lem}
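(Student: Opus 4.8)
\emph{Reduction to $k=\Q$.} By Kassel's base-change formula $\widetilde{HC}_*(R_k)\cong\Omega^*_k\oo\widetilde{HC}_*(R)$, compatible with the $\lambda$-decomposition exactly as in the proof of Proposition~\ref{Kunneth-Kni}(a), one has $\widetilde{HC}_n^{(i)}(R_k)\cong\bigoplus_p\Omega^p_k\oo\widetilde{HC}_{n-p}^{(i-p)}(R)$; granting the statement for $R=\Q[x,y,z]/(z^2-xy)$, the only summand that survives is the one with $n-p=2(i-p)-2$, i.e.\ $p=2i-n-2$, which is $\Omega^{2i-n-2}_k$. So it suffices to treat $k=\Q$, and then --- since $HH_*(\Q)$ and $HC_*(\Q)$ sit in internal weight $0$ --- passing to reduced theories only discards the weight-$0$ part, so $\widetilde{HC}_n^{(i)}(R)=HC_n^{(i)}(R)$ for $n\ge1$.

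\emph{Cyclic to Hochschild; the top weight.} Because $R$ is positively graded with $R_0=\Q$, the periodicity operator $S$ vanishes on reduced cyclic homology; equivalently $\widetilde{HH}_n(R)\cong\widetilde{HC}_n(R)\oplus\widetilde{HC}_{n-1}(R)$ (well known for graded algebras --- it is the statement for $R$ itself of what Lemma~\ref{SBI-FHC} proves for the fiber $\cF_{HC}$). Thus the $SBI$ sequence splits, in each Hodge weight, into short exact sequences $0\to\widetilde{HC}_{n-1}^{(i-1)}(R)\map{B}\widetilde{HH}_n^{(i)}(R)\map{I}\widetilde{HC}_n^{(i)}(R)\to0$, so all the $\widetilde{HC}$'s are determined by the $\widetilde{HH}$'s together with the maps $B$. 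In particular $\widetilde{HC}_n^{(n)}(R)=\Omega^n_R/d\Omega^{n-1}_R$; and since $R$ is graded, contraction with the Euler vector field is a contracting homotopy in every positive internal degree for the de Rham complex $0\to\Q\to R\to\Omega^1_R\to\Omega^2_R\to\Omega^3_R\to0$, so $H^*_{dR}(R)=\Q$. Using $\Omega^3_R\cong\Q$ and $\Omega^{\ge4}_R=0$ (an easy direct computation; cf.\ Remark~\ref{rem:2-form}), this gives $\widetilde{HC}_2^{(2)}(R)=\Q$, $\widetilde{HC}_n^{(n)}(R)=0$ for $n\ge3$, and moreover that $B\colon\widetilde{HC}_2^{(2)}(R)\to\widetilde{HH}_3^{(3)}(R)=\Omega^3_R$ is the de Rham differential, an isomorphism $\Q\to\Q$.

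\emph{Hochschild homology of the hypersurface; reassembly.} Compute $\widetilde{HH}_n^{(i)}(R)=H_{n-i}(L\Lambda^i\mathbb{L}_{R/\Q})$ from the cotangent complex $\mathbb{L}_{R/\Q}\simeq[R\map{df}\Omega^1_{S/\Q}\oo_S R]$, with $S=\Q[x,y,z]$ and $f=z^2-xy$. For $i\le2$, $L\Lambda^i\mathbb{L}_{R/\Q}$ has homology only $\Omega^i_R$ in degree $0$ (so $\widetilde{HH}_0^{(0)}=\frakm_R$, $\widetilde{HH}_1^{(1)}=\Omega^1_R$, $\widetilde{HH}_2^{(2)}=\Omega^2_R$); for $i\ge3$ it is the four-term complex $(\Omega^\bullet_{S/\Q}\oo_S R,\ df\wedge\,-)$ placed in homological degrees $[i-3,i]$. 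As $\Spec R$ has an isolated singularity with Jacobian algebra $S/(f,\partial_x f,\partial_y f,\partial_z f)=S/(x,y,z)=\Q$, a short Koszul calculation --- using that $x,y$ is a regular sequence in $R$ with $R/(x,y)\cong\Q[z]/(z^2)$ --- shows $(\Omega^\bullet_{S/\Q}\oo_S R,\ df\wedge\,-)$ has cohomology $\Q$ in degrees $2$ and $3$ and is acyclic elsewhere; hence $\widetilde{HH}_n^{(i)}(R)=\Q$ for $i\ge3$ and $n\in\{2i-3,\,2i-2\}$, and $=0$ otherwise. Feeding this into the $SBI$ sequences and inducting on the weight: in weight $i\ge3$ the class $\widetilde{HH}_{2i-3}^{(i)}=\Q$ is hit isomorphically by $B$ from the surviving class $\widetilde{HC}_{2i-4}^{(i-1)}=\widetilde{HH}_{2i-4}^{(i-1)}=\Q$, so $\widetilde{HC}_{2i-3}^{(i)}(R)=0$, while $\widetilde{HH}_{2i-2}^{(i)}=\Q$ receives nothing and survives, so $\widetilde{HC}_{2i-2}^{(i)}(R)=\Q$; the remaining $\widetilde{HH}_n^{(i)}$ vanish, whence $\widetilde{HC}_n^{(i)}(R)=0$ for the other $n\ge2$. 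Together with $\widetilde{HC}_2^{(2)}(R)=\Q$ this is the assertion for $R$, and the $R_k$ case follows from the first step.

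\emph{Main obstacle.} The one thing that genuinely has to be checked is the induction: that for every $i\ge3$ the connecting map $B\colon\widetilde{HC}_{2i-4}^{(i-1)}(R)\to\widetilde{HH}_{2i-3}^{(i)}(R)$ is an isomorphism of one-dimensional $\Q$-spaces --- i.e.\ that the ``odd'' Hochschild class in degree $2i-3$ never persists into cyclic homology. The case $i=3$ is the de Rham isomorphism noted above; for $i\ge4$ one can either follow explicit generators (the classes of $dx\wedge dy\wedge dz$ and of $z\,dx\wedge dy-2y\,dx\wedge dz$ at successive index levels) through the Koszul--de Rham complex and check that $B$ matches them up, or, more cleanly, identify $\widetilde{HC}_n^{(i)}(R)$ with the homology of the Hodge-truncated derived de Rham complex of $R$ over $\Q$ and read the answer off there. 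That the Milnor number of the singularity equals $1$ is what keeps every group in sight at most one-dimensional and makes this bookkeeping feasible.
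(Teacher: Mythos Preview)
Your argument is correct and essentially self-contained, but it takes a markedly different route from the paper. The paper's proof is a two-line citation: it invokes Michler's computation of the Hodge components of cyclic homology for quasi-homogeneous hypersurfaces, feeding in only the facts that $\Omega^3_R\cong\Q$ and that the augmented de~Rham complex $\Q\to\Omega^*_R$ is exact; the base-change statement then follows from Kassel exactly as you do it. You instead rederive Michler's result for this particular $R$ from scratch, computing $\widetilde{HH}^{(i)}_*$ via the derived exterior powers of the cotangent complex (equivalently, the Koszul complex on the partials of $z^2-xy$) and then running the split SBI sequence weight by weight. This buys independence from the reference at the cost of length; both approaches rest on the same underlying inputs ($\Omega^3_R\cong\Q$, de~Rham exactness for graded rings, and the Jacobian algebra being $\Q$).

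One remark on your ``main obstacle'': it is not actually an obstacle. Once you have established the split short exact sequence
\[
0\to\widetilde{HC}_{n-1}^{(i-1)}(R)\map{B}\widetilde{HH}_n^{(i)}(R)\map{I}\widetilde{HC}_n^{(i)}(R)\to0,
\]
the map $B$ is injective by exactness. At the step $n=2i-3$ you already know by induction that the source $\widetilde{HC}_{2i-4}^{(i-1)}(R)$ is $\Q$, and you have computed that the target $\widetilde{HH}_{2i-3}^{(i)}(R)$ is $\Q$; an injection $\Q\hookrightarrow\Q$ is automatically an isomorphism, so $\widetilde{HC}_{2i-3}^{(i)}(R)=0$ follows with no further checking. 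The explicit tracking of generators you sketch, and the alternative via the derived de~Rham complex, are therefore unnecessary --- the dimension count alone closes the induction.
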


\begin{proof}
The calculation of $HC_n^{(i)}(R)$ is taken from \cite[Thms.\,2--3]{Michler},
using the elementary calculation that $\Omega^3_R\cong\Q$ for $n>3$ and
exactness of the augmented Poincar\'e complex $\Q\to\Omega^*_R$ for $n=2,3$.
The second sentence follows using the base change formula of
\cite[(3.2)]{Kassel}.
\end{proof}

\begin{thm}\label{thm:conic}
For $R_k=k[x,y,z]/(z^2-xy)$ and all $n$, we have
\[
\tK_n(R_k) \cong \Omega^{n-1}_k\oplus\Omega^{n-3}_k\oplus\Omega^{n-5}_k
\oplus\cdots.
\]
In particular, $K_1(R_k)\cong K_1(k)\oplus k\quad\text{and}\quad
K_2(R_k)\cong K_2(k)\oplus\Omega^1_k.$
\end{thm}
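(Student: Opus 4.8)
The plan is to assemble the eigenspace pieces $\tK_n^{(i)}(R_k)$ from the results already available, using the special features of the conic established in Lemmas \ref{lem:conic} and \ref{lem:HCconic} and Remark \ref{rem:2-form}. Since $\dim R = 2$ and $R$ is reduced, Theorem \ref{thm:Kbis} tells us that $\tK_n(R_k)$ splits into eigenspaces indexed by $i$ with $2\le i\le n+2$ (for $n\ge1$), and Theorem \ref{thm:main} combined with Propositions \ref{weight1} and \ref{weight2} handles $n\le0$. First I would dispose of the negative and zero $K$-groups: for the conic $X=\bbP^1$ with $\cO_X(t)=\cO_{\bbP^1}(2t)$, we have $H^1(X,\cO(t))=0$ and $H^1(X,\Omega^1_X(t))=0$ for all $t\ge1$ by Serre duality (degrees are $2t$ and $2t-2$, both $\ge0$), and $R^+=R$ since $R$ is normal; hence $K_0(R_k)/K_0(k)=0$ and $K_{-m}(R_k)=0$ for $m>0$, matching the claimed formula (which has no $\Omega^{<0}$ terms). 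So $\tK_n(R_k)=0$ for $n\le0$, and the content is in $n\ge1$.

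Next I would compute, for each $n\ge1$, the three or four relevant eigenspaces. By Lemma \ref{lem:conic}, $\Omega^1_{R_k/k}$ is torsionfree, so by Kähler base change and the Künneth formula $\tors\Omega^i_{R_k}$ is concentrated in the weights coming from $\Omega^3_{R_k}\cong k$ (Remark \ref{rem:2-form}); concretely $\tors\Omega^i_{R_k}\cong\Omega^{i-3}_k$ for $i\ge3$ (via the $2$-form $\tau$ and its wedges with forms on $k$) and $\tors\Omega^i_{R_k}=0$ for $i\le2$ except the nilradical, which vanishes. Plugging into Theorem \ref{Kni}(b), $\tK_n^{(n)}(R_k)=\tors\Omega^{n-1}_{R_k}/d\tors\Omega^{n-2}_{R_k}$; using Lemma \ref{lem:Omega/dOmega} and the exactness of the augmented de Rham complex this collapses, and I expect $\tK_n^{(n)}(R_k)\cong\Omega^{n-3}_k$ for $n\ge3$ and $0$ for $n=1,2$. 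For the top two eigenspaces: Proposition \ref{K12} gives $K_n^{(n+2)}(R_k)\cong\oplus_t H^1(X,\Omega^{n+1}_X(t))$, and by Lemma \ref{lem:FundSeq} (Gauss–Manin, with $X$ a conic over $\Q$, so $\nabla$ vanishes — Example \ref{ex:partial=0}(b)) this is $\Omega^{n+1}_k\oo\oplus_t H^1(X,\cO(t))=0$; so $K_n^{(n+2)}(R_k)=0$. And $K_n^{(n+1)}(R_k)\cong\coker(\Omega^n_{R_k}/d\Omega^{n-1}_{R_k}\to\oplus_t H^0(X,\Omega^n_X(t)))$, which by the vanishing of $\nabla$ and Proposition \ref{Kunneth-Kni}(c) equals $\Omega^{n-1}_k\oo K_1^{(2)}(R)$; by Theorem \ref{nonvanish} (applicable since $d=2\ge 2g-1=-1$, $g=0$) we have $K_1^{(2)}(R)\cong k^{d+g-1}=k^{1}=k$ in degree $1$ and I would check that higher degrees vanish (since $\dim(\Omega^n_R)_t$ matches $\dim H^0(X,\Omega^n_X(t))$ for $t\ge2$, exactly as in the proof of Lemma \ref{lem:conic}), so $K_n^{(n+1)}(R_k)\cong\Omega^{n-1}_k$. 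Finally, the eigenspaces with $2\le i<n$ are handled by Theorem \ref{Kni}(a): $\tK_n^{(i)}(R_k)\cong\widetilde{HC}_{n-1}^{(i-1)}(R_k)$, which by Lemma \ref{lem:HCconic} is $\Omega^p_k$ with $p=2(i-1)-(n-1)-2=2i-n-3$, nonzero exactly when $2i-n-3\ge0$, i.e. $i\ge(n+3)/2$.

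Now I would just add up: the nonzero contributions are $\Omega^{n-1}_k$ from $i=n+1$, then $\Omega^p_k$ for $p=2i-n-3$ as $i$ ranges over $(n+3)/2 \le i \le n$, plus $\Omega^{n-3}_k$ from $i=n$ (which is the $p=n-3$ case, consistent). As $i$ decreases from $n+1$ to roughly $n/2$, the exponent $p$ runs through $n-1, n-3, n-5,\dots$ down to $0$ or $1$, each appearing once — giving exactly $\Omega^{n-1}_k\oplus\Omega^{n-3}_k\oplus\Omega^{n-5}_k\oplus\cdots$. The special cases $n=1$ ($\tK_1=K_1^{(2)}=k$, so $K_1(R_k)\cong K_1(k)\oplus k$) and $n=2$ ($\tK_2=\tK_2^{(3)}\cong\Omega^1_k\oo K_1^{(2)}(R)=\Omega^1_k$, while $\tK_2^{(2)}=\tors\Omega^1_R=0$ and $\tK_2^{(4)}=0$) I would state explicitly. \emph{The main obstacle} I anticipate is the bookkeeping that makes the ranges of $i$ in Theorem \ref{Kni}(a) (the $HC$ range $i<n$) dovetail exactly with the "$i=n, n+1$" formulas without overlap or gap — in particular verifying that the $i=n$ term from part (b) agrees with what the pattern predicts and that no weight is double-counted — together with confirming the degree-$1$ concentration of $K_1^{(2)}(R)$ for this specific $R$ so that there are no extra summands hiding in higher twists.
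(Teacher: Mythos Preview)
Your approach is essentially the paper's: decompose $\tK_n(R_k)$ into Adams eigenspaces and compute each of $i=n+2$, $i=n+1$, $i=n$, and $i<n$ separately using the conic-specific Lemmas \ref{lem:conic}, \ref{lem:HCconic} and Remark \ref{rem:2-form}, then add up. The bookkeeping you flag as the obstacle works out exactly as you predict, and the vanishing of $K_n^{(n+2)}$ and the identification $K_n^{(n+1)}\cong\Omega^{n-1}_k$ are handled just as the paper does (via Proposition \ref{K12}, Serre duality on $\bbP^1$, and the base-change Proposition \ref{Kunneth-Kni}(c) together with $K_1^{(2)}(R_\Q)\cong\Q$).

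One genuine slip to fix: your formula $\tors\Omega^i_{R_k}\cong\Omega^{i-3}_k$ is wrong. The torsion $2$-form $\tau$ contributes $\Omega^{i-2}_k$ (via $\omega\wedge\tau$) and $d\tau$ contributes $\Omega^{i-3}_k$, so in fact $\tors\Omega^i_{R_k}\cong\Omega^{i-2}_k\oplus\Omega^{i-3}_k$ for $i\ge2$. Your conclusion $\tK_n^{(n)}(R_k)\cong\Omega^{n-3}_k$ is nonetheless correct, because the differential kills the extra summand; but rather than tracking this directly, the paper (and it is cleaner) first computes over $\Q$ --- where $\tK_3^{(3)}(R_\Q)\cong\Q$ and $\tK_n^{(n)}(R_\Q)=0$ for $n\ne3$ follow immediately from Remark \ref{rem:2-form} --- and then invokes Proposition \ref{Kunneth-Kni}(b) to get $\tK_n^{(n)}(R_k)\cong\Omega^{n-3}_k$. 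Similarly, for $K_1^{(2)}$ the paper uses Proposition \ref{K02} and the Hilbert-function count already inside the proof of Lemma \ref{lem:conic} (which shows $(\Omega^1_{R/k})_t\to H^0(Y,\Omega^1_{Y/k})_t$ is an isomorphism for $t\ge2$), rather than Theorem \ref{nonvanish} plus a separate check; but these amount to the same thing.
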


\begin{proof}
By Proposition \ref{Kunneth-Kni}(a) and Lemma \ref{lem:HCconic},
we see that $\tK_n^{(n-j)}(R)$ is $\Omega^{n-2j-3}_k$ for all $j>0$.
By Theorem \ref{thm:Kbis} and Remark \ref{rem:2-form}, we have
$\tK_3^{(3)}(R_{\Q})\cong k$ and $\tK_n^{(n)}(R_{\Q})=0$ for $n\ne3$.
By Proposition \ref{Kunneth-Kni}(b) this implies that
$\tK_n^{(n)}(R_{k})\cong\Omega^{n-3}_k$ for all $n\ne3$.
By Proposition \ref{K02} and Lemma \ref{lem:conic},
we have $K_1^{(2)}(R_k)=k$. By Proposition \ref{Kunneth-Kni},
this implies that $K_n^{(n+1)}(R)\cong\Omega^{n-1}_k$
for all $n\ge1$. Finally, by Proposition \ref{K12}
we have $K_n^{(n+2)}(R_k)_t=H^1(X_k,\Omega_{X_k}^{n+1}(t))$,
which vanishes for all $n,t\ge1$ as it is the sum of
$\Omega^{n}_k\oo H^1(X,\Omega^1_X(t))$, which vanishes by Serre Duality,
and $\Omega^{n+1}_k\oo H^1(X,\cO_X(t))$, which vanishes as $X=\bbP^1$.
\end{proof}

\begin{subremark}
When $k$ is algebraic over $\Q$, the formulas in Theorem \ref{thm:conic}
reduce to: $\tK_n(R_k)=\Q$ for $n\ge1$ odd, and $\tK_n(R_k)=0$ otherwise.
\end{subremark}

\subsection*{Acknowledgements}
The authors would like to thank James Lewis for pointing out the
relation to the arithmetic Gauss-Manin connection.

\end{document}